\newcolumntype{^}{>{\currentrowstyle}}
\journal{Arxiv}
\newtheorem{lemma}{Lemma}
\newtheorem{theorem}{Theorem}
\newtheorem{corollary}{Corollary}
\newtheorem{proposition}{Proposition}
\newtheorem{problem}{Problem}
\newtheorem{example}{Example}
\newcommand{\F}{\mathbb{F}}
\begin{document}
	\renewcommand{\abstractname}{Abstract}
	\renewcommand{\refname}{References}
	\renewcommand{\tablename}{Table}
	\renewcommand{\arraystretch}{0.9}
	\newcommand\qbin[3]{\left[\begin{matrix} #1 \\ #2 \end{matrix} \right]_{#3}}
	
	\thispagestyle{empty}
	\sloppy
	
	\begin{frontmatter}
		\title{Divisible design graphs from symplectic graphs over rings with precisely three ideals}
		
		\author[01]{Anwita Bhowmik}
		\ead{bhowmikanwita@gmail.com}
		
		\author[02]{Sergey Goryainov}
		\ead{sergey.goryainov3@gmail.com}

		\address[01] {Postdoctoral Research Station of Mathematics, School of Mathematical Sciences, Hebei Normal University, Shijiazhuang 050024, P.R. China}
		
		\address[02] {School of Mathematical Sciences, Hebei International Joint Research Center for Mathematics and Interdisciplinary Science, Hebei Key Laboratory of Computational Mathematics and Applications, Hebei Workstation for Foreign Academicians,\\Hebei Normal University, Shijiazhuang  050024, P.R. China}

		\begin{abstract}
		In this paper we construct two new infinite families of divisible design graphs based on symplectic graphs over rings with precisely three ideals.
		\end{abstract}
		
		\begin{keyword}
			divisible design graph; symplectic graph; group divisible design; ring; ideal 
			\vspace{\baselineskip}
			\MSC[2020] 		05B05  \sep 51E30 \sep 13H99 
		\end{keyword}
		
	\end{frontmatter}
	\section{Introduction}
	A \emph{divisible design graph} (a DDG for short) with parameters $(v,k,\lambda_1,\lambda_2,m,n)$ is a $k$-regular graph on $v$ vertices such that its vertex set can be partitioned into $m$ classes of size $n$ with the following two properties: any two distinct vertices from the same class have precisely $\lambda_1$ common neighbours and any two vertices from different classes have precisely $\lambda_2$ common neighbours. The partition from the definition of a DDG is called the \emph{canonical partition}. DDGs were introduced in \cite{M08} and \cite{HKM11} as a bridge between graph theory and design theory (the adjacency matrix of a divisible design graph can be regarded as the incidence matrix of a group divisible design \cite{B77}) and have been studied in \cite{CH14}, \cite{GHKS19}, \cite{KS21}, \cite{S21}, \cite{K22}, \cite{CS22}, \cite{PS22}, \cite{P22}, \cite{T22} \cite{K23}, \cite{GK24} and \cite{DGHS24}. DDGs can be viewed as an important subclass of Deza graphs \cite{EFHHH99}. The bibliography on strictly Deza graphs and DDGs, and databases of small examples can be found in \cite{P}.

    The family of symplectic strongly regular graphs \cite[Section 2.5]{BV22} is an important example of strongly regular polar graphs. Note that the vertex set of a symplectic graph is the set of 1-dimensional subspaces of a $2e$-dimensional vector space over a finite field for some integer $e \ge 2$. In the last two decades, there have been studies of the analogues of the strongly regular polar graphs (including the analogues of symplectic polar graphs) with the vector space to be replaced by a module over a finite ring (see \cite{MP11}, \cite{LWG13}, \cite{MP13}, \cite{G13}, \cite{GWZ13}, \cite{GLW14}, \cite{LGW14} for Deza graphs that can be constructed in this way). We also refer to \cite{LW08} and \cite{GLW10} for constructions of Deza graphs (the latter one generalises the former one) based on symplectic graphs over finite fields.

    In this paper, we work with the same setting of a symplectic space over a ring as given in \cite{MP13} and consider a certain class of rings. 
    	Throughout this paper, let $K$ be a finite commutative ring with identity, having precisely three ideals: the zero ideal, $K$ itself and $J=\langle r\rangle$. Without loss of generality we may assume that $|K|=q^2$, $|J|=q$ and $K/J \cong \mathbb{F}_q$, where $q$ is a prime power and $\mathbb{F}_q$ is the finite field of order $q$ (see Section \ref{sec:Prelim} for a background on rings). 
        Let $K^\times$ denote the set of units in $K$. Let $e\geq 2$ be an integer. Let $V'=\{(a_1,a_2,\ldots, a_{2e}): a_1,a_2,\ldots,a_{2e}\in K \text{ and } a_j\in K^\times\text{ for some }j\in \{1,2,\ldots, 2e\}\}$. We define an equivalence relation $\sim$ on $V'$ as: 
	$(a_1,a_2,\ldots,a_{2e})\sim (b_1,b_2,\ldots,b_{2e})$ if and only if there exists $\lambda\in K^\times$ such that $a_j=\lambda b_j, 1\leq j\leq 2e$. Let $V$ denote the set of equivalence classes on $V'$ corresponding to the equivalence relation $\sim$. We denote by $[a_1,a_2,\ldots, a_{2e}]$ the equivalence class of $(a_1,a_2,\ldots,a_{2e})$. Let $M$ be the $2e\times 2e$ non-singular matrix with entries in $K$, given by 
	\begin{align*}
		M=\begin{bmatrix}
			0 & I_e\\
			-I_e & 0\\
		\end{bmatrix}	
	\end{align*}
	where $I_e$ is the $e\times e$ identity matrix. 
    
    We define two graphs $X(2e,K)$ and $Y(2e,K)$, both with vertex set $V$:
    \begin{itemize}
    \item in the graph $X(2e,K)$, a vertex $[a_1,a_2,\ldots,a_{2e}]$ is adjacent to a vertex $[b_1,b_2,\ldots,b_{2e}]$ if and only if $(a_1,a_2,\ldots,a_{2e})M(b_1,b_2,\ldots,b_{2e})^t$ belongs to $K \setminus \{0\}$;
    \item 
    in the graph $Y(2e,K)$, a vertex $[a_1,a_2,\ldots,a_{2e}]$ is adjacent to a vertex $[b_1,b_2,\ldots,b_{2e}]$ if and only if $(a_1,a_2,\ldots,a_{2e})M(b_1,b_2,\ldots,b_{2e})^t$ belongs to $J \setminus \{0\}$. 
    \end{itemize}
    Clearly, $Y(2e,K)$ is a spanning subgraph of $X(2e,K)$.

\medskip
    The main results of this paper are the following two theorems.
	\begin{theorem}\label{thm:Main1}Let $e\geq 2$ be an integer.
		Let $K$ be a finite commutative ring with identity having precisely three ideals: $\langle 0\rangle$, $J$ and $K$. Let $K/J\cong \mathbb{F}_q$, where $\mathbb{F}_q$ is a finite field of order $q$ for a prime power $q$. Then the graph $X(2e,K)$ is a divisible design graph with parameters $(v,k,\lambda_1,\lambda_2,m,n)$ where
		\begin{align*}
			v&=\dfrac{q^{2e-1}(q^{2e}-1)}{q-1},
			~~k=q^{4e-2}+q^{4e-3}-q^{2e-2},
			~~\lambda_1=q^{4e-2}+q^{4e-3}-q^{4e-4}-q^{2e-2},\\
			\lambda_2&=q^{4e-2}+q^{4e-3}-q^{4e-4}-q^{4e-5}-q^{2e-2}+q^{2e-3},~~
			m=\dfrac{q^{2e}-1}{q-1},~~
			n=q^{2e-1}.
		\end{align*}
	\end{theorem}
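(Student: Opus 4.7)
The strategy is to recognise the canonical partition via the natural reduction to $\PG(2e-1,q)$ and then carry out all counts by lifting linear-algebra arguments from $\mathbb{F}_q$ to $K$, using that $K$ is a finite local ring whose maximal ideal $J=\langle r\rangle$ satisfies $J^2=0$; in particular, multiplication by $r$ gives an additive isomorphism $K/J\to J$, and for $a\in K^{2e}$, $y\in J^{2e}$ one has $\omega(a,y)=r\,\bar\omega(\bar a,\tilde y)$, where tildes and bars denote the natural identifications with $\mathbb{F}_q^{2e}$ (here $\omega$ is the symplectic form defined by $M$). First I would establish $|V'|=q^{4e}-q^{2e}$ and $|K^\times|=q^2-q$, giving $v$, and verify that componentwise reduction $\pi\colon V\to\PG(2e-1,q)$ is well-defined with fibres of size $q^{2e-1}$; this fibration will serve as the canonical partition, yielding $m$ and $n$ at once.

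For $k$, I note that since $a\in V'$ has a unit coordinate, so does $aM$, hence $\omega(a,\cdot)\colon K^{2e}\to K$ is $K$-linear and surjective with fibres of size $q^{4e-2}$. Thus $q^{4e}-q^{4e-2}$ tuples $b\in K^{2e}$ satisfy $\omega(a,b)\neq 0$, of which exactly $q^{2e-1}(q-1)$ lie in $J^{2e}$ (by the $\mathbb{F}_q$-counterpart); dividing the difference by $|K^\times|=q(q-1)$ yields the claimed $k$. For the two common-neighbour counts I split on whether distinct vertices $[a],[b]$ belong to different classes or to the same class. If they lie in different classes, then $\bar a,\bar b$ are $\mathbb{F}_q$-linearly independent, the $K$-linear map $x\mapsto(\omega(a,x),\omega(b,x))$ is surjective onto $K^2$ with fibres of size $q^{4e-4}$ (by Nakayama applied to the $\mathbb{F}_q$-reduction), giving $(q^2-1)^2 q^{4e-4}$ tuples $x\in K^{2e}$ with both values nonzero; subtracting the analogous $J^{2e}$-contribution $(q-1)^2q^{2e-2}$ and dividing by $|K^\times|$ produces $\lambda_2$. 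If they lie in the same class I rescale to assume $\bar a=\bar b$, so $b=a+rc$ with $\bar a,\bar c$ linearly independent (forced by $[a]\neq[b]$); then $\omega(b,x)=\omega(a,x)+r\bar\omega(\bar c,\bar x)$, and I would analyse the mixed map $\phi(x)=(\omega(a,x),\bar\omega(\bar c,\bar x))\in K\times\mathbb{F}_q$, showing each fibre has size $q^{4e-3}$. Writing $u=\omega(a,x)$ and $s=\bar\omega(\bar c,\bar x)$, the joint adjacency constraint becomes $u\neq 0$ and $u+rs\neq 0$; I split on whether $u$ is a unit, a nonzero element of $J$, or zero to enumerate the valid $(u,s)$, then subtract the $J^{2e}$ contribution (where $\omega(a,x)=\omega(b,x)$, so the two conditions collapse to a single $\mathbb{F}_q$-symplectic non-vanishing) and divide by $|K^\times|$ to obtain $\lambda_1$.

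The main obstacle is the same-class case: here the pair $(\omega(a,x),\omega(b,x))$ is \emph{not} the image of a $K$-linear map onto $K^2$, because $\omega(b,x)-\omega(a,x)$ lies in $J$ and depends only on $\bar x$; the mixed-target map $\phi$ and the trichotomy on $u$ are precisely what is needed to track when $u+rs$ vanishes. Once this bookkeeping is correctly set up, the arithmetic collapses to the claimed formula for $\lambda_1$, and regularity with exactly two intersection numbers on the stated partition is enough to conclude that $X(2e,K)$ is a divisible design graph with the given parameters.
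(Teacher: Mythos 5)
Your proposal is correct --- I checked that each of your counts reproduces the stated $v,k,\lambda_1,\lambda_2,m,n$ --- but it takes a genuinely different route from the paper. You realise the partition as the fibres of coordinatewise reduction modulo $J$, i.e.\ of the natural map onto $\PG(2e-1,q)$, whereas the paper constructs classes $C(H,u)=\{[u+h]:h\in H\}$ from hyperplanes $H$ of $T=J^{2e}$ and proves these form a partition (Lemmas \ref{lem:pu} and \ref{lem:class}, Proposition \ref{prop:CHu}); the two partitions coincide, since by Lemma \ref{lem:cosetsOfT} each $C(H,u)$ is the union of the cosets $z_ju+T$, $j\in\{2,\ldots,q\}$, which is exactly one fibre of your reduction, so your description is the more explicit one and speaks to the Problem posed in Section \ref{sec:Remarks}. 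For the intersection numbers you count directly in $X(2e,K)$: the maps $x\mapsto\omega(a,x)$, $x\mapsto(\omega(a,x),\omega(b,x))$ (different classes, $\bar a,\bar b$ independent) and the mixed map $x\mapsto(\omega(a,x),\bar\omega(\bar c,\bar x))$ (same class, $b=a+rc$) are surjective homomorphisms with constant fibre sizes $q^{4e-2}$, $q^{4e-4}$, $q^{4e-3}$, surjectivity coming from Nakayama together with $J^2=0$; you then discard the non-unimodular solutions, which all lie in $J^{2e}$ and are counted via the induced $\mathbb{F}_q$-symplectic form, and divide by $|K^\times|=q(q-1)$. Your trichotomy on $u$ for the condition $u\neq 0$, $u+rs\neq 0$ gives $(q-1)(q^2+q-1)$ admissible pairs $(u,s)$ and hence exactly the stated $\lambda_1$, and the different-class count gives $\lambda_2$. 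The paper instead works in the complement $\overline{X(2e,K)}$, writes elements as $z_{j_1}+z_{j_2}r$ and eliminates coordinates by hand (Propositions \ref{propmain1} and \ref{propmain2}), which forces the $-2$ corrections for adjacent pairs and a final complementation step; your route avoids both (since $\omega(a,a)=0$, the tuples representing $[a]$ and $[b]$ never satisfy your conditions) and transfers essentially verbatim to $Y(2e,K)$, while the paper's coordinate computations are more elementary and self-contained. The only items you still need to write out in full are the surjectivity and fibre-size claims (in particular for the mixed map onto $K\times\mathbb{F}_q$) and the observation that $[a]\neq[b]$ within a class forces $\bar a,\bar c$ to be independent; both are routine.
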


	\begin{theorem}\label{thm:Main2}
    Let $e\geq 2$ be an integer.
		Let $K$ be a finite commutative ring with identity having precisely three ideals: $\langle 0\rangle$, $J$ and $K$. Let $K/J\cong \mathbb{F}_q$, where $\mathbb{F}_q$ is a finite field of order $q$ for a prime power $q$. Then the graph $Y(2e,K)$ is a divisible design graph with parameters $(v,k,\lambda_1,\lambda_2,m,n)$ where
		\begin{align*}
			v&=\dfrac{q^{2e-1}(q^{2e}-1)}{q-1},
			~~k=q^{4e-3}-q^{2e-2},
			~~\lambda_1=q^{4e-3}-q^{4e-4}-q^{2e-2},\\
			\lambda_2&=q^{4e-4}-q^{4e-5}-q^{2e-2}+q^{2e-3},
			~~m=\dfrac{q^{2e}-1}{q-1},
			~~n=q^{2e-1}.
		\end{align*}
	\end{theorem}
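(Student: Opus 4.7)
\textbf{Proof plan for Theorem \ref{thm:Main2}.} The plan is to exploit the local structure of $K$. Because $K$ has exactly three ideals $\{0\},J,K$, the ideal $J$ is the unique maximal ideal; also $J^2$ is an ideal strictly contained in $K$, so $J^2\in\{0,J\}$, and $J^2=J$ is excluded by Nakayama's lemma. Hence $J^2=0$, and the multiplication map $K\to J$, $x\mapsto rx$, has kernel $J$, inducing an $\mathbb{F}_q$-linear isomorphism $\psi\colon J\xrightarrow{\sim}K/J=\mathbb{F}_q$; extend $\psi$ componentwise to $J^{2e}\xrightarrow{\sim}\mathbb{F}_q^{2e}$. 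Writing $\bar a\in\mathbb{F}_q^{2e}$ for the componentwise reduction modulo $J$ of a representative $a$, the coordinate-wise reduction yields a well-defined map $\pi\colon V\to\PG(2e-1,q)$, $[a]\mapsto[\bar a]$. A short count shows that every fibre of $\pi$ has size $n=q^{2e-1}$, and I would take the fibres of $\pi$ as the canonical partition.

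Setting $\phi_a(c):=aMc^t$, the adjacency in $Y(2e,K)$ is $\phi_a(c)\in J\setminus\{0\}$, equivalent to the symplectic orthogonality $\bar aM\bar c^t=0$ in $\mathbb{F}_q$ together with the non-vanishing $\phi_a(c)\ne 0$ in $K$. The crucial uniformity statement is that $\phi_a|_{J^{2e}}\colon J^{2e}\to J$ corresponds under $\psi$ to $\bar j\mapsto \bar aM\bar j^{\,t}$, which is surjective with kernel of size $q^{2e-1}$; consequently, for each $\bar c\in\bar a^{\perp}\setminus\{0\}$ the $q^{2e}$ lifts of $\bar c$ split equally among the $q$ possible values of $\phi_a(c)\in J$, with each value attained $q^{2e-1}$ times. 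Summing $(q-1)q^{2e-1}$ good lifts over the $q^{2e-1}-1$ admissible $\bar c$ and dividing by $|K^{\times}|=q(q-1)$ yields regularity with degree $k=q^{4e-3}-q^{2e-2}$, independently of $[a]$.

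For $\lambda_1$, pick $[a]\ne[b]$ with $\pi([a])=\pi([b])$; after rescaling one may write $b=a+j_0$ with $j_0\in J^{2e}$ and $\psi(j_0)\notin\mathbb{F}_q\bar a$. A common neighbour $[c]$ must satisfy $\bar c\in\bar a^{\perp}\setminus\{0\}$, $\phi_a(c)\ne 0$ and $\phi_a(c)+\phi_{j_0}(c)\ne 0$; since $J^2=0$, the quantity $\phi_{j_0}(c)$ depends only on $\bar c$ and equals $\psi^{-1}(\psi(j_0)M\bar c^{\,t})$. Splitting $\bar c\in\bar a^{\perp}\setminus\{0\}$ according to whether $\psi(j_0)M\bar c^{\,t}$ vanishes — the vanishing locus $\bar a^{\perp}\cap\psi(j_0)^{\perp}$ having dimension $2e-2$ by the linear independence of $\bar a$ and $\psi(j_0)$ — yields $q^{2e-2}-1$ admissible vectors contributing $(q-1)q^{2e-1}$ good lifts each, and $q^{2e-1}-q^{2e-2}$ admissible vectors contributing $(q-2)q^{2e-1}$ good lifts each; division by $q(q-1)$ gives the stated $\lambda_1$. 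For $\lambda_2$, $\bar a$ and $\bar b$ are linearly independent, so $\bar c\in\bar a^{\perp}\cap\bar b^{\perp}\setminus\{0\}$ has $q^{2e-2}-1$ choices; the joint map $J^{2e}\to J\oplus J$, $j\mapsto(\phi_a(j),\phi_b(j))$, is surjective with kernel of size $q^{2e-2}$, so each admissible $\bar c$ contributes $(q-1)^2 q^{2e-2}$ good lifts, and dividing by $q(q-1)$ produces the claimed $\lambda_2$.

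The main obstacle is the case split in the $\lambda_1$ count: one must recognise that, when $\pi([a])=\pi([b])$, the two symplectic orthogonality constraints merge into the single condition $\bar c\in\bar a^{\perp}$, while the ``first order'' information carried by $\psi(j_0)\in\mathbb{F}_q^{2e}$ still controls whether the two non-vanishing conditions $\phi_a(c)\ne 0$ and $\phi_b(c)\ne 0$ exclude the same or two distinct values of $\phi_a(c)$ in $J$. The identity $J^2=0$ is the decisive enabler throughout: it makes $\phi_{j_0}(c)$ depend only on $\bar c$ and reduces $\phi_a|_{J^{2e}}$, under $\psi$, to the usual symplectic pairing over $\mathbb{F}_q$, after which the remaining computations are routine linear-algebra bookkeeping that produces the parameters of the theorem.
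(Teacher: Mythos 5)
Your proposal is correct, and it reaches all five counts ($v$, $k$, $m,n$, $\lambda_1$, $\lambda_2$) by a genuinely different, more conceptual route than the paper. The paper builds the canonical partition implicitly: classes $C(H,u)=\{[u+h]:h\in H\}$ for hyperplanes $H$ of $T=J^{2e}$, justified through Lemmas \ref{lem:pu}, \ref{lem:class} and Proposition \ref{prop:CHu}, and then computes $k$, $\lambda_1$, $\lambda_2$ (Propositions \ref{prop3}, \ref{prop:YTwoVerticesFromTheSameClass}, \ref{prop:YTwoVerticesFromDifferentClasses}) by brute-force coordinates, writing $a_j=c_j+d_jr$, $x_j=s_j+t_jr$ and solving linear systems modulo $J$. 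You instead use $J^2=0$ (the paper's Lemma \ref{lemij=0}; your Nakayama argument is a valid alternative) to set up the isomorphism $J\cong K/J$ and the reduction $\pi\colon V\to \mathrm{PG}(2e-1,q)$, take the fibres of $\pi$ as the partition, and replace the coordinate systems by two structural facts: the functional $j\mapsto aMj^t$ on $J^{2e}$ is the symplectic functional of $\bar a$ over $\mathbb{F}_q$ (uniform fibres of size $q^{2e-1}$), and for independent $\bar a,\bar b$ the joint map to $J\oplus J$ is surjective with kernel of size $q^{2e-2}$. Your case split for $\lambda_1$ (whether $\psi(j_0)M\bar c^{\,t}$ vanishes, i.e.\ $\bar c\in\bar a^\perp\cap\psi(j_0)^\perp$ or not) is exactly the paper's Case 1/Case 2 ($y=z$ versus $y\neq z$), and your intermediate counts $(q^{2e-2}-1)q^{2e-2}$ and $q^{4e-4}(q-2)$ agree with theirs; likewise your $\lambda_2$ count reproduces $q^{2e-3}(q-1)(q^{2e-2}-1)$. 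Your partition is in fact the same as the paper's, since by Lemma \ref{lem:cosetsOfT} each class $C(H,u)$ is, on the level of tuples, a union of cosets of $T$ with representatives the nonzero scalar multiples of $u$, i.e.\ precisely a fibre of $\pi$; so your description also makes the partition explicit, which speaks to the problem raised in the paper's concluding remarks. What the paper's longer coordinate treatment buys is self-containedness (everything is reduced to counting solutions of explicit congruences, with the rank-$2e-2$ verification spelled out); what your route buys is brevity, a transparent reason why the parameters only depend on $q$ and $e$, and an explicit canonical partition. The only points to spell out in a full write-up are the routine verifications you compressed: well-definedness of $\pi$ and the fibre size $q^{2e-1}$, closure of all the tuple counts under unit scaling before dividing by $|K^\times|=q(q-1)$, and the observation that $[a]$, $[b]$ themselves are never common neighbours in $Y(2e,K)$ (since $aMa^t=0$), so no correction term of the kind appearing in the paper's complement computations for $X(2e,K)$ is needed.
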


\begin{example}
Let $p$ be a positive integer that is a prime. Then the ring  $K_1 = \mathbb{Z}/p^2\mathbb{Z}$ has precisely three ideals, where $|K_1| = p^2$ and the non-trivial ideal $J$ is given by the elements divisible by $p$ (if we represent $K_1$ as the set of remainders modulo $p^2$).        
\end{example}

\begin{example}
Let $q$ be a positive integer that is a prime power and $\mathbb{F}_q[x]$ be the ring of polynomials of finite degree over the finite field $\mathbb{F}_q$. Then the quotient ring $K_2 = \mathbb{F}_q[x]/ \langle x^2\rangle$ has precisely three ideals, where $|K_2| = q^2$ and the non-trivial ideal $J$ is given by all the elements divisible by $x$ (if we represent $K_2$ as the set of remainders modulo $x^2$).     
\end{example}

Note that if $q = p$, then the rings $K_1$ and $K_2$ have the same order, but are not isomorphic (as they have different characteristic); in this case the graphs $X(2e,K_1)$ and $X(2e,K_2)$ have the same parameters and the graphs $Y(2e,K_1)$ and $Y(2e,K_2)$ also have the same parameters. We have verified computationally that for $q = p = 2$ the graphs $X(4,K_1)$ and  $X(4,K_2)$ are not isomorphic and the graphs $Y(4,K_1)$ and  $Y(4,K_2)$ are also not isomorphic.

We do not know any example of a finite commutative ring with identity, having precisely three ideals and being not isomorphic to $K_1$ or $K_2$. In particular, the classification of small local rings \cite{N18} does not give such examples.
    
The paper is organised as follows. In Section \ref{sec:Prelim} we give some preliminary definitions and results. In section \ref{sec:Proofs}, we give proofs of Theorem \ref{thm:Main1} and Theorem \ref{thm:Main2} in a series of lemmas and propositions. In Section \ref{sec:Remarks}, we give some concluding remarks.
	\section{Preliminaries}\label{sec:Prelim}
In this section, we give preliminary definitions and results from ring theory.
    
	A \emph{ring} $R$ is a set together with two binary operations $+$ and $\cdot$ (called \emph{addition} and \emph{multiplication}) satisfying the following axioms:
	\begin{itemize}
		\item $(R,+)$ is an abelian group (with additive identity denoted by $0$),
		\item the operation $\cdot$ is associative: $(a\cdot b)\cdot c=a\cdot (b\cdot c)$ for all $a,b,c\in R$, and
		\item the distributive laws hold in $R$: for all $a,b,c\in R$,
		\begin{align*}
			(a+b)\cdot c=a\cdot c+b\cdot c\text{ and }a\cdot (b+c)=a\cdot b+a\cdot c.
		\end{align*}
	\end{itemize}
	We write $ab$ instead of $a\cdot b$ as shorthand.
	
	A ring $R$ is called \emph{commutative} if multiplication is commutative, that is, $ab=ba$ for all $a,b\in R$.
	
	A ring $R$ is said to have an \emph{identity} if there is an element $1\in R$ with $1\cdot a=a\cdot 1=a$ for all $a\in R$.

    Let $R$ be a ring with identity $1\neq 0$. An element $a$ of $R$ is called a \emph{unit} (or $a$ is said to be \emph{invertible}) in $R$ if there is some $b$ in $R$ such that $ab=ba=1$. The set of units in $R$ is denoted by $R^\times$.
	
	A nonzero element $a\in R$ is called a \emph{zero divisor} if there is a nonzero element $b\in R$ such that either $ab=0$ or $ba=0$.

A \emph{subring} of the ring $R$ is a subset of $R$ that is itself a ring when binary operations of addition and multiplication on $R$ are restricted to the subset.

	Let $R$ and $S$ be rings. A \emph{ring homomorphism} is a map $\phi:R\rightarrow S$ satisfying
	\begin{itemize}
		\item $\phi(a+b)=\phi(a)+\phi(b)$ and
		\item $\phi(ab)=\phi(a)\phi(b)$
	\end{itemize} 
for all $a,b\in R$. A bijective ring homomorphism is called an \emph{isomorphism}.

\begin{proposition}[{\cite[Exercise 17, p. 231]{DF04}}]
	Let $R$ and $S$ be rings. Then the direct product $R\times S$ is a ring under componentwise addition and multiplication.
\end{proposition}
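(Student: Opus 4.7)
The plan is to verify the ring axioms for $R\times S$ one by one, in each case reducing the required identity to the corresponding identity holding componentwise in $R$ and $S$. Concretely, I would define componentwise operations by $(r_1,s_1)+(r_2,s_2):=(r_1+r_2,s_1+s_2)$ and $(r_1,s_1)\cdot(r_2,s_2):=(r_1r_2,s_1s_2)$ for $(r_i,s_i)\in R\times S$, and then check each axiom in the definition of a ring given earlier in the excerpt.

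First I would handle the additive structure. Commutativity and associativity of $+$ on $R\times S$ follow immediately from the fact that $(R,+)$ and $(S,+)$ are abelian groups, because both coordinates can be manipulated independently. The pair $(0_R,0_S)$ serves as an additive identity, since $(r,s)+(0_R,0_S)=(r+0_R,s+0_S)=(r,s)$, and every element $(r,s)$ has additive inverse $(-r,-s)$. This shows that $(R\times S,+)$ is an abelian group.

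Next I would verify associativity of the product: for any three elements $(r_i,s_i)\in R\times S$,
\begin{align*}
\bigl((r_1,s_1)(r_2,s_2)\bigr)(r_3,s_3)&=(r_1r_2,s_1s_2)(r_3,s_3)=\bigl((r_1r_2)r_3,(s_1s_2)s_3\bigr)\\
&=\bigl(r_1(r_2r_3),s_1(s_2s_3)\bigr)=(r_1,s_1)\bigl((r_2,s_2)(r_3,s_3)\bigr),
\end{align*}
using associativity of multiplication in $R$ and in $S$. The two distributive laws are checked in exactly the same way, by expanding both sides of $(x+y)z=xz+yz$ and $x(y+z)=xy+xz$ componentwise and invoking the distributive laws in each of $R$ and $S$.

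There is no real obstacle here; the proof is entirely bookkeeping. The only thing that requires care is to be explicit about which axiom in $R$ or $S$ is being used at each step so that the componentwise reduction is unambiguous. I would therefore present the verification as a short table of calculations rather than elaborate prose, and conclude that $R\times S$ with these operations satisfies every axiom in the definition of a ring.
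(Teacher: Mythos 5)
Your verification is correct and is exactly the routine componentwise check of the ring axioms that the cited source (Dummit--Foote) intends; the paper itself gives no proof, simply citing the exercise. Nothing is missing, since associativity, the abelian group structure under addition, and both distributive laws all reduce coordinatewise precisely as you describe.
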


Now, let $R$ be a commutative ring with identity.

The ring $R$ is called an \emph{integral domain} if $1\neq 0$ and $R$ has no zero divisors.
	
A subset $I$ of $R$ is called an \emph{ideal} of $R$ if $I$ is a subring of $R$ and $I$ is closed under multiplication by elements of $R$, that is, $aI\subseteq I$ for all $a\in R$.

Let $A$ be a subset of $R$. Let $\langle A\rangle$ denote the smallest ideal of $R$ containing $A$; this ideal is called the \emph{ideal generated by $A$}.

An ideal $I$ of $R$ is called a \emph{principal ideal} if $I$ is generated by a single element, that is, if there exists $a\in R$ such that $I=\langle a\rangle=\{ab:b\in R\}$.

    Let $a,b\in R$. We say that \emph{$a$ divides $b$} if there exists $c\in R$ such that $b=ac$. Note that $a$ divides $b$ if and only if $\langle b\rangle\subseteq\langle a\rangle$.

Let $I$ be an ideal of $R$. Then the (additive) \emph{quotient ring of $R$ by $I$}, denoted by $R/I$, is the ring under the binary operations:
	$$(a+I)+(b+I):=(a+b)+I \text{ and } (a+I)\cdot (b+I) :=ab+I$$ for all $a,b\in R$.

\begin{theorem}[{\cite[Theorem 8, p. 246]{DF04}}]
	Let $R$ be a commutative ring with identity, and let $I$ be an ideal of $R$. The correspondence $A\xleftrightarrow{}A/I$ is an inclusion preserving bijection between the set of subrings $A$ of $R$ that contain $I$ and the set of subrings of $R/I$. Furthermore, $A$ (a subring containing $I$) is an ideal of $R$ if and only if $A/I$ is an ideal of $R/I$.
\end{theorem}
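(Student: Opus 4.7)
The plan is to work through the natural projection $\pi: R \to R/I$ defined by $\pi(r) = r + I$, which is a surjective ring homomorphism with kernel $I$. I would define $\Phi(A) = A/I = \pi(A)$ for each subring $A$ with $I \subseteq A$, and $\Psi(B) = \pi^{-1}(B)$ for each subring $B$ of $R/I$. Both maps are well defined: the image and the preimage of a subring under a ring homomorphism are subrings, and since every subring of $R/I$ contains $0 + I = \pi(I)$, the preimage $\Psi(B)$ automatically contains $I$.

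Next I would verify that $\Phi$ and $\Psi$ are mutually inverse. The identity $\Phi(\Psi(B)) = B$ is immediate from the surjectivity of $\pi$. For $\Psi(\Phi(A)) = A$, the inclusion $A \subseteq \pi^{-1}(\pi(A))$ is automatic; for the reverse inclusion, given $r \in \pi^{-1}(\pi(A))$ there exists $a \in A$ with $r + I = a + I$, so $r - a \in I \subseteq A$, and hence $r \in A$. Inclusion preservation then follows at once from the definitions of $\Phi$ and $\Psi$.

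For the ideal correspondence, I would argue in both directions explicitly using the formula $(r+I)(a+I) = ra + I$. If $A$ is an ideal of $R$, then for any $r + I \in R/I$ and $a + I \in A/I$ we have $ra \in A$, so $(r+I)(a+I) \in A/I$, showing $A/I$ is an ideal. Conversely, if $A/I$ is an ideal and $r \in R$, $a \in A$, then $ra + I = (r+I)(a+I) \in A/I$, so $ra + I = a' + I$ for some $a' \in A$; then $ra - a' \in I \subseteq A$ yields $ra \in A$.

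The whole argument is a direct verification and I do not foresee any genuine obstacle. The point that is easiest to undervalue is the repeated use of the hypothesis $I \subseteq A$: it is exactly what makes $\Psi \circ \Phi = \mathrm{id}$ valid and what powers the ``only if'' direction of the ideal statement. Without this hypothesis, two distinct subrings could have the same image in $R/I$ and the pullback step would fail, so I would be careful to flag this hypothesis at each place it is used.
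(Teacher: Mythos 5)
Your proof is correct: the paper itself gives no argument for this statement (it is quoted directly from Dummit--Foote), and your verification via the projection $\pi\colon R\to R/I$, the mutually inverse maps $A\mapsto\pi(A)$ and $B\mapsto\pi^{-1}(B)$, and the two-way check of the ideal condition is exactly the standard textbook proof being cited. Your emphasis on where the hypothesis $I\subseteq A$ is used is well placed and nothing further is needed.
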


An ideal $\mathfrak{m}$ in $R$ is called a \emph{maximal ideal} if $\mathfrak{m}\neq R$ and the only ideals containing $\mathfrak{m}$ are $\mathfrak{m}$ and $R$.

\begin{proposition}[{\cite[Proposition 12, p. 254]{DF04}}]
	Let $R$ be a commutative ring with identity. The ideal $\mathfrak{m}$ is a maximal ideal if and only if the quotient ring $R/\mathfrak{m}$ is a field.
\end{proposition}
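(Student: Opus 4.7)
The plan is to derive both implications from the correspondence theorem (the cited Theorem 8 of [DF04]) just stated, which provides an inclusion-preserving bijection between the ideals of $R$ containing $\mathfrak{m}$ and the ideals of $R/\mathfrak{m}$, with $\mathfrak{m}$ corresponding to the zero ideal and $R$ corresponding to all of $R/\mathfrak{m}$.

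As a preliminary step I would record the following folklore lemma: a commutative ring $S$ with identity $1 \neq 0$ is a field if and only if its only ideals are $\{0\}$ and $S$. The forward direction is immediate, since in a field every nonzero element is a unit and therefore any nonzero ideal contains $1$ and coincides with $S$. For the converse, given a nonzero $a \in S$, the principal ideal $\langle a \rangle$ is nonzero and hence must equal $S$, so $1 = ab$ for some $b \in S$, exhibiting an inverse of $a$.

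Combining these two ingredients, the proposition will follow at once. If $\mathfrak{m}$ is maximal, then by definition $\mathfrak{m} \neq R$ and the only ideals of $R$ containing $\mathfrak{m}$ are $\mathfrak{m}$ and $R$; by the correspondence, $R/\mathfrak{m}$ has exactly the two ideals $\{0\}$ and $R/\mathfrak{m}$, and by the preliminary lemma it is a field. Conversely, if $R/\mathfrak{m}$ is a field, then it is nonzero and has exactly two ideals, so pulling them back via the correspondence shows that $\mathfrak{m} \neq R$ and the only ideals of $R$ containing $\mathfrak{m}$ are $\mathfrak{m}$ and $R$, which is precisely maximality.

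I do not anticipate any real obstacle; the only point requiring a little care is the degenerate case $\mathfrak{m} = R$, which is ruled out on both sides by the conventions that a field has $1 \neq 0$ and a maximal ideal is proper. Everything else is essentially a reformulation of the correspondence theorem.
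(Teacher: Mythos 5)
Your argument is correct; the paper itself gives no proof of this proposition, citing it directly from Dummit--Foote, and your route via the correspondence theorem together with the observation that a commutative ring with $1 \neq 0$ is a field precisely when its only ideals are $\{0\}$ and the whole ring is exactly the standard argument of the cited source. Nothing further is needed.
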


The ring $R$ is called a \emph{local ring} if it has a unique maximal ideal.

\begin{proposition}[{\cite[Exercise 37, p. 259]{DF04}}]
	Let $R$ be a commutative ring with identity. If $R$ is a local ring with maximal ideal $\mathfrak{m}$, then every element of $R\setminus\mathfrak{m}$ is a unit. Thus, $R\setminus\mathfrak{m}$ equals the set of units in $R$.
\end{proposition}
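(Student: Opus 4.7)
The plan is to establish the two set inclusions $R^\times \subseteq R \setminus \mathfrak{m}$ and $R \setminus \mathfrak{m} \subseteq R^\times$ separately; together they give the claimed equality, and the first statement of the proposition is simply the latter inclusion.

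For the easy direction I would argue that no unit can lie in $\mathfrak{m}$: if $u \in \mathfrak{m}$ were a unit, then for any $r \in R$ we could write $r = (r u^{-1}) u$, and since $\mathfrak{m}$ absorbs multiplication by elements of $R$, this forces $r \in \mathfrak{m}$, hence $\mathfrak{m} = R$. This contradicts the defining requirement $\mathfrak{m} \neq R$ of a maximal ideal. Therefore $R^\times \subseteq R \setminus \mathfrak{m}$.

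For the main direction, I would take an arbitrary $a \in R \setminus \mathfrak{m}$ and show that the principal ideal $\langle a \rangle$ equals $R$. Assume toward a contradiction that $\langle a \rangle$ is a proper ideal. I would then invoke the standard fact that every proper ideal of a commutative ring with identity is contained in some maximal ideal. Since $R$ is local, the only maximal ideal available is $\mathfrak{m}$, so $\langle a \rangle \subseteq \mathfrak{m}$, which gives $a \in \mathfrak{m}$ and contradicts the choice of $a$. Hence $\langle a \rangle = R$, so $1 \in \langle a \rangle$, which yields some $b \in R$ with $ab = 1$; that is, $a$ is a unit.

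The one delicate step is the invocation of a maximal ideal containing $\langle a \rangle$. In full generality this relies on Zorn's Lemma applied to the poset of proper ideals containing $\langle a \rangle$, ordered by inclusion. In the finite-ring setting which is the actual concern of this paper, the argument reduces to simply choosing an ideal of largest cardinality among the proper ideals that contain $\langle a \rangle$; maximality of the chosen ideal is then immediate from finiteness, and the proof becomes entirely elementary.
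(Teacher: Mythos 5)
Your proof is correct. The paper does not prove this proposition at all — it simply cites it to Dummit--Foote — so there is no internal argument to compare against; your two-inclusion argument (units cannot lie in the proper ideal $\mathfrak{m}$, and any $a \in R \setminus \mathfrak{m}$ generates an ideal that, if proper, would sit inside the unique maximal ideal $\mathfrak{m}$, a contradiction) is exactly the standard textbook proof, and your closing remark correctly flags that the appeal to ``every proper ideal lies in a maximal ideal'' needs Zorn's Lemma in general but is elementary by a largest-cardinality choice in the finite setting actually used in this paper.
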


	The ring $R$ is called a \emph{principal ideal ring} if every ideal of $R$ is principal.
	
	The ring $R$ is called a \emph{principal ideal domain (PID)} if it is an integral domain which is a principal ideal ring.

     An ideal $P$ of $R$ is called a \emph{prime ideal} if $P\neq R$ and whenever the product $ab$ of two elements $a,b\in R$ is an element of $P$, then at least
one of $a$ and $b$ is an element of $P$. 

	Let $R$ be an integral domain. For an element $a\in R$ such that $a$ is nonzero and a non-unit,
	\begin{itemize}
		\item $a$ is called \emph{irreducible} in $R$ if $a=bc$ with $b,c\in R$ implies $a\in R^\times$ or $b\in R^\times$, and
		\item $a$ is called \emph{prime} in $R$ if the ideal $\langle a\rangle$ is a prime ideal.
        \end{itemize}
Two elements $a$ and $b$ of $R$ are said to be \emph{associate} in $R$ if $a=bc$ for some unit $c$ in $R$ (note that in this case, $\langle a\rangle=\langle b\rangle$).

A \emph{unique factorisation domain (UFD)} is an integral domain $R$ in which every nonzero element $a\in R$ which is not a unit has the following two properties:
\begin{itemize}
	\item $a$ can be written as a finite product of irreducibles in $R$: $a=p_1\cdots p_k$ where the factors are irreducibles in $R$, and
	\item the decomposition is unique up to associates: that is, if $a=p_1'\cdots p_{\ell}'$ is another factorisation of $a$ into irreducibles, then $k=\ell$ and there is some renumbering of the factors so that $p_j$ is associate to $p_j'$ for $j=1,\ldots,k$.
\end{itemize}

\begin{proposition}[{\cite[Proposition 12, p. 286]{DF04}}] 
	In a UFD a nonzero non-unit element is a prime if and only if it is irreducible. 
\end{proposition}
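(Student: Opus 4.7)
The plan is to treat the two implications separately; the forward direction will not require the UFD hypothesis at all, so I would prove it in the greater generality of an integral domain. Suppose $a$ is prime and write $a=bc$; since $\langle a\rangle$ is a prime ideal containing $bc$, one of $b,c$ lies in $\langle a\rangle$. Say $b=ar$. Then $a=arc$, and cancelling $a$ (valid because $a\neq 0$ in an integral domain) yields $rc=1$, so $c$ is a unit. Thus $a$ is irreducible.

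For the converse, suppose $a$ is irreducible and $a\mid bc$, that is, $bc=ad$ for some $d\in R$. I would first strip out the trivial cases: if $b$ or $c$ is $0$, then $a$ divides that factor trivially; if $b$ (resp.\ $c$) is a unit, multiplying by its inverse shows $a\mid c$ (resp.\ $a\mid b$). With those cases removed, I would invoke the UFD hypothesis to factor $b=p_1\cdots p_k$ and $c=q_1\cdots q_\ell$ into irreducibles, with $k,\ell\geq 1$.

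The main step is then a uniqueness-of-factorisation comparison applied to the equation $ad=p_1\cdots p_k q_1\cdots q_\ell$. If $d$ is a unit, absorbing $d^{-1}$ into the leading factor would express the irreducible $a$ as a product of $k+\ell\geq 2$ irreducibles, which contradicts the uniqueness clause of the UFD definition (for an irreducible, the only irreducible factorisation has length one, up to associates). Otherwise I would factor $d=r_1\cdots r_m$ into irreducibles as well, obtaining two factorisations of $bc$ into irreducibles, namely $ar_1\cdots r_m$ and $p_1\cdots p_k q_1\cdots q_\ell$. Uniqueness up to associates and reordering then forces $a$ to be associate to some $p_i$ or some $q_j$, whence $a\mid b$ or $a\mid c$, and $a$ is prime. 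I expect the main obstacle to be purely organisational — keeping the degenerate subcases (zero factors, unit factors, $d$ a unit versus $d$ a non-unit) tidy — because once the irreducible factorisations are written out, the uniqueness axiom of a UFD discharges the rest essentially for free.
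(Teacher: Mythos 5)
Your proof is correct. Note that the paper does not prove this statement at all --- it is quoted from \cite[Proposition 12, p.~286]{DF04} as a standard preliminary --- and your argument is precisely the standard textbook one: prime $\Rightarrow$ irreducible by cancellation in an integral domain, and irreducible $\Rightarrow$ prime by comparing the two irreducible factorisations of $bc=ad$ and invoking uniqueness (your degenerate subcases are handled correctly, with the only unstated point being that $d\neq 0$ once $b,c\neq 0$, which is immediate; also, in the subcase where $d$ is a unit you could bypass uniqueness entirely, since $a=b\,(cd^{-1})$ with both factors non-units already contradicts irreducibility).
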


\begin{theorem}[{\cite[Theorem 14, p. 287]{DF04}}]
	Every PID is a UFD.
\end{theorem}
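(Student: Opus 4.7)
The plan is to verify both defining properties of a UFD: every nonzero non-unit of a PID $R$ admits a factorization into irreducibles, and any two such factorizations agree up to reordering and replacing factors by associates.

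For existence, I would first establish the ascending chain condition on principal ideals in $R$: given a chain $\langle a_1 \rangle \subseteq \langle a_2 \rangle \subseteq \cdots$, the union $I := \bigcup_i \langle a_i \rangle$ is an ideal, hence principal by hypothesis, so $I = \langle b \rangle$ with $b \in \langle a_n \rangle$ for some $n$, forcing the chain to stabilise from index $n$ onwards. Now suppose for contradiction that some nonzero non-unit $a \in R$ has no factorization into irreducibles; then $a$ itself is not irreducible, so $a = a_1 b_1$ with both factors non-units, and at least one of them (say $a_1$) also fails to have such a factorization. Using the dictionary between divisibility and ideal inclusion from the preliminaries, $\langle a \rangle \subseteq \langle a_1 \rangle$, and equality would force $b_1$ to be a unit; hence the inclusion is strict. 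Iterating produces a strictly ascending chain of principal ideals, contradicting the chain condition just established.

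For uniqueness, I would first prove that every irreducible element of $R$ is prime. If $p$ is irreducible, I claim $\langle p \rangle$ is maximal: if $\langle p \rangle \subsetneq \langle d \rangle$ for some $d \in R$, then $p = dr$, and irreducibility forces $d$ or $r$ to be a unit. A unit $r$ yields $\langle d \rangle = \langle p \rangle$, contradicting strict inclusion, so $d$ must be a unit and $\langle d \rangle = R$. By the proposition identifying maximal ideals with those whose quotient ring is a field, $R/\langle p \rangle$ is a field, hence an integral domain, so $\langle p \rangle$ is prime and $p$ is prime. Given two irreducible factorizations $p_1 \cdots p_k = q_1 \cdots q_\ell$, primality of $p_1$ gives $p_1 \mid q_j$ for some $j$, and irreducibility of $q_j$ forces $p_1$ and $q_j$ to be associate. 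Cancelling and inducting on $k$ completes the argument.

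The main subtle point is conceptual rather than computational: both halves of the proof rest on extracting distinct consequences from the single principal ideal hypothesis---on the one hand the ascending chain condition needed to produce irreducible factorizations, and on the other the maximality of $\langle p \rangle$ for irreducible $p$, which delivers primality and hence uniqueness. Everything hinges on the translation between divisibility and ideal inclusion highlighted earlier in the preliminaries.
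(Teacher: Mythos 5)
Your proof is correct: the ascending chain argument on principal ideals for existence, together with the observation that an irreducible generates a maximal (hence prime) ideal for uniqueness, is exactly the standard argument. The paper does not prove this statement itself but cites it from Dummit and Foote, whose proof follows the same route as yours, so there is nothing to add.
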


\begin{proposition}
	Let $R$ be a finite commutative ring with identity. Then every element of $R$ is either a unit or a zero divisor.
\end{proposition}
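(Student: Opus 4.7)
The plan is to use the pigeonhole principle, exploiting that $R$ is finite. Fix any nonzero $a \in R$ (the case $a = 0$ is handled by convention or excluded depending on how one reads the statement, since the paper's definition of zero divisor requires nonzero; I would treat the nontrivial case of a nonzero $a$) and consider the left multiplication map $\phi_a \colon R \to R$ defined by $\phi_a(x) = ax$. This $\phi_a$ is a homomorphism of the underlying additive group $(R,+)$, and since $R$ is finite, $\phi_a$ is injective if and only if it is surjective.

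First I would handle the case where $\phi_a$ is injective. Then $\phi_a$ is a bijection of $R$, so there exists $b \in R$ with $\phi_a(b) = 1$, i.e.\ $ab = 1$. Because $R$ is commutative, we also have $ba = 1$, so $a \in R^\times$ is a unit.

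Otherwise $\phi_a$ fails to be injective, meaning there exist distinct $x, y \in R$ with $ax = ay$. Setting $c := x - y \neq 0$, this gives $ac = 0$ with $c \neq 0$, so $a$ meets the definition of a zero divisor given in the preliminaries.

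There is no real obstacle: the whole argument rests on the elementary observation that an injective self-map of a finite set is automatically surjective, combined with commutativity to upgrade a one-sided inverse to a two-sided inverse. The only subtlety worth flagging in the write-up is the mild discrepancy between ``every element'' in the statement and the paper's convention that zero divisors are nonzero; this is a matter of wording rather than of mathematics.
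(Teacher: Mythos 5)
Your proof is correct and uses essentially the same idea as the paper: finiteness of $R$ applied to the multiplication map $x \mapsto ax$, with non-injectivity yielding a zero divisor. The paper phrases it contrapositively (a non-unit has $\langle a\rangle \neq R$, so the map onto $\langle a\rangle$ cannot be injective), while you argue via the injective-iff-surjective dichotomy, but the underlying argument is the same.
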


\begin{proof}
	Let $a$ be a nonzero non-unit element of $R$. Then $\langle a\rangle\neq R$, and the function
	\begin{align*}
		\phi&: R\rightarrow\langle a\rangle\\
		&x\mapsto ax
	\end{align*}
	cannot be injective. So, there exists $x\neq y$ such that $ax=ay$. Thus, we find that $a(x-y)=0$ where $x-y\neq 0$, wherby $a$ is a zero divisor.
\end{proof}

		Let $I$ be an ideal of $R$. Let $a,b\in R$. We write $a\equiv b\pmod J$ if $a-b\in J$.

	For the ring $R$, we call an ideal $J$ \emph{non-trivial} if it is a nonzero ideal which is not equal to $R$.

	\begin{lemma}
		Let $K$ be a finite commutative ring with identity having precisely three ideals. Then $K\cong P/\langle p^2\rangle$ for some principal ideal domain $P$ and prime $p\in P$.
	\end{lemma}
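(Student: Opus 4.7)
The plan is first to extract the algebraic skeleton of $K$ (a finite local ring whose maximal ideal is principal and square-zero) and then to realise such a ring as $P/\langle p^2\rangle$ for a well-chosen PID. Having only three ideals $\{0\}, J, K$ forces $J$ to be the unique maximal ideal, so $K$ is local with residue field $\mathbb{F}_q$; any nonzero $r \in J$ satisfies $0 \neq \langle r\rangle \subseteq J$, hence $J = \langle r\rangle$. Similarly $J^2 \in \{0, J\}$, and $J^2 = J$ is ruled out by Nakayama's lemma, so $J^2 = 0$. Writing $q = p^f$ with $p$ a prime, the image of $p \in \mathbb{Z}$ in $K$ lies in $J$, so $\operatorname{char}(K) \in \{p, p^2\}$; this will give the two cases of the proof.

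Before splitting into cases I would produce a Hensel-style lift of a primitive element. Let $\bar\alpha$ generate $\mathbb{F}_q$ over $\mathbb{F}_p$ with monic irreducible minimal polynomial $\bar h \in \mathbb{F}_p[x]$ of degree $f$, and let $h \in \mathbb{Z}[x]$ be a monic lift of $\bar h$. For any lift $\alpha_0 \in K$ of $\bar\alpha$, the value $h(\alpha_0)$ lies in $J$ while $h'(\alpha_0)$ is a unit (as $\bar h$ is separable), so the one-step Newton update $\alpha := \alpha_0 - h(\alpha_0)/h'(\alpha_0)$ satisfies $\alpha \equiv \bar\alpha \pmod{J}$ and, since $J^2 = 0$, also $h(\alpha) = 0$ exactly.

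In the case $\operatorname{char}(K) = p$, the subring $L := \mathbb{F}_p[\alpha] \subseteq K$ is a field isomorphic to $\mathbb{F}_p[x]/\langle\bar h\rangle \cong \mathbb{F}_q$ that projects isomorphically onto $K/J$; using $J = Lr$ (from $K = L + J$, $J = Kr$, and $J^2 = 0$) together with $r^2 = 0$, one gets $K \cong \mathbb{F}_q[y]/\langle y^2\rangle$, so one may take the PID $P = \mathbb{F}_q[y]$ with prime $y$. In the case $\operatorname{char}(K) = p^2$, the element $p \in K$ is a nonzero element of $J$, forcing $J = \langle p\rangle$; then $K = \mathbb{Z}[\alpha] + J$ combined with $J = pK \subseteq p\mathbb{Z}[\alpha] + pJ = p\mathbb{Z}[\alpha]$ gives $K = \mathbb{Z}[\alpha]$, and the surjection $(\mathbb{Z}/p^2)[x]/\langle h\rangle \to K$, $x \mapsto \alpha$, is forced to be an isomorphism by matching cardinalities ($q^2$ on both sides). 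To express this $K$ as $P/\langle p^2\rangle$ with $P$ a PID, I would take $P$ to be the localisation of $\mathbb{Z}[x]/\langle h\rangle$ at the prime ideal $\langle p\rangle$: since $\bar h$ is irreducible mod $p$, Gauss's lemma forces $h$ irreducible in $\mathbb{Q}[x]$, so $\mathbb{Z}[x]/\langle h\rangle$ is a one-dimensional Noetherian domain, and its localisation $P$ is a one-dimensional Noetherian local domain with principal maximal ideal $\langle p\rangle$, hence a DVR (and in particular a PID), with $P/\langle p^2\rangle \cong (\mathbb{Z}/p^2)[x]/\langle h\rangle \cong K$ and prime $p$.

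The main obstacle is this final step in the mixed-characteristic case: the natural candidate $\mathbb{Z}[x]/\langle h\rangle$ need not be a PID (its class group can be nontrivial), so the key move is to localise at the unique prime above $p$ to obtain a DVR without disturbing the quotient by $\langle p^2\rangle$. The other ingredients — the Nakayama-style calculation $J^2 = 0$, the single-step Hensel lift enabled by that nilpotence, and the coefficient-field identification — are standard once this setup is in place.
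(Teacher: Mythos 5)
Your proof is correct, but it takes a genuinely different route from the paper's. The paper observes that all three ideals $\langle 0\rangle$, $J=\langle r\rangle$, $K$ are principal, invokes Hungerford's structure theorem for principal ideal rings to write $K$ as a finite direct product of quotients of PIDs, notes that having precisely three ideals forces a single factor $K\cong P/\langle x\rangle$, and then counts divisors of $x$ up to associates (exactly three) to force $x=p^2$ with $p$ prime. You instead argue from scratch: $K$ is local with $J=\langle r\rangle$ and $J^2=0$, the characteristic is $p$ or $p^2$, and a one-step Newton/Hensel lift (valid precisely because $J^2=0$) produces an exact root $\alpha$ of a monic lift $h$ of the minimal polynomial of a primitive element of the residue field; this identifies $K$ with $\mathbb{F}_q[y]/\langle y^2\rangle$ in equal characteristic and with $(\mathbb{Z}/p^2\mathbb{Z})[x]/\langle h\rangle$ in mixed characteristic, the latter then exhibited as $P/\langle p^2\rangle$ for the DVR $P$ obtained by localising $\mathbb{Z}[x]/\langle h\rangle$ at its unique prime above $p$ --- a necessary detour, as you rightly note, since $\mathbb{Z}[x]/\langle h\rangle$ itself need not be a PID. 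The paper's route buys brevity at the cost of citing Hungerford's theorem as a black box; yours buys a self-contained argument and in fact a complete explicit classification (the rings in question are exactly $\mathbb{F}_q[y]/\langle y^2\rangle$ and the Galois rings $(\mathbb{Z}/p^2\mathbb{Z})[x]/\langle h\rangle$ with $\deg h=f$, $q=p^f$), which is more than the lemma asserts and, incidentally, for $f\ge 2$ supplies mixed-characteristic examples beyond the two families $K_1$, $K_2$ mentioned in the paper's introduction. The only step you leave tacit is $|K|=q^2$, which your cardinality comparisons need: it follows since $J=Kr\cong K/\operatorname{Ann}(r)$ and $\operatorname{Ann}(r)$, being a proper ideal containing $J$, must equal $J$, so $|J|=q$; this is essentially the content of the paper's Lemma \ref{lemjr} and is a one-line patch rather than a gap.
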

	\begin{proof}
		$K$ is a principal ideal ring, so by \cite[Theorem 1]{H68}, it is a finite direct product of quotients of PIDs. Let $K\cong P_1/Q_1\times \cdots\times P_i/Q_i$ where $i\geq 1$, and for $j\in\{1,\ldots,i\}$, $P_j$ is a PID and $Q_j$ is an ideal of $P_j$. But $K$ has precisely three ideals, so $i=1$ and therefore $K\cong P/Q$ for some PID $P$ and some ideal $Q$ of $P$. Let $Q=\langle x\rangle$ for some $x\in Q$. Then, ideals of $P/Q$ are of the form $Q_1/Q$ where $Q_1$ is an ideal containing $Q$. If $Q_1=\langle y\rangle$, then $Q_1$ contains $Q$ if and only if $y$ divides $x$. So, for $P/Q$ to have precisely three ideals, $x$ must have exactly three divisors in $P$, unique up to units. If $x=0$, then $K$ must have exactly three elements, unique up to units, which is not possible. Therefore $x\neq 0$; let $x=p_1^{h_1}\cdots p_i^{h_i}$ (unique up to units), $i\geq 1$. Then $x$ has $(h_1+1)\cdots (h_i+1)$ divisors, so $(h_1+1)\cdots (h_i+1)=3$ implies $i=1, h_1=2$. This completes the proof.
	\end{proof}
	\begin{lemma}
		Let $K$ be a finite commutative ring having identity and a unique non-trivial ideal $J$. Then $K/J\cong \mathbb{F}_q$, where $\mathbb{F}_q$ is a finite field of order $q$ for a prime power $q$.
	\end{lemma}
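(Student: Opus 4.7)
The proof is short and will rely almost entirely on the preliminary material already stated. My plan is as follows.

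First I would identify $J$ as the unique maximal ideal of $K$. By assumption the complete list of ideals is $\{0\}$, $J$, $K$, so the only proper ideals are $\{0\}$ and $J$. A maximal ideal must be proper, and $\{0\} \subsetneq J \subsetneq K$ shows that $\{0\}$ is strictly contained in a proper ideal, so $\{0\}$ is not maximal. Hence $J$ is maximal (trivially, since the only ideals containing $J$ are $J$ and $K$), and in fact it is the unique maximal ideal, making $K$ a local ring.

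Next I would invoke the proposition cited earlier in Section~\ref{sec:Prelim}, which says that an ideal $\mathfrak{m}$ of a commutative ring with identity is maximal if and only if the quotient $R/\mathfrak{m}$ is a field. Applied to $\mathfrak{m} = J$, this gives that $K/J$ is a field.

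Finally, since $K$ is finite, so is the quotient $K/J$; and any finite field has prime power order and is determined up to isomorphism by its cardinality. Writing $q = |K/J|$, we conclude $K/J \cong \mathbb{F}_q$ for a prime power $q$, as required. The only step with any content is the identification of $J$ as maximal, and even that follows immediately from the hypothesis that the three listed ideals are all of the ideals of $K$; there is no genuine obstacle.
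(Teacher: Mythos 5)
Your proof is correct and follows essentially the same route as the paper: identify $J$ as the unique maximal ideal so that $K$ is local, apply the maximal-ideal/field-quotient proposition from Section \ref{sec:Prelim} to get that $K/J$ is a field, and use finiteness to conclude $K/J \cong \mathbb{F}_q$ for a prime power $q$. The paper's own proof is just a terser statement of exactly this argument.
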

	\begin{proof}
		The proof follows from the fact that the ideals of $K$ are $\langle 0\rangle \subsetneq J\subsetneq K$, whereby $K$ is a local ring with unique maximal ideal $J$.
	\end{proof}
	\begin{lemma}\label{lemij=0}
		Let $K$ be a finite commutative ring having identity and a unique non-trivial ideal $J$. Then, for any $x,y\in J$, the equality $xy=0$ holds.
	\end{lemma}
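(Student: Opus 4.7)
The plan is to reduce the statement to showing $J^2 = \langle 0\rangle$ as an ideal of $K$, from which $xy = 0$ for all $x, y \in J$ follows immediately. Since $J^2$ is an ideal of $K$ contained in $J$, and $K$ possesses only the three ideals $\langle 0\rangle$, $J$, $K$, we must have either $J^2 = \langle 0\rangle$ or $J^2 = J$. Hence the task reduces to ruling out the second possibility.

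To exclude $J^2 = J$, I would use the principality of $J$. Writing $J = \langle r\rangle$, we have $J^2 = \langle r^2\rangle$, so $J^2 = J$ would give $r = r^2 s$ for some $s \in K$, that is, $r(1 - rs) = 0$. By the immediately preceding lemma, $K$ is a local ring whose unique maximal ideal is $J$, so by the cited proposition characterising units in a local ring, $K \setminus J = K^\times$. Since $rs \in J$, the element $1 - rs$ cannot lie in $J$ (otherwise $1 \in J$, forcing $J = K$), and hence $1 - rs \in K^\times$. Multiplying $r(1-rs)=0$ by $(1-rs)^{-1}$ yields $r = 0$, contradicting the non-triviality of $J$. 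Therefore $J^2 = \langle 0\rangle$, and the lemma follows.

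An alternative, more concrete route would appeal to the structural lemma $K \cong P/\langle p^2\rangle$ proved just above: under this isomorphism $J$ corresponds to $\langle p\rangle/\langle p^2\rangle$, so any two elements of $J$ are represented by $pa$ and $pb$ with $a,b \in P$, and their product $p^2 ab$ lies in $\langle p^2\rangle$, i.e.\ equals $0$ in $K$.

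The only delicate point in either approach is invoking the local ring characterisation to conclude that $1 - rs$ is a unit; once that is in place, the argument is immediate, so I do not expect any serious obstacle.
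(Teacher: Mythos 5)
Your proof is correct, and it packages the argument differently from the paper. You work at the level of ideals: $J^2$ is an ideal contained in $J$, so by the three-ideal hypothesis it is $\langle 0\rangle$ or $J$, and you kill the case $J^2=J$ by taking a generator $r$ of $J$, writing $r=r^2s$, and cancelling the unit $1-rs$ (units because $K$ is local with maximal ideal $J$). The paper instead argues element-wise: it fixes $x,y\in J$ with $xy\neq 0$, notes $\langle xy\rangle$ must be $J$ or $K$, rules out $K$ since $x$ would be a unit, and in the case $\langle xy\rangle=J$ writes $xyz=x$ and cancels the unit $yz-1$. So the contradiction mechanism --- an equation $t(1-u)=0$ with $u\in J$, hence $1-u$ a unit --- is the same in both, but the paper never needs $J$ to be principal, whereas you do; that assumption is harmless here, both because the paper's standing setup has $J=\langle r\rangle$ (and its first lemma gives $K\cong P/\langle p^2\rangle$) and because principality is immediate anyway: any nonzero $x\in J$ is a non-unit, so $\langle x\rangle$ is a nonzero proper ideal and hence equals $J$. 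Your alternative route through the structure lemma $K\cong P/\langle p^2\rangle$, where $J$ corresponds to $\langle p\rangle/\langle p^2\rangle$ and products of its elements land in $\langle p^2\rangle$, is genuinely different from the paper's self-contained element computation and is arguably the quickest, at the cost of leaning on that earlier structural result.
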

	\begin{proof}
		The result holds if $x=0$ or $y=0$, so assume that $x\neq 0, y\neq 0$. Suppose $xy\neq 0$. Then $\langle xy\rangle=J$ or $\langle xy\rangle=K$. If $\langle xy\rangle=K$ then $1\in K=\langle xy\rangle$, so $xyw=1$ for some $w\in K$, which implies that $x$ is a unit, so $x\notin J$, a contradiction. Therefore, $\langle xy\rangle=J$. Then, $xyz=x$ for some $z\in K$, that is, $x(yz-1)=0$. But $y\in J$ implies $yz\in J$. Since $J$ is a non-trivial ideal of $K$, $1$ does not belong to $J$. Then $(yz-1)\notin J$, so $(yz-1)$ is a unit. which implies $x=0$, a contradiction. This completes the proof.
	\end{proof}
	\begin{lemma}\label{lemjr}
		Let $K$ be a finite commutative ring with identity having a unique non-trivial ideal $J=\langle r\rangle$, such that $K/J\cong \mathbb{F}_q$, where $\mathbb{F}_q$ is a finite field of order $q$ for a prime power $q$. Let $K/J=\{z_1+J,z_2+J,\ldots, z_q+J\}$ where $\{z_1,z_2,\ldots,z_q\}$ is a set of coset representatives. Then $J=\{z_1r,z_2r,\ldots, z_qr\}$ and $K=\{z_{j_1}+z_{j_2}r: j_1,j_2\in\{1,2,\ldots,q\}\}$, where $|J|=q$ and $|K|=q^2$.
	\end{lemma}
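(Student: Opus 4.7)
The plan is to exploit the fact that $K$ is a local ring with maximal ideal $J$, so that $K \setminus J = K^{\times}$, together with Lemma~\ref{lemij=0} which forces $rx = 0$ for every $x \in J$.

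First, I would examine the multiplication-by-$r$ map $\phi\colon K \to J$ defined by $\phi(x) = xr$. Since $J = \langle r\rangle$, the map $\phi$ is surjective by definition. By Lemma~\ref{lemij=0}, $xr = 0$ for all $x \in J$, so $J \subseteq \ker\phi$. Conversely, if $x \notin J$, then by the local-ring property $x$ is a unit, hence $xr = 0$ would imply $r = 0$, contradicting the assumption that $J$ is non-trivial. Thus $\ker\phi = J$ exactly, and $\phi$ descends to a bijection $\overline{\phi}\colon K/J \to J$ given by $z_i + J \mapsto z_i r$.

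Next, applying $\overline{\phi}$ to the coset representatives $\{z_1,\ldots,z_q\}$ yields $J = \{z_1 r, z_2 r, \ldots, z_q r\}$, and since $|K/J| = |\mathbb{F}_q| = q$, this also gives $|J| = q$ and $|K| = q \cdot |J| = q^2$. For the description of $K$, every $a \in K$ lies in a unique coset $z_{j_1} + J$, so $a = z_{j_1} + w$ for some $w \in J$; by the first part, $w = z_{j_2} r$ for some $j_2 \in \{1,\ldots,q\}$. Hence $K \subseteq \{z_{j_1} + z_{j_2} r : j_1, j_2 \in \{1,\ldots,q\}\}$, and the reverse inclusion is clear. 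A quick cardinality check (the $q^2$ pairs $(j_1,j_2)$ give distinct elements because if $z_{j_1} + z_{j_2} r = z_{k_1} + z_{k_2} r$ then reducing mod $J$ forces $j_1 = k_1$, and then $(z_{j_2} - z_{k_2}) r = 0$ combined with $\ker \phi = J$ forces $j_2 = k_2$) confirms $|K| = q^2$ and closes the argument.

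The argument is essentially routine once one has Lemma~\ref{lemij=0} and the local-ring characterization of units; the only mildly delicate step is verifying $\ker\phi \subseteq J$, which is the place where the hypothesis that $J$ is the \emph{unique} non-trivial ideal (rather than just some ideal with $r^2 = 0$) enters through the fact that elements outside $J$ must be units.
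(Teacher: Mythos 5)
Your proposal is correct and rests on exactly the same ingredients as the paper's proof (elements outside $J$ are units, $xr=0$ for $x\in J$ by Lemma \ref{lemij=0}, and distinct coset representatives differ by a unit), merely repackaged through the multiplication-by-$r$ map $\phi(x)=xr$ and its kernel. If anything, your surjectivity/kernel formulation makes explicit the inclusion $J\subseteq\{z_1r,\ldots,z_qr\}$, which the paper's proof treats as evident after checking the $z_jr$ are distinct.
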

	\begin{proof}
		Evidently, $\{z_1r,z_2r,\ldots, z_qr\}\subseteq J$. Let $1\leq j_1<j_2\leq q$, and suppose $z_{j_1}r=z_{j_2}r$. This implies $(z_{j_1}-z_{j_2})r=0$. Since $z_{j_1}+J$ and $z_{j_2}+J$ are distinct elements of $K/J$, $(z_{j_1}-z_{j_2})$ does not belong to $J$, and so $(z_{j_1}-z_{j_2})$ is a unit. This implies $r=0$ which is not possible. Therefore, $J=\{z_1r,z_2r,\ldots, z_qr\}$. Next, we observe that $|K|=|K/J|\cdot|J|=q^2$, so it suffices to show that the elements of $\{z_{j_1}+z_{j_2}r: j_1,j_2\in\{1,2,\ldots,q\}\}$ are distinct. Let $z_{j_1}+z_{j_2}r=z_{j_3}+z_{j_4}r$ for some $j_1,j_2,j_3,j_4\in\{1,2,\ldots,q\}$. Then $z_{j_1}-z_{j_3}=(z_{j_2}-z_{j_4})r$, and since $(z_{j_2}-z_{j_4})r\in J$, we conclude $z_{j_1}=z_{j_3}$. So, $(z_{j_2}-z_{j_4})r=0$. If $z_{j_2}\neq z_{j_4}$ then $z_{j_4}+J\neq z_{j_2}+J$. So, $(z_{j_2}-z_{j_4})$ is a unit, and, consequently, $r=0$ which is not possible. This completes the proof.
	\end{proof}
	Note that if $K/J=\{z_1+J,z_2+J,\ldots, z_q+J\}$ where $\{z_1,z_2,\ldots,z_q\}$ is a set of coset representatives, then for $j_1,j_2\in\{1,2,\ldots,q\}$, $(z_{j_1}-z_{j_2})$ is invertible in $K$ if and only if $j_1\neq j_2$.

	\begin{lemma}
		Let $K$ be a finite commutative ring with identity having a unique non-trivial ideal $J=\langle r\rangle$, such that $K/J\cong \mathbb{F}_q$, where $\mathbb{F}_q$ is a finite field of order $q$ for a prime power $q$. Let $K/J=\{z_1+J,z_2+J,\ldots, z_q+J\}$ where $\{z_1,z_2,\ldots,z_q\}$ is a set of coset representatives with $z_{1}=0$. Let $j_1,j_2\in\{1,2,\ldots,q\}$. Then the following hold.
		\begin{enumerate}[label=\textnormal{(\arabic*)},ref=\thelemma (\arabic*)]
			\item \label{lempart1}$z_{j_1}+z_{j_2}r\in J$ if and only if $z_{j_1}=0$.
			\item \label{lempart2} $z_{j_1}+z_{j_2}r=0$ if and only if $z_{j_1}=z_{j_2}=0$.
		\end{enumerate}
	\end{lemma}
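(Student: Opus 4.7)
The plan is to exploit Lemma~\ref{lemjr}, which guarantees that every element of $K$ has a unique representation in the form $z_{j_1}+z_{j_2}r$ with $j_1, j_2 \in \{1,\ldots,q\}$, and in particular that the representatives $z_1=0, z_2, \ldots, z_q$ behave as a ``transversal'' of $J$ in $K$. Since the two parts of the lemma concern membership of such an element in $J$ (respectively equality to zero, which is a special point of $J$), everything will follow by tracking the $z_{j_1}$ part modulo $J$.

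For part~(1), I would first handle the easy direction: if $z_{j_1}=0$, then $z_{j_1}+z_{j_2}r = z_{j_2}r$, and since $r \in J$ and $J$ is an ideal, this lies in $J$. For the converse, assume $z_{j_1}+z_{j_2}r \in J$. Because $z_{j_2}r \in J$ (again as $J$ is an ideal containing $r$), subtracting gives $z_{j_1} \in J$. But $\{z_1+J, z_2+J, \ldots, z_q+J\}$ is a transversal of the cosets of $J$ in $K$, and the unique coset representative that lies in $J$ itself is the one representing the zero coset, namely $z_1 = 0$. Hence $z_{j_1}=0$.

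For part~(2), the backward direction is immediate. For the forward direction, suppose $z_{j_1}+z_{j_2}r = 0$. Since $0 \in J$, part~(1) yields $z_{j_1}=0$, and so $z_{j_2}r = 0$. Now I would invoke the distinctness assertion inside Lemma~\ref{lemjr}: the elements $z_1 r, z_2 r, \ldots, z_q r$ are pairwise distinct, and $z_1 r = 0$ since $z_1 = 0$; therefore $z_{j_2}r = 0 = z_1 r$ forces $j_2 = 1$, whence $z_{j_2}=0$.

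There is essentially no analytic obstacle here; the argument is a direct unpacking of the normal form established in Lemma~\ref{lemjr}. The only point that requires care is making explicit the two features of that lemma being used, namely (a) that the $z_j$'s form a genuine transversal modulo $J$ (so that at most one of them lies in $J$), and (b) that the map $z_j \mapsto z_j r$ is injective on the transversal (so that $z_{j_2}r=0$ pins down $z_{j_2}=0$). Both are already recorded there, so the write-up should be short.
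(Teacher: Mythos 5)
Your proposal is correct and follows essentially the same route as the paper: the paper's one-line proof rests on the observation that $z_j\in J$ if and only if $z_j=0$, which is exactly the fact you make explicit for part (1), and your use of the pairwise distinctness of $z_1r,\ldots,z_qr$ from Lemma~\ref{lemjr} for part (2) is just an unpacking of what the paper leaves implicit. No gaps; your write-up is simply a more detailed version of the paper's sketch.
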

	\begin{proof}
		The proofs readily follow by noting that for $j\in \{1,2,\ldots,q\}$, $z_{j}$ belongs to $J$ if and only if $z_{j}=0$.
	\end{proof}

        \section{Proofs of Theorem \ref{thm:Main1} and Theorem \ref{thm:Main2}}\label{sec:Proofs}

In this section, we give proofs of Theorem \ref{thm:Main1} and Theorem \ref{thm:Main2} in a series of lemmas and propositions.

For a graph $G$, we denote the complementary graph of $G$ by $\overline{G}$.
        \subsection{The number of vertices and the degrees of the graphs $X(2e,K)$ and $Y(2e,K)$}
In this section, we determine the number of vertices and the degrees of the graphs $X(2e,K)$ and $Y(2e,K)$.

	\begin{proposition}\label{prop1}
    Let $e\geq 2$ be an integer.
		Let $K$ be a finite commutative ring with identity having a unique non-trivial ideal $J$ such that $K/J\cong \mathbb{F}_q$, where $q$ is a prime power.  The number of vertices in each of the graphs $X(2e,K)$ and $Y(2e,K)$ is $\dfrac{q^{2e-1}(q^{2e}-1)}{q-1}$.  
	\end{proposition}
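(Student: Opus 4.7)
The plan is to count $|V|$ directly as a quotient: count the tuples in $V'$, then show that every equivalence class under $\sim$ has the same size, namely $|K^\times|$.

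First I would determine $|V'|$. A tuple in $K^{2e}$ fails to lie in $V'$ precisely when none of its coordinates is a unit, i.e., when every coordinate lies in $K\setminus K^\times$. Since $K$ is a local ring with unique maximal ideal $J$, the preliminary results give $K\setminus K^\times = J$, so the number of excluded tuples is $|J|^{2e} = q^{2e}$. Hence
\begin{align*}
|V'| = |K|^{2e} - |J|^{2e} = q^{4e} - q^{2e}.
\end{align*}
Also $|K^\times| = |K| - |J| = q^2 - q = q(q-1)$.

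Next I would show that each equivalence class has exactly $|K^\times|$ elements. The group $K^\times$ acts on $V'$ coordinatewise by $\lambda\cdot(a_1,\dots,a_{2e}) = (\lambda a_1,\dots,\lambda a_{2e})$, and the equivalence classes are precisely the orbits. For $(a_1,\dots,a_{2e})\in V'$ there exists $j_0$ with $a_{j_0}\in K^\times$; if $\lambda\in K^\times$ stabilises the tuple, then $(\lambda-1)a_{j_0}=0$, and multiplying by $a_{j_0}^{-1}$ gives $\lambda = 1$. Thus every stabiliser is trivial, every orbit has length $|K^\times|=q(q-1)$, and
\begin{align*}
|V| = \frac{|V'|}{|K^\times|} = \frac{q^{4e}-q^{2e}}{q(q-1)} = \frac{q^{2e-1}(q^{2e}-1)}{q-1},
\end{align*}
as required. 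Since both $X(2e,K)$ and $Y(2e,K)$ share this vertex set, the formula applies to both graphs.

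There is no real obstacle here; the only point needing care is the identification $K\setminus K^\times = J$, which follows from the local ring proposition recalled in Section~\ref{sec:Prelim}, and the verification that the $K^\times$-action is free on $V'$, which uses nothing more than the existence of a unit coordinate in each tuple.
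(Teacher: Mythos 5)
Your proposal is correct and follows essentially the same route as the paper: count $|V'| = q^{4e}-q^{2e}$ using $K\setminus K^\times = J$, then show each $\sim$-class has exactly $|K^\times| = q^2-q$ elements via the invertible coordinate forcing $\lambda=1$, and divide. The paper phrases the class-size argument as $\lambda_1 x = \lambda_2 x \Rightarrow \lambda_1=\lambda_2$ rather than as a free group action, but the content is identical.
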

	\begin{proof}
		The number of elements in $V'$ is $(q^2)^{2e}-q^{2e}$. We find that the number of elements in an equivalence class corresponding to the relation $\sim$ is the number of elements in $K^\times$, by noting that $\lambda_1 (x_1,x_2,\ldots, x_{2e})=\lambda_2 (x_1,x_2,\ldots, x_{2e})$ for some $\lambda_1,\lambda_2\in K^\times$ and $(x_1,x_2,\ldots,x_{2e})\in V'$ implies that $(\lambda_1-\lambda_2)x_j=0$ for $1\leq j\leq 2e$, so if $x_i\in K^\times$ for some $i\in\{1,2,\ldots,2e\}$ then $(\lambda_1-\lambda_2)x_i=0$ implies that $\lambda_1=\lambda_2$. So, the number of vertices in the graph equals the number of equivalence classes corresponding to $\sim$, that is, 
		$$\dfrac{(q^2)^{2e}-q^{2e}}{q^2-q}
		=
		\dfrac{q^{2e-1}(q^{2e}-1)}{q-1}. 
		$$
	\end{proof}
	
	\begin{proposition}\label{prop2}
    Let $e\geq 2$ be an integer.
		Let $K$ be a finite commutative ring with identity having a unique non-trivial ideal $J$ such that $K/J\cong \mathbb{F}_q$, where $q$ is a prime power.  Then the graph $\overline{X(2e,K)}$ is regular of degree $\dfrac{q^{4e-3}-q^{2e-2}-q+1}{q-1}$, and the graph $X(2e,K)$ is regular of degree $q^{4e-2}+q^{4e-3}-q^{2e-2}$.
	\end{proposition}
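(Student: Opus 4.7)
The plan is to fix a vertex $[a]\in V$ and count the number of $[b]\in V$ with $[b]\neq[a]$ and $aMb^t=0$; this is precisely the degree of $[a]$ in $\overline{X(2e,K)}$, and the degree in $X(2e,K)$ then follows from Proposition \ref{prop1} via the complement identity $\deg_{X(2e,K)}=v-1-\deg_{\overline{X(2e,K)}}$. Since $M$ is antisymmetric, $aMa^t=0$, so $[a]$ itself appears among the solutions of $aMb^t=0$ and must be removed at the end.

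The key step is to evaluate $N:=|\{b\in K^{2e}:aMb^t=0\}|$ together with the subcount $N':=|\{b\in J^{2e}:aMb^t=0\}|$, so that $N-N'$ counts representatives in $V'$ with $aMb^t=0$ and, after dividing by the equivalence-class size $|K^\times|=q^2-q$ (see the proof of Proposition \ref{prop1}), gives the number of vertices $[b]$ satisfying the condition. The block form of $M$ shows that $aM$ is a signed permutation of the coordinates of $a$, hence has a unit coordinate because $a$ does; consequently the $K$-linear map $\phi\colon K^{2e}\to K$, $b\mapsto aMb^t$, is surjective and $N=|\ker\phi|=q^{4e}/q^2=q^{4e-2}$.

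The more delicate count is $N'$. I would write each coordinate of $aM$ as $u_i+w_ir$ with $u_i,w_i$ coset representatives, and each $b_i\in J$ as $c_ir$ (possible by Lemma \ref{lemjr}). Lemma \ref{lemij=0} (equivalently, $r^2=0$) kills every cross-term of the form $w_ir\cdot c_ir$, and in $u_ic_ir$ only the class of $u_ic_i$ modulo $J$ survives. Hence $aMb^t=0$ reduces to a single $\mathbb{F}_q$-linear equation in the reductions $\bar c_i\in\mathbb{F}_q$, nontrivial because $a\in V'$ forces the reduction of $a$ to be a nonzero vector in $\mathbb{F}_q^{2e}$; this yields $N'=q^{2e-1}$. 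Assembling, the number of vertices $[b]$ with $aMb^t=0$ equals $(N-N')/(q^2-q)=q^{2e-2}(q^{2e-1}-1)/(q-1)$; subtracting $1$ for $[a]$ itself produces the stated degree of $\overline{X(2e,K)}$, and a short algebraic simplification of $v-1$ minus this quantity yields $q^{4e-2}+q^{4e-3}-q^{2e-2}$ as the degree of $X(2e,K)$. The main obstacle is the reduction of $N'$ to an $\mathbb{F}_q$-linear problem; everything else is parameter bookkeeping.
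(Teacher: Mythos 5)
Your proposal is correct and follows essentially the same route as the paper: fix $[a]$, count solutions of $aMb^t=0$ over $K^{2e}$, remove those lying in $J^{2e}$ (i.e.\ outside $V'$), divide by the class size $q^2-q$, subtract one for $[a]$ itself, and pass to the complement — the paper arrives at the identical count $\frac{(q^2)^{2e-1}-q^{2e-1}}{q^2-q}-1$ by explicitly solving for one coordinate $b_{e+i}$ instead of your kernel-size and mod-$J$ hyperplane argument, which is only a cosmetic difference in bookkeeping.
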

	\begin{proof}
		Let us fix a vertex $[a_1,a_2,\ldots,a_{2e}]$ of the graph $\overline{X(2e,K)}$. We find the number of possibilities for the vertex $[b_1,b_2,\ldots,b_{2e}]$ such that $[b_1,b_2,\ldots,b_{2e}]$ is adjacent to $[a_1,a_2,\ldots,a_{2e}]$ in $\overline{X(2e,K)}$. To begin with, we find the number of solutions $(b_1,b_2,\ldots, b_{2e})$ such that
		\begin{align}\label{eq0}
			&(a_1b_{e+1}-a_{e+1}b_1) + (a_2b_{e+2}-a_{e+2}b_2) + \cdots + (a_eb_{2e}-a_{2e}b_e )=0.  
		\end{align}
		Since $(a_1,a_2,\ldots,a_{2e})\in V'$, there exists $i\in\{1,2,\ldots,2e\}$ such that $a_i\in K^\times$. If $i\leq e$, then
		\begin{align}\label{eqless}
			b_{e+i} &= a_i^{-1}((a_{e+1}b_1-a_1b_{e+1}) + (a_{e+2}b_2-a_2b_{e+2}) \notag\\
			&+\cdots+ (a_{e+i-1}b_{i-1}-a_{i-1}b_{e+i-1})
			+a_{e+i}b_i + (a_{e+i+1}b_{i+1}- a_{i+1}b_{e+i+1})\notag \\
			&+ \cdots + (a_{2e}b_e- a_eb_{2e} )),
		\end{align}
		and if $i\geq e+1$, then
		\begin{align}\label{eqmore}
			b_{i-e} &= a_i^{-1}((a_{1}b_{e+1}-a_{e+1}b_{1}) + (a_{2}b_{e+2}-a_{e+2}b_{2}) \notag\\
			&+\cdots+ (a_{i-1-e}b_{i-1}-a_{i-1}b_{i-1-e})
			+a_{i-e}b_i + (a_{i+1-e}b_{i+1}- a_{i+1}b_{i+1-e})\notag \\
			&+ \cdots + (a_{e}b_{2e}- a_{2e}b_{e} )).
		\end{align}
		This shows that there are $2e-1$ choices of $b_j$'s possible. Since $(b_1,b_2,\ldots,b_{2e})\in V'$, it must have an invertible entry. When $i\leq e$ (resp. $i\geq e+1$), if all $b_j$'s to the right hand side of Equation \eqref{eqless} (resp. \eqref{eqmore}) are elements in $J$, then $b_{e+i}$ (resp. $b_{i-e}$) is also an element in $J$, so $(b_1,b_2,\ldots,b_{2e})$ does not belong to $ V'$. On the other hand, when $i\leq e$ (resp. $i\geq e+1$), if there exists $j\neq e+i$ (resp. $j\neq i-e$) such that $b_j\in K^\times$, then $(b_1,b_2,\ldots,b_{2e})$ belongs to $ V'$. So, to count the number of tuples $(b_1,b_2,\ldots,b_{2e})\in V'$ satisfying Equation \eqref{eq0}, it is enough to count the number of tuples $(b_1,b_2,\ldots,b_{2e})\in \underbrace{K\times K\times \cdots\times K}_{2e\text{ times}}$ satisfying Equation \eqref{eq0} such that for all $j\neq e+i$ when $i\leq e$ (resp. $j\neq i-e$ when $i\geq e+1$), $b_j$ does not belong to $J$. Moreover, Equations \eqref{eqless} and \eqref{eqmore} are satisfied by putting $b_j=a_j$ for all $j\in\{1,2,\ldots,2e\}$. So, the number of possibilities for the vertex $[b_1,b_2,\ldots,b_{2e}]$ being adjacent to $[a_1,a_2,\ldots,a_{2e}]$ such that Equation \eqref{eq0} holds is given by
		\begin{align*}
			&\dfrac{\text{the number of tuples }(b_1,b_2,\ldots,b_{2e})\in V'\text{ satisfying Equation }\eqref{eq0} }{\text{size of an equivalence class corresponding to }\sim}\\
			&-\text{ the possibility that }[b_1,b_2,\ldots,b_{2e}]=[a_1,a_2,\ldots,a_{2e}]\\
			&=\dfrac{(q^2)^{2e-1}-q^{2e-1}}{q^2-q}-1\\
			&= \dfrac{q^{4e-3}-q^{2e-2}-q+1}{q-1},
		\end{align*}
		which equals the degree of the graph $\overline{X(2e,K)}$. 
		Then, $X(2e,K)$ is regular of degree
		\begin{align*}
			&\dfrac{q^{2e-1}(q^{2e}-1)}{q-1}-1-\dfrac{q^{4e-3}-q^{2e-2}-q+1}{q-1}\\
			=&\dfrac{q^{4e-1}-q^{2e-1}-q^{4e-3}+q^{2e-2}+q-1}{q-1} - 1 \\
			=& q^{4e-2}+q^{4e-3}-q^{2e-2},
		\end{align*}
	and the proof is complete.	
	\end{proof}
	\begin{proposition}\label{prop3}
    Let $e\geq 2$ be an integer.
		Let $K$ be a finite commutative ring with identity having a unique non-trivial ideal $J=\langle r\rangle$ such that $K/J\cong \mathbb{F}_q$ for a prime power $q$. Let $e\geq 1$ be an integer. Then $Y(2e,K)$ is regular of degree $q^{4e-3}-q^{2e-2}$.
		
	\end{proposition}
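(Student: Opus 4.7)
The plan is to adapt the counting strategy of Proposition \ref{prop2}. Fix a vertex $[a_1,\ldots,a_{2e}]$, choose an index $i$ with $a_i \in K^\times$, and write $f(a,b) := (a_1,\ldots,a_{2e})M(b_1,\ldots,b_{2e})^t$. The degree of $[a]$ in $Y(2e,K)$ is the number of classes $[b]$ with $f(a,b)\in J\setminus\{0\}$, which I obtain by first counting classes with $f(a,b)\in J$ and then subtracting those with $f(a,b)=0$ (the latter already being handled by Proposition \ref{prop2}).

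For the first count, I parametrise by an arbitrary $s\in J$ and solve $f(a,b)=s$ for the coordinate $b_{e+i}$ (or $b_{i-e}$ when $i>e$) in terms of the remaining $2e-1$ coordinates and $s$, via the same rearrangement as in Equations \eqref{eqless} and \eqref{eqmore}. This yields $(q^2)^{2e-1}\cdot q = q^{4e-1}$ tuples in $K^{2e}$. Since $K$ is local with maximal ideal $J$, a tuple fails to lie in $V'$ precisely when all its entries lie in $J$; there are $q^{2e}$ such tuples, and every one of them automatically satisfies $f(a,b)\in J$ because each term $a_k b_\ell$ of $f(a,b)$ has $b_\ell\in J$ and $J$ is an ideal. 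Hence the number of $(b_1,\ldots,b_{2e})\in V'$ with $f(a,b)\in J$ equals $q^{4e-1}-q^{2e}$. The property $f(a,b)\in J$ is $\sim$-invariant because $\lambda J=J$ for every unit $\lambda$, so dividing by $|K^\times|=q^2-q$ gives $\frac{q^{2e-1}(q^{2e-1}-1)}{q-1}$ classes $[b]$ with $f(a,b)\in J$.

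Next, Proposition \ref{prop2} gives $\frac{q^{4e-3}-q^{2e-2}-q+1}{q-1}$ classes $[b]\neq[a]$ with $f(a,b)=0$; adding $1$ for $[a]$ itself (which satisfies $f(a,a)=0$, as $M$ is skew-symmetric and $K$ is commutative) yields $\frac{q^{4e-3}-q^{2e-2}}{q-1}$ classes in total with $f(a,b)=0$. Subtracting produces the degree $\frac{q^{2e-1}(q^{2e-1}-1)-(q^{4e-3}-q^{2e-2})}{q-1}$, whose numerator rewrites as $q^{4e-2}-q^{4e-3}-q^{2e-1}+q^{2e-2}=(q-1)(q^{4e-3}-q^{2e-2})$, giving the claimed value $q^{4e-3}-q^{2e-2}$.

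There is no genuine mathematical obstacle; the only thing to watch is the bookkeeping around the vertex $[a]$ itself: since $f(a,a)=0\in J$, it is counted in the total of classes with $f(a,b)\in J$, and it must therefore be included in the subtracted count of classes with $f(a,b)=0$ so that it is cancelled correctly.
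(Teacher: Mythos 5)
Your proposal is correct, but it takes a slightly different route from the paper's. The paper never invokes Proposition \ref{prop2}: it fixes a nonzero element $z_jr$ of $J$ and directly counts the tuples $b\in V'$ with $aMb^t=z_jr$ by solving for one coordinate (there are $(q^2)^{2e-1}-q^{2e-1}$ such tuples, the subtracted term being the tuples whose free entries all lie in $J$, which force the solved entry into $J$ as well), then multiplies by the $q-1$ choices of $z_jr$ and divides by $|K^\times|=q^2-q$. You instead count the entire preimage of $J$ at once ($q\cdot(q^2)^{2e-1}$ tuples, minus the $q^{2e}$ all-$J$ tuples) and remove the $aMb^t=0$ contribution by importing the complement-degree from Proposition \ref{prop2}, remembering to add the class $[a]$ itself since $aMa^t=0$. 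Both arguments rest on the same key fact — the map $b\mapsto aMb^t$ is a surjective $K$-linear form because $a$ has a unit entry, so every fiber has size $(q^2)^{2e-1}$ — and all of your arithmetic checks out, including the $\sim$-invariance of the condition $aMb^t\in J$ and the final factorisation $(q-1)(q^{4e-3}-q^{2e-2})$. What your version buys is a cleaner global treatment of the exclusion of tuples outside $V'$ and a reuse of already-proved material; what the paper's version buys is a self-contained count in which the vertex $[a]$ never enters the bookkeeping at all, since $0\notin J\setminus\{0\}$ excludes it automatically rather than requiring the cancellation you carefully arrange.
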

	\begin{proof}
		Let $K/J=\{z_1+J,z_2+J,\ldots, z_q+J\}$ where $\{z_1,z_2,\ldots,z_q\}$ is a set of coset representatives with $z_1=0$. Then $J=\{z_1r,z_2r,\ldots, z_qr\}$.	Now, let us fix a vertex $[a_1,a_2,\ldots,a_{2e}]$ of the graph $Y(2e,K)$. We find the number of possibilities of the vertex $[b_1,b_2,\ldots,b_{2e}]$ such that $[b_1,b_2,\ldots,b_{2e}]$ is adjacent to $[a_1,a_2,\ldots,a_{2e}]$ in $Y(2e,K)$, that is,
		\begin{align}\label{eq01}
			&(a_1b_{e+1}-a_{e+1}b_1) + (a_2b_{e+2}-a_{e+2}b_2) + \cdots + (a_eb_{2e}-a_{2e}b_e )=z_jr 
		\end{align}
		for some $j \in \{2,\ldots, q\}$.
		Since $(a_1,a_2,\ldots,a_{2e})\in V'$, there exists $i\in\{1,2,\ldots,2e\}$ such that $a_i\in K^\times$. If $i\leq e$, then
		\begin{align}\label{eqless1}
			b_{e+i} &= a_i^{-1}(z_jr+(a_{e+1}b_1-a_1b_{e+1}) + (a_{e+2}b_2-a_2b_{e+2}) \notag\\
			&+\cdots+ (a_{e+i-1}b_{i-1}-a_{i-1}b_{e+i-1})
			+a_{e+i}b_i + (a_{e+i+1}b_{i+1}- a_{i+1}b_{e+i+1})\notag \\
			&+ \cdots + (a_{2e}b_e- a_eb_{2e} )),
		\end{align}
		and if $i\geq e+1$, then
		\begin{align}\label{eqmore1}
			b_{i-e} &= a_i^{-1}((a_{1}b_{e+1}-a_{e+1}b_{1}) + (a_{2}b_{e+2}-a_{e+2}b_{2}) \notag\\
			&+\cdots+ (a_{i-1-e}b_{i-1}-a_{i-1}b_{i-1-e})
			+a_{i-e}b_i + (a_{i+1-e}b_{i+1}- a_{i+1}b_{i+1-e})\notag \\
			&+ \cdots + (a_{e}b_{2e}- a_{2e}b_{e} )-z_jr).
		\end{align}
		This shows that there are $2e-1$ choices of $b_j$'s possible. Since $(b_1,b_2,\ldots,b_{2e})\in V'$, it must have an invertible entry. When $i\leq e$ (resp. $i\geq e+1$), if all $b_j$'s to the right hand side of Equation \eqref{eqless1} (resp. \eqref{eqmore1}) are elements in $J$, then $b_{e+i}$ (resp. $b_{i-e}$) is also an element in $J$, so $(b_1,b_2,\ldots,b_{2e})\notin V'$. On the other hand, when $i\leq e$ (resp. $i\geq e+1$), if there exists $j\neq e+i$ (resp. $j\neq i-e$) such that $b_j\in K^\times$, then $(b_1,b_2,\ldots,b_{2e})\in V'$. So, to count the number of tuples $(b_1,b_2,\ldots,b_{2e})\in V'$ satisfying Equation \eqref{eq01}, it is enough to count the number of tuples $(b_1,b_2,\ldots,b_{2e})\in \underbrace{K\times K\times \cdots\times K}_{2e\text{ times}}$ satisfying Equation \eqref{eq01} such that for all $j\neq e+i$ when $i\leq e$ (resp. $j\neq i-e$ when $i\geq e+1$), $b_j$ does not belong to $J$.
		Therefore, the number of possibilities for the vertex $[b_1,b_2,\ldots,b_{2e}]$ being adjacent to $[a_1,a_2,\ldots,a_{2e}]$ such that Equation \eqref{eq01} holds is given by
		\begin{align*}
			&\dfrac{(\text{the number of choices of }k)\times	(	\text{the number of tuples }(b_1,b_2,\ldots,b_{2e})\in V'\text{ satisfying Equation }\eqref{eq01})}{\text{size of an equivalence class corresponding to }\sim}\\
			&=\dfrac{(q-1)((q^2)^{2e-1}-q^{2e-1})}{q^2-q}\\
			&= q^{4e-3}-q^{2e-2}.
		\end{align*}  
		This completes the proof.
	\end{proof}
	
	\subsection{The canonical partition of the graphs $X(2e,K)$ and $Y(2e,K)$}
In this section, we describe the canonical partition of the graphs $X(2e,K)$ and $Y(2e,K)$.

	We recall that $J=\langle r\rangle$ is the unique non-trivial ideal of $K$ such that $|J|=q$ and $|K|=q^2$ for some prime power $q$, where $K/J=\{z_1+J,z_2+J,\ldots, z_q+J\}$ and $\{z_1,z_2,\ldots,z_q\}$ is a set of coset representatives with $z_1=0$. Put $T:=\underbrace{J \times J \times \ldots \times J}_{2e \text{ times}}$. Then $T$ is a vector space over $K/J$ of dimension $2e$, with the  scalar multiplication defined as,
	$$(z_{j}+J).(x_1,x_2,\ldots,x_{2e})=(z_jx_1,z_j x_2,\ldots,z_jx_{2e}),$$ where $j\in\{1,2,\ldots,q\}$ and $x_1,x_2,\ldots,x_{2e}\in J$ (the operation is well defined due to Lemma \ref{lemij=0}). 
	We say that a $(2e-1)$-dimensional subspace of $T$ is a \emph{hyperplane}.

Let $x$ and $y$ be elements of $J$ such that $x\neq 0, y\neq 0$. We observe that by Lemma \ref{lemjr}, $xu\in H$ if and only if $yu\in H$. This means that the statement of the following lemma does not depend on the choice of the generator $r$ of the ideal $J$.
    
	\begin{lemma}\label{lem:pu}
	Let $H$ be a hyperplane of $T$ and let $u\in V'$. Then the following statements hold.\\
		{\rm (1)} If $ru \in H$, then $|\{[u+h] : h \in H\}| = |H|/q$.\\
		{\rm (2)} If $ru \notin H$, then $|\{[u+h] : h \in H\}| = |H|$.\\
	\end{lemma}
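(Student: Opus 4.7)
The plan is to characterise equality of classes $[u+h_1] = [u+h_2]$ for $h_1, h_2 \in H$ and then reduce the count to a $(K/J)$-linear problem inside the one-dimensional subspace $Ju \subseteq T$.

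First I would observe that $u + h \in V'$ for every $h \in H$, since $h \in J^{2e}$ and so the unit entry of $u$ remains a unit after adding $h$. Next, suppose $u + h_1 = \lambda(u + h_2)$ for some $\lambda \in K^\times$. This rearranges to $(1 - \lambda) u = \lambda h_2 - h_1$, whose right-hand side lies in $T = J^{2e}$ because $J$ is an ideal. Looking at a coordinate where $u$ has a unit forces $1 - \lambda \in J$. Writing $\nu = 1 - \lambda \in J$ and invoking Lemma \ref{lemij=0} to kill $\nu h_2$, the equation collapses to $h_2 - h_1 = \nu u$. Conversely, given any $\nu \in J$ with $h_2 - h_1 = \nu u$, the element $\lambda := 1 - \nu$ is a unit (as $K^\times = K \setminus J$ and $1 \notin J$) and $\lambda(u + h_2) = u + h_1$ again by Lemma \ref{lemij=0}. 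Hence $[u+h_1] = [u+h_2]$ iff $h_2 - h_1 \in Ju := \{\nu u : \nu \in J\}$, and since $h_1, h_2 \in H$, this occurs iff $h_2 - h_1 \in Ju \cap H$. Thus
\[
|\{[u+h] : h \in H\}| = \frac{|H|}{|Ju \cap H|}.
\]

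Finally, I would analyse $Ju$ as a subspace of $T$. By Lemma \ref{lemjr}, $J = (K/J) \cdot r$, so $Ju$ is the $(K/J)$-span of $ru$ in $T$. Since $u$ has a unit entry $a_i$ and $r \neq 0$, the entry $r a_i$ is a nonzero element of $J$, so $ru \neq 0$ and $Ju$ is a one-dimensional $(K/J)$-subspace of $T$, of size $q$. If $ru \in H$ then $Ju \subseteq H$, so $|Ju \cap H| = q$ and the count is $|H|/q$; if $ru \notin H$ then $Ju \cap H$ is a proper subspace of the one-dimensional space $Ju$ and hence trivial, giving count $|H|$.

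The main obstacle is the reduction forcing $\lambda \in 1 + J$, which rests on the interaction between the unit entry of $u$ and the characterisation $K^\times = K \setminus J$; once this is in hand, everything afterwards is elementary linear algebra over $K/J \cong \mathbb{F}_q$.
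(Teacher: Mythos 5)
Your proof is correct and follows essentially the same route as the paper: the key step in both is that a unit entry of $u$ forces $\lambda\equiv 1\pmod J$, after which Lemma \ref{lemij=0} collapses the relation to $h_2-h_1=\nu u$ with $\nu\in J$, so everything hinges on whether $ru$ lies in $H$. The only difference is cosmetic packaging --- you count the classes as cosets of the subspace $Ju\cap H$ in $H$, while the paper fixes $h$ and lists the $q$ (or single) solutions $(\sigma,f)=(1+z_jr,\,h-z_jru)$ directly --- and both are complete.
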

	\begin{proof}
		 It suffices to show that, given $h \in H$, the equation
		\begin{equation}\label{eq:cond1}
			u+h = \sigma(u+f)    
		\end{equation}
		has exactly $q$ solutions (resp. a unique solution) in variables $\sigma \in K^\times$ and $f \in H$ whenever $ru \in H$ (resp. $ru \notin H$). 
		Equation (\ref{eq:cond1}) is equivalent to
		\begin{equation}\label{eq:cond2}
			(\sigma-1)u = h-\sigma f.    
		\end{equation}
		The right-hand side of Equation (\ref{eq:cond2}) is a $2e$-tuple with entries in $J$ and $u$ is a $2e$-tuple having at least one invertible entry. This implies that $\sigma-1$ belongs to  $J$, so $\sigma=1+z_jr$ for some $j\in\{1,2,\ldots,q\}$. Using Lemma \ref{lemij=0}, Equation (\ref{eq:cond2}) can be rewritten as
		$$
		z_jru = h - f
		$$
		for some $j\in\{1,2,\ldots,q\}$
		which is equivalent to
		\begin{equation}\label{eq:cond3}
			f =  h - z_jru.   
		\end{equation}
		Thus, if $ru \in H$, then the set of solutions $(\sigma,f)$ of Equation (\ref{eq:cond1}) is  
		$$
		\{(1+z_jr,h-z_jru) : j \in \{1,2,\ldots,q\}\}.
		$$
		Moreover, if $ru \notin H$, then $z_jru$ and, consequently, $h - z_jru$ do not belong to $H$ unless $j=1$, which means that $j=1$, so Equations (\ref{eq:cond3}) and (\ref{eq:cond1}) have a unique solution $(\sigma,f)$ given by $(1,h)$. This completes the proof.
	\end{proof}
	\begin{lemma}\label{lem:class}
		Let $H_1, H_2$ be hyperplanes of $T$ and let $u\in V'$ such that $ru \notin H_1 \cup H_2$. Then 
		$$\{[u+h] : h \in H_1\} = \{[u+f] : f \in H_2\}$$
		holds.
	\end{lemma}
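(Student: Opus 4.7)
The plan is to reduce the equality to a single inclusion by a cardinality argument, and then settle the inclusion using the $(K/J)$-vector space structure on $T$. By Lemma~\ref{lem:pu}(2), the hypothesis $ru \notin H_1 \cup H_2$ yields
\[
|\{[u+h] : h \in H_1\}| = |H_1| = q^{2e-1} = |H_2| = |\{[u+f] : f \in H_2\}|,
\]
so it is enough to prove the inclusion $\{[u+h] : h \in H_1\} \subseteq \{[u+f] : f \in H_2\}$; the reverse inclusion then follows by swapping the roles of $H_1$ and $H_2$, or equivalently by finiteness.

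For the inclusion, fix $h \in H_1$ and look for $f \in H_2$ with $[u+h] = [u+f]$. Since $H_2$ is a hyperplane of the $2e$-dimensional $(K/J)$-vector space $T$ and $ru \in T \setminus H_2$, the element $ru$ spans a one-dimensional complement of $H_2$, so $T = H_2 \oplus (K/J)\cdot (ru)$. Decomposing $h \in T$ in this direct sum produces a unique $j \in \{1,2,\ldots,q\}$ and a unique $f \in H_2$ with $h = f + z_j r u$. Set $\sigma := 1 + z_j r$: if $j=1$ then $\sigma = 1$, and otherwise $z_j r \in J \setminus \{0\}$ forces $\sigma \notin J$, so $\sigma$ is a unit of $K$ because in the local ring $K$ the unique maximal ideal $J$ coincides with the set of non-units.

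Finally, using Lemma~\ref{lemij=0} to annihilate the product $z_j r \cdot f$ (both $z_j r \in J$ and every entry of $f$ lies in $J$), one computes
\begin{align*}
\sigma(u+f) = (1+z_j r)(u+f) = u + f + z_j r u + z_j r f = u + f + z_j r u = u + h,
\end{align*}
whence $[u+h] = [u+f]$, as required. I expect the only delicate point is checking the direct sum decomposition $T = H_2 \oplus (K/J)\cdot(ru)$: one must observe that Lemma~\ref{lemij=0} makes the $(K/J)$-scalar action on $ru$ well defined (so that the line $(K/J)\cdot(ru)$ really has dimension one once $ru \neq 0$, which is implicit in $ru \notin H_2$), after which the decomposition is immediate from elementary dimension counting.
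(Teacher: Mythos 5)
Your proof is correct and takes essentially the same approach as the paper: both reduce the equality to one inclusion via Lemma~\ref{lem:pu}(2) and a cardinality count, split an arbitrary element of one hyperplane against the line spanned by $ru$, and pass between representatives using the unit $\sigma = 1+z_jr$ together with Lemma~\ref{lemij=0}. The only cosmetic difference is that you state the complement explicitly as a direct sum $T = H_2 \oplus (K/J)\cdot(ru)$, while the paper phrases it as the observation that $h - z_jru$ runs over $T \setminus H_1$ as $j$ and $h$ vary.
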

	\begin{proof}
		In view of Lemma \ref{lem:pu}(2),
		it suffices to show that for any $g \in T \setminus H_1$ there exist $\sigma \in K^\times$ and $h \in H_1$ such that 
		$$
		u+h = \sigma(u+g).
		$$
		Note that if $j$ runs over $\{2,\ldots,q\}$ and $h$ runs over $H_1$, then $(h-z_jru)$ runs over $T \setminus H_1$. Thus, we have
		$$
		g = h - z_jru
		$$
		for some uniquely determined $j \in \{2,\ldots,q\}$ and $h \in H_1$. Put $\sigma = 1+z_jr$. Then, using Lemma \ref{lemij=0}, we have 
		$$
		\sigma(u+g) = (1+z_jr)(u+h-z_jru) = u+h,
		$$
		which completes the proof.
	\end{proof}
	
	Let $H$ be a hyperplane and $u$ be a $2e$-tuple over $K$ having an invertible entry, such that $ru \notin H$. Denote by $C(H,u)$ the set $\{[u+h] : h \in H\}$. We note that, for any $u_1\in V'$ such that $ru_1\notin H$ and $[u_1]\in C(H,u)$, the equality $C(H,u_1)=C(H,u)$ holds.
	We also use the \emph{Gaussian coefficient} $\qbin{n}{i}{q}$ to denote the number of $i$-dimensional subspaces of an $n$-dimensional vector space over $\F_q$.

	\begin{proposition}\label{prop:CHu}
	Let $H_1, H_2$ be hyperplanes of $T$ and let $u_1\in V'$, $u_2\in V'$ such that $ru_1\notin H_1, ru_2\notin H_2$. If the sets $C(H_1,u_1)$ and $C(H_2,u_2)$ have a vertex in common, then they coincide.
	\end{proposition}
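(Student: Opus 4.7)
The strategy is to reduce to a situation where Lemma \ref{lem:class} applies directly, by transferring both sets $C(H_1,u_1)$ and $C(H_2,u_2)$ to a common base point. First I would pick a common vertex $[v] \in C(H_1,u_1) \cap C(H_2,u_2)$ and fix a representative $v \in V'$. The goal is then to show that $rv \notin H_1 \cup H_2$, so that $v$ is a legitimate base point for both constructions.

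The key calculation is the following. Since $[v] \in C(H_1,u_1)$, I can write $v = \sigma(u_1 + h_1)$ for some $\sigma \in K^\times$ and $h_1 \in H_1$. Multiplying by $r$ gives
\begin{align*}
rv = \sigma r u_1 + \sigma r h_1 = \sigma r u_1,
\end{align*}
where the term $\sigma r h_1$ vanishes because $h_1 \in T = J^{2e}$ and products in $J$ are zero by Lemma \ref{lemij=0}. Because $\sigma \in K^\times$, its coset $\sigma + J$ is a nonzero element of the field $K/J$, hence a unit in $K/J$; since $H_1$ is a $K/J$-subspace of $T$, the membership $\sigma r u_1 \in H_1$ is equivalent to $ru_1 \in H_1$, which fails by hypothesis. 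Thus $rv \notin H_1$, and the same argument with $u_2$, $h_2$, $H_2$ yields $rv \notin H_2$.

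With this in hand, the conclusion is a two-line chain: the remark following Lemma \ref{lem:class} (observing that $C(H,\cdot)$ depends only on the equivalence class of its second argument, provided $r \cdot (\text{class}) \notin H$) gives $C(H_1,v) = C(H_1,u_1)$ and $C(H_2,v) = C(H_2,u_2)$, and Lemma \ref{lem:class} applied to the hyperplanes $H_1,H_2$ with the common base point $v$ (where $rv \notin H_1 \cup H_2$) gives $C(H_1,v) = C(H_2,v)$. Chaining these equalities yields $C(H_1,u_1) = C(H_2,u_2)$. There is no real obstacle here; the only step that requires care is the verification that $rv \notin H_i$, which is a routine manipulation using Lemma \ref{lemij=0} together with the fact that $K/J$ is a field.
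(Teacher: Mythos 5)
Your argument is essentially the paper's: both proofs transfer everything to the common vertex and then finish with Lemma \ref{lem:class}. Your computation $rv=\sigma r u_1$ (via Lemma \ref{lemij=0}), combined with the observation that the unit $\sigma$ acts on $T$ as the nonzero scalar $\sigma+J$ of $K/J$, so that $\sigma r u_1\in H_1$ if and only if $ru_1\in H_1$, is correct and is in fact a slicker verification than the paper's, which instead replaces $H_1$ by $\sigma_1^{-1}H_1$, argues that this is again a hyperplane, and checks $ru\notin\sigma_1^{-1}H_1$ by contradiction. The one step you should not leave to a citation is $C(H_i,v)=C(H_i,u_i)$: you justify it by the unproved remark following the definition of $C(H,u)$, but that remark is exactly the special case $H_1=H_2$ of Proposition \ref{prop:CHu}, so quoting it here is circular unless you prove it. Fortunately your own key observation closes this in one line: from $v=\sigma(u_1+h_1)$ one gets $u_1=\sigma^{-1}v-h_1$, hence
$C(H_1,u_1)=\{[\sigma^{-1}v-h_1+h]:h\in H_1\}=\{[v+\sigma(h-h_1)]:h\in H_1\}=\{[v+g]:g\in H_1\}=C(H_1,v)$,
because multiplication by the unit $\sigma$ is the action of the nonzero scalar $\sigma+J$ on the $K/J$-space $T$ and therefore maps $H_1$ onto itself. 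With that line inserted your proof is complete and coincides in substance with the paper's; the only real difference is bookkeeping, namely that the paper transports the hyperplanes to the common base point (working with $\sigma_i^{-1}H_i$), while you keep the hyperplanes fixed and transport the base points.
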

	\begin{proof}
		Let the two sets $C(H_1,u_1)$ and $C(H_2,u_2)$ contain a common vertex $[u]$. This means there exist $h_1 \in H_1$ and $h_2 \in H_2$ such that $[u] = [u_1+h_1] = [u_2+h_2]$. That is, there exist $\sigma_1,\sigma_2 \in K^\times$ such that $\sigma_1u = u_1+h_1$ and $\sigma_2u = u_2+h_2$. This implies $u_1 = \sigma_1u - h_1$ and $u_2 = \sigma_2u - h_2$.
		Thus, we have
		\begin{align*}
			C(H_1,u_1) &= \{[u_1+h] : h \in H_1\}\\ 
			&= \{[\sigma_1u - h_1+h] : h \in H_1\}\\ 
			&= \{[\sigma_1u + h] : h \in H_1\}\\
			&= \{[u + \sigma_1^{-1}h] : h \in H_1\}\\
			&= \{[u + h] : h \in \sigma_1^{-1}H_1\}. 
		\end{align*}
		Note that $\sigma_1^{-1}H_1$ is a hyperplane as it is a subgroup of order $q^{2e-1}$ in $T$. Also, $ru$ does not belong to $\sigma_1^{-1}H_1$. Indeed, suppose $ru$ belongs to $\sigma_1^{-1}H_1$. Then there exists $f \in H_1$ such that
		$ru = \sigma_1^{-1}f$ and $f = \sigma_1ru$. The condition $u_1 = \sigma_1u - h_1$ then implies $ru_1 = \sigma_1ru = f$, that is, $ru_1$ belongs to $H_1$, a contradiction.
		Thus, we conclude that 
		$$
		C(H_1,u_1) = C(\sigma_1^{-1}H_1,u).
		$$
		In a similar way, we have that $\sigma_2^{-1}H_2$ is a hyperplane, $ru$ does not belong to $\sigma_2^{-1}H_2$ and 
		$$
		C(H_2,u_2) = C(\sigma_2^{-1}H_2,u).
		$$
		By Lemma \ref{lem:class}, we have $C(\sigma_1^{-1}H_1,u) = C(\sigma_2^{-1}H_2,u)$, which finally means $C(H_1,u_1) = C(H_2,u_2)$. This completes the proof. 
	\end{proof}
	\begin{corollary}\label{cor:Partition}
There exists a uniquely determined partition of the vertex sets of $X(2e,K)$ and $Y(2e,K)$ into classes $C(H,u)$. Moreover, the size of each class is $q^{2e-1}$ and the number of such classes in the partition is the Gaussian coefficient $\qbin{2e}{2e-1}{q} = \frac{q^{2e}-1}{q-1}$.
	\end{corollary}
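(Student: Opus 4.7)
The plan is to assemble the corollary from Lemma~\ref{lem:pu} and Proposition~\ref{prop:CHu}, which have already done all the heavy lifting. I first show that the classes $C(H,u)$ exist and cover every vertex, then use Proposition~\ref{prop:CHu} to obtain disjointness, and finally count.

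The first step is to check that for any $u \in V'$ one can always choose a hyperplane $H$ of $T$ with $ru \notin H$. For this I would note that $u$ has some invertible entry $a_j$, so $ru$ has the entry $r a_j$; since $r\neq 0$ generates $J$ and $a_j$ is a unit, $r a_j \neq 0$, hence $ru$ is a nonzero vector in the $2e$-dimensional $K/J$-vector space $T$. Consequently, not every hyperplane of $T$ contains $ru$: concretely, of the $\qbin{2e}{2e-1}{q}$ hyperplanes, exactly $\qbin{2e-1}{2e-2}{q}$ contain $ru$, leaving $q^{2e-1}$ hyperplanes avoiding $ru$, in particular at least one. Thus for every vertex $[u]$, at least one class $C(H,u)$ is well defined, and $[u] \in C(H,u)$ (take $h = 0$).

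Next, I would argue that this class depends only on $[u]$, not on the particular choice of $H$ (nor of representative $u$). If $H_1, H_2$ are two hyperplanes with $ru \notin H_1 \cup H_2$, the classes $C(H_1,u)$ and $C(H_2,u)$ share the vertex $[u]$, so Proposition~\ref{prop:CHu} forces $C(H_1,u) = C(H_2,u)$. For representatives: if $u' = \lambda u$ with $\lambda \in K^\times$, then $ru' = \lambda ru$, and one checks directly that $\lambda ru \notin H$ iff $ru \notin H$ (since $\lambda$ is a unit and $H$ is a $K/J$-subspace), and $C(H,u) = C(H,u')$ because $\{u + h : h \in H\}$ and $\{\lambda u + h : h \in H\} = \lambda\{u + \lambda^{-1}h : h \in H\}$ yield the same equivalence classes. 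Hence each vertex determines a unique class, and this class is $C(H,u)$ for any admissible pair $(H,u)$.

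For the partition property, Proposition~\ref{prop:CHu} gives directly that two classes either coincide or are disjoint, so the classes $C(H,u)$ form a partition of the vertex set; by the previous paragraph this partition is uniquely determined by the graph's vertex set (independently of how one labels the classes by pairs $(H,u)$). The class size is computed in Lemma~\ref{lem:pu}(2): whenever $ru \notin H$, we have $|C(H,u)| = |H| = q^{2e-1}$. Finally, dividing the total number of vertices (Proposition~\ref{prop1}) by the class size gives the number of classes,
\[
\frac{1}{q^{2e-1}} \cdot \frac{q^{2e-1}(q^{2e}-1)}{q-1} = \frac{q^{2e}-1}{q-1} = \qbin{2e}{2e-1}{q},
\]
completing the proof. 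The only subtle point is verifying $ru \neq 0$ and independence of the class under the choice of representative $u$ and of admissible hyperplane $H$; once these are in hand, everything else is immediate from the results already established.
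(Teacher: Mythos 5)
Your proposal is correct and takes essentially the same route as the paper, whose proof of Corollary~\ref{cor:Partition} simply invokes Proposition~\ref{prop:CHu} and Proposition~\ref{prop1} (with the class size coming from Lemma~\ref{lem:pu}); your extra verifications (that some hyperplane avoids $ru$ for every $u\in V'$, and that the class is independent of the chosen representative and admissible hyperplane) just make explicit details the paper leaves implicit.
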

	\begin{proof}
		This follows from Proposition \ref{prop:CHu} and Proposition \ref{prop1}.
	\end{proof}
	We denote the partition from Corollary \ref{cor:Partition} by $\Pi(2e,K)$.

\subsection{The numbers of common neighbours of two vertices in the graph $X(2e,K)$}
In this section, we determine the numbers of common neighbours of two vertices in the graph $X(2e,K)$.
	\begin{proposition}\label{propmain1}
	Let $C(H,u)$ be a class of the partition $\Pi(2e,K)$, where $H$ is a hyperplane of $T$ and $u\in V'$ such that $ru\notin H$. Then any two distinct vertices from $C(H,u)$ non-adjacent in $\overline{X(2e,K)}$ have $\dfrac{(q^{2e-2}-1)q^{2e-2}}{q-1}$ common neighbours in $\overline{X(2e,K)}$, and any two vertices from $C(H,u)$ adjacent in $\overline{X(2e,K)}$ have $\dfrac{(q^{2e-2}-1)q^{2e-2}}{q-1} - 2$ common neighbours in $\overline{X(2e,K)}$.     
	\end{proposition}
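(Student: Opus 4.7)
The plan is to fix representatives $a=u+h_1$ and $b=u+h_2$ with $h_1,h_2\in H$ (possible since $[a],[b]\in C(H,u)$) and set $h:=h_1-h_2\in H$; by Lemma~\ref{lem:pu}(2) the map $h'\mapsto [u+h']$ is injective on $H$, so $h\ne 0$ whenever $[a]\ne [b]$. A vertex $[c]$ is a common neighbour of $[a]$ and $[b]$ in $\overline{X(2e,K)}$ exactly when $aMc^t=bMc^t=0$ and $[c]\notin\{[a],[b]\}$, which after subtraction is equivalent to the system $aMc^t=0=hMc^t$ together with $[c]\notin\{[a],[b]\}$.

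Since $M$ is skew-symmetric, $aMa^t=bMb^t=0$, so $[a]$ lies in
$$N^*:=\{[c]\in V : aMc^t=hMc^t=0\}$$
iff $bMa^t=0$, i.e., iff $[a]$ and $[b]$ are adjacent in $\overline{X(2e,K)}$; the same applies to $[b]$. Hence the number of common neighbours equals $|N^*|-2$ when $[a],[b]$ are adjacent in $\overline{X(2e,K)}$ and $|N^*|$ otherwise, so it remains to prove that $|N^*|=q^{2e-2}(q^{2e-2}-1)/(q-1)$.

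To compute $|N^*|$, we would count the tuples $c\in V'$ satisfying the two equations and divide by $|K^\times|=q^2-q$. The key object is the $2\times 2e$ matrix $A$ with rows $aM$ and $hM$: its first row has a unit entry (since $a$ does and $M$ is invertible), while its second row lies in $J^{2e}$ (since $h\in J^{2e}$ and $M$ has entries in $\{0,\pm 1\}$). The central step is to show that the Smith normal form of $A$ over the local ring $K$ has invariant factors $(1,r)$: the first is $1$, and the second equals $r$ precisely when the ideal of $2\times 2$ minors of $A$ equals $J$. The main obstacle, and the place where the specific ring structure is used, is ruling out that all $2\times 2$ minors vanish. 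If they did, one could write $h=\sigma a$ for some $\sigma\in K$, and since $a$ has a unit entry while $h\in J^{2e}$ one must have $\sigma\in J$, say $\sigma=rt$; Lemma~\ref{lemij=0} then kills $\sigma h_1$, yielding $h=rtu$. The conditions $h\in H$ and $ru\notin H$, combined with the one-dimensionality of $T/H$ over $\F_q$, would force $t\in J$, hence $rt\in J^2=0$, so $h=0$, contradicting $[a]\ne[b]$.

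Once the Smith form is established, writing $c'$ for the transformed coordinate vector, the kernel of $A$ is described by $c'_1=0$ and $c'_2\in\mathrm{Ann}(r)=J$ with the remaining $2e-2$ coordinates free in $K$, which gives $|S|:=|\{c\in K^{2e} : Ac^t=0\}|=1\cdot q\cdot q^{4e-4}=q^{4e-3}$. Imposing additionally the condition $c\in J^{2e}$, which is preserved by the invertible $K$-linear change of basis, restricts the remaining coordinates to $J$ and yields $|S\cap J^{2e}|=q^{2e-1}$. Dividing $|S\cap V'|=q^{4e-3}-q^{2e-1}=q^{2e-1}(q^{2e-2}-1)$ by $q^2-q=q(q-1)$ produces exactly $q^{2e-2}(q^{2e-2}-1)/(q-1)$, completing the plan.
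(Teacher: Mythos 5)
Your proposal is correct, and its skeleton matches the paper's (replace the pair of adjacency conditions by $aMc^t=0=hMc^t$, count admissible tuples, divide by $|K^\times|$, and account for the possible membership of $[a],[b]$ themselves), but the central counting step is done by a genuinely different method. The paper writes every entry as $s+tr$ via coset representatives, splits each $K$-equation into two $\F_q$-congruences, shows the two congruences in the $s$-variables have rank $2$ (otherwise $ra=\delta h\in H$), and counts solutions of the resulting $\F_q$-linear system, getting $(q^{2e-2}-1)q^{2e-1}$ tuples; it also obtains the ``$-2$'' by observing that the tuples of $[a]$ and $[a+h]$ were counted among the solutions. You instead compute the kernel of the $2\times 2e$ matrix with rows $aM, hM$ over the chain ring $K$ via its Smith normal form with invariant factors $(1,r)$, giving $|S|=q^{4e-3}$ and $|S\cap J^{2e}|=q^{2e-1}$, hence the same tuple count $q^{2e-1}(q^{2e-2}-1)$; and you justify the ``$-2$'' more transparently through the alternating form ($[a]\in N^*$ iff $aMb^t=0$). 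Your nondegeneracy argument (all $2\times2$ minors zero forces $h=\sigma a$ with $\sigma\in J$, hence $h=z_j(ru)$, contradicting $ru\notin H$ unless $h=0$) is the exact counterpart of the paper's independence-of-congruences step, just phrased over $K$ instead of modulo $J$. Two small points you should make explicit in a full write-up: the deduction $hM=\sigma\, aM$ from the vanishing of the minors uses the unit entry of $aM$ (over a ring with zero divisors proportionality is not automatic), and the existence of the Smith form over $K$ should be justified, which here is elementary — pivot on the unit entry of the first row to clear it, note the second row stays in $J^{2e}$, and use that every nonzero element of $J$ is a unit multiple of $r$; equivalently, invoke that determinantal ideals are equivalence invariants, as you do. The trade-off is that the paper's coordinate computation is reused almost verbatim for $Y(2e,K)$ in Proposition~\ref{prop:YTwoVerticesFromTheSameClass}, while your module-theoretic count is shorter and more conceptual for the present statement.
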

	\begin{proof}
		Let $[a],[b]$ be two distinct vertices from $C(H,u)$.
		Without loss of generality, we may assume $a = u$ and $b = u+h$ for some $h \in H$ where $h$ is not identically $0$.
		
		Let $[x]$ be a vertex of $\overline{X(2e,K)}$ that is a common neigbour of $[a]$ and $[b]$. Then we have
		\begin{equation}\label{eq:condition1}
			(a_1x_{e+1}-a_{e+1}x_1) +  (a_2x_{e+2}-a_{e+2}x_2) + \dots + (a_ex_{2e}-a_{2e}x_e) = 0  
		\end{equation}
		and
		\begin{equation}\label{eq:condition2}
			(b_1x_{e+1}-b_{e+1}x_1) +  (b_2x_{e+2}-b_{e+2}x_2) + \dots + (b_ex_{2e}-b_{2e}x_e) = 0.
		\end{equation}
		Since $b = a + h$, Equation (\ref{eq:condition2}) implies
		\begin{align}\label{eq:condition7}
			&(a_1x_{e+1}-a_{e+1}x_1) +  (a_2x_{e+2}-a_{e+2}x_2) + \dots + (a_ex_{2e}-a_{2e}x_e) + \notag\\
			& (h_1x_{e+1}-h_{e+1}x_1) +  (h_2x_{e+2}-h_{e+2}x_2) + \dots + (h_ex_{2e}-h_{2e}x_e) = 0
		\end{align}
		and, consequently,
		\begin{align}\label{eq:condition8}
			(h_1x_{e+1}-h_{e+1}x_1) +  (h_2x_{e+2}-h_{e+2}x_2) + \dots + (h_ex_{2e}-h_{2e}x_e) = 0
		\end{align}
		
		A vertex $[x]$ is a common neighbour of $[a]$ and $[b]$ if and only if $x$ satisfies the system of Equations (\ref{eq:condition1}) and (\ref{eq:condition8}). Thus, it suffices to count such $2e$-tuples $(x_1,x_2,\ldots,x_{2e})$.
		Using Lemma \ref{lemjr}, we assume that for every $j \in \{1,2,\ldots,2e\}$, $a_j = c_j+d_jr$ and $x_j = s_j+t_jr$, where $c_j,d_j,s_j,t_j$ are from $\{z_1,z_2,\ldots,z_q\}$, $c_j,d_j$ are constants and $s_j,t_j$ are variables.
		Equation (\ref{eq:condition1}) is then equivalent to
		\begin{align}\label{eq:condition9}
			& (c_1+d_1r)(s_{e+1}+t_{e+1}r) - (c_{e+1}+d_{e+1}r)(s_1+d_1r) +\notag\\
			& (c_2+d_2r)(s_{e+2}+t_{e+2}r) - (c_{e+2}+d_{e+2}r)(s_2+d_2r) + \dots +\notag\\
			& (c_e+d_er)(s_{2e}+t_{2e}r) - (c_{2e}+d_{2e}r)(s_e+d_er) = 0,
		\end{align}
		and using Lemma \ref{lemij=0}, Equation \eqref{eq:condition9} is equivalent to 
		\begin{align}\label{eq:condition10}
			& (c_1s_{e+1} + (c_1t_{e+1}+d_1s_{e+1})r) - (c_{e+1}s_1 + (c_{e+1}t_1+d_{e+1}s_1)r) +\notag\\
			& (c_2s_{e+2} + (c_2t_{e+2}+d_2s_{e+2})r) - (c_{e+2}s_2 + (c_{e+2}t_2+d_{e+2}s_2)r) + \dots +\notag\\
			& (c_es_{2e} + (c_et_{2e}+d_es_{2e})r) - (c_{2e}s_e + (c_{2e}t_e+d_{2e}s_e)r) = 0,
		\end{align}
		which we rewrite as
		\begin{align}\label{eq:group}
			& (c_1s_{e+1} - c_{e+1}s_1)  + (c_2s_{e+2}-c_{e+2}s_2) + \cdots + (c_es_{2e}-c_{2e}s_e) + \notag \\
			&\big((c_1t_{e+1}+d_1s_{e+1}) - (c_{e+1}t_1+d_{e+1}s_1)  + (c_2t_{e+2}+d_2s_{e+2})-(c_{e+2}t_2+d_{e+2}s_2) + \dots + \notag\\ & (c_et_{2e}+d_es_{2e})-(c_{2e}t_e+d_{2e}s_e)\big)r = 0.
		\end{align}
		Let 
		\begin{align}\label{eqfirst}
			(c_1s_{e+1} - c_{e+1}s_1)  + (c_2s_{e+2}-c_{e+2}s_2) + \cdots + (c_es_{2e}-c_{2e}s_e) = z_{j_1}+z_{j_2}r
		\end{align} for some $j_1,j_2\in\{1,2,\ldots,q\}$. Then, we deduce that Equation \ref{eq:group} is equivalent to the system of the following two linear equations modulo $J$:
		\begin{align}\label{eq:condition11}
			(c_1s_{e+1} - c_{e+1}s_1)  + (c_2s_{e+2}-c_{e+2}s_2) + \cdots + (c_es_{2e}-c_{2e}s_e) \equiv  0 \pmod J
		\end{align}
		and
		\begin{align}\label{eq:condition12}
			z_{j_2} + \bigg(\big((c_1t_{e+1}+d_1s_{e+1}) - (c_{e+1}t_1+d_{e+1}s_1) \big) + & \big((c_2t_{e+2}+d_2s_{e+2})-(c_{e+2}t_2+d_{e+2}s_2)\big) + \dots + \notag\\ & \big((c_et_{2e}+d_es_{2e})-(c_{2e}t_e+d_{2e}s_e)\big)\bigg) \equiv  0 \pmod J.
		\end{align}
		Indeed, let
		\begin{align*} E:=\big((c_1t_{e+1}+d_1s_{e+1}) - (c_{e+1}t_1+d_{e+1}s_1)\big)  + & \big((c_2t_{e+2}+d_2s_{e+2})-(c_{e+2}t_2+d_{e+2}s_2) \big)+ \dots + \notag\\ & \big((c_et_{2e}+d_es_{2e})-(c_{2e}t_e+d_{2e}s_e)\big),
		\end{align*}
		and let 
		\begin{align}\label{eqE}
			z_{j_2}+E=z_{j_3}+z_{j_4}r
		\end{align}
		for some $j_3,j_4\in\{1,2,\ldots,q\}$. Let Equation \eqref{eq:group} hold. Using Equation \eqref{eqfirst}, we obtain that  
		$(z_{j_1}+z_{j_2}r)+Er=0$, and using Lemma \ref{lemij=0} and Equation \eqref{eqE}, this implies that $z_{j_1}+z_{j_3}r=0$. So, Lemma \ref{lempart2} implies $z_{j_1}=z_{j_3}=0$, and this implies Equations \eqref{eq:condition11} and \eqref{eq:condition12}. Conversely, Equations \eqref{eq:condition11} and \eqref{eq:condition12} imply $z_{j_1}=z_{j_3}=0$, so the left hand side of Equation \eqref{eq:group} yields $(z_{j_1}+z_{j_2}r)+Er=(z_{j_2}+E)r=z_{j_3}r=0$. Now, we find the number of solutions of Equations \eqref{eq:condition8}, \eqref{eq:condition11} and \eqref{eq:condition12}.
		Since the $2e$-tuple $a$ has an invertible entry, there exists $i \in \{1,2,\ldots,2e\}$ such that $c_i \notin J$. Assume that $i$ belongs to $\{1,2,\ldots,e\}$ (otherwise, the proof is similar). Then Equation (\ref{eq:condition12}) is equivalent to
		\begin{align}\label{eq:condition13}
			t_{e+i} \equiv 
			&~c_{i}^{-1}\bigg(-z_{j_2}+
			\big((c_{e+1}t_1+d_{e+1}s_1) - (c_1t_{e+1}+d_1s_{e+1})\big) +\\
			&~\big((c_{e+2}t_2+d_{e+2}s_2) - (c_2t_{e+2}+d_2s_{e+2})\big)+  \dots + \notag\\
			&~\big((c_{e+i-1}t_{i-1}+d_{e+i-1}s_{i-1}) - (c_{i-1}t_{e+i-1}+d_{i-1}s_{e+i-1})\big) + \notag\\
			&~\big((c_{e+i}t_i+d_{e+i}s_i) - d_is_{e+i}\big) + \notag\\
			&~\big((c_{e+i+1}t_{i+1}+d_{e+i+1}s_{i+1}) - (c_{i+1}t_{e+i+1}+d_{i+1}s_{e+i+1})\big) + \dots + \notag\\
			&~\big((c_{2e}t_e+d_{2e}s_e) - (c_et_{2e}+d_es_{2e})\big)\bigg) \pmod J   
		\end{align}
		
		On the other hand, for every $j \in \{1,2,\ldots,2e\}$, let $h_j = w_jr$, where $w_j$ belongs to $\{z_1,z_2,\ldots,z_q\}$ (this is due to Lemma \ref{lempart1}). Then Equation (\ref{eq:condition8}) is equivalent to 
		\begin{align}\label{eq:condition14}
			\big(w_1r(s_{e+1}+t_{e+1}r)-w_{e+1}r(s_1+t_1r)\big) + 
			& \big(w_2r(s_{e+2}+t_{e+2}r)-w_{e+2}r(s_2+t_2r)\big) + \dots + \notag\\ & \big(w_er(s_{2e}+t_{2e}r)-w_{2e}r(s_e+t_er)\big) = 0,    
		\end{align}
		which is equivalent to
		\begin{align}\label{eq:condition15}
			(w_1s_{e+1}-w_{e+1}s_1) + 
			(w_2s_{e+2}-w_{e+2}s_2) + \dots + (w_es_{2e}-w_{2e}s_e) \equiv 0 \pmod J.    
		\end{align}
		Thus, the system of Equations (\ref{eq:condition1}) and (\ref{eq:condition8}) we need to solve is equivalent to the system of Equations (\ref{eq:condition11}), (\ref{eq:condition13}) and (\ref{eq:condition15}). 
		
		Let us show that the system of Equations (\ref{eq:condition11}) and (\ref{eq:condition15}) has rank $2e-2$. Since the $2e$-tuple $a$ has an invertible entry and the $2e$-tuple $h$ is not identically $0$, each of Equations (\ref{eq:condition11}) and (\ref{eq:condition15}) is non-trivial. Let us show that Equations (\ref{eq:condition11}) and (\ref{eq:condition15}) are linearly independent. Suppose to the contrary they are dependent, that is, there exists $\delta \in \{z_2,\ldots,z_q\}$ such that, for every $j \in \{1,2,\ldots,2e\}$, we have 
		\begin{equation}\label{eq:condition16}
			c_j \equiv \delta w_j \pmod J.    
		\end{equation}
		This implies
		$$ra = (c_1r,c_2r,\ldots,c_{2e}r) = \delta(w_1r,w_2r,\ldots,w_{2e}r) = \delta h$$
	belongs to $H$,	a contradiction. Thus, there exist $\ell, k$ such that $1 \le \ell < k \le 2e$ holds and the system of Equations (\ref{eq:condition11}) and (\ref{eq:condition15}) is equivalent to the system of equations
		\begin{equation}\label{eq:condition17}
			s_{\ell} \equiv M_{\ell}(s_1,s_2,\ldots,s_{\ell-1},s_{\ell+1},\ldots,s_{k-1},s_{k+1},\ldots,s_{2e}) \pmod J    
		\end{equation}
		and
		\begin{equation}\label{eq:condition18}
			s_{k} \equiv M_k(s_1,s_2,\ldots,s_{\ell-1},s_{\ell+1},\ldots,s_{k-1},s_{k+1},\ldots,s_{2e}) \pmod J,    
		\end{equation}
		where $M_{\ell}(s_1,s_2,\ldots,s_{\ell-1},s_{\ell+1},\ldots,s_{k-1},s_{k+1},\ldots,s_{2e})$ and $M_k(s_1,s_2,\ldots,s_{\ell-1},s_{\ell+1},\ldots,s_{k-1},s_{k+1},\ldots,s_{2e})$
		are linear combinations of the $2e-2$ variables $s_1,s_2,\ldots,s_{\ell-1},s_{\ell+1},\ldots,s_{k-1},s_{k+1},\ldots,s_{2e}$. Thus, the system of Equations (\ref{eq:condition11}), (\ref{eq:condition13}) and (\ref{eq:condition15}) is equivalent to the system of Equations (\ref{eq:condition17}), (\ref{eq:condition18}) and (\ref{eq:condition13}).
		Note that once the values of $s_1,s_2,\ldots,s_{2e}$ (and therefore the value of $z_{j_2}$ by Equation \eqref{eqfirst}) and $t_1,t_2,\ldots,t_{e+i-1},t_{e+i+1},\ldots,t_{2e}$ are determined, then the value of $t_{e+i}$ is also determined according to Equation (\ref{eq:condition13}). 
		
		Now we are ready to count the number of solutions of the system of Equations (\ref{eq:condition17}), (\ref{eq:condition18}) and (\ref{eq:condition13}).
		The system of Equations (\ref{eq:condition17}), (\ref{eq:condition18}) has $q^{2e-2}$ solutions, but the zero solution must be excluded as the $2e$-tuple $x$ has an invertible entry. Thus, $q^{2e-2}-1$ solutions of Equations (\ref{eq:condition17}) and (\ref{eq:condition18}) satisfy our requirements. Moreover, for every such a solution there are $q^{2e-1}$ solutions of Equation (\ref{eq:condition13}) as the variables  $t_1,t_2,\ldots,t_{e+i-1},t_{e+i+1},\ldots,t_{2e}$ are independent. Thus, the total number of solutions of the system of Equations (\ref{eq:condition17}), (\ref{eq:condition18}) and (\ref{eq:condition13}) is
		$$
		(q^{2e-2}-1)q^{2e-1}.
		$$
		Finally, we conclude that if the vertices $[a]$ and $[a+h]$ are non-adjacent, then they have 
		$$
		\dfrac{(q^{2e-2}-1)q^{2e-1}}{q^2-q} = \dfrac{(q^{2e-2}-1)q^{2e-2}}{q-1}
		$$
		common neighbours in $\overline{X(2e,K)}$.
		Also, if the vertices $[a]$ and $[a+h]$ are adjacent, then the $2e$-tuples from $[a]$ and $[a+h]$ were counted among the $$
		(q^{2e-2}-1)q^{2e-1}
		$$ solutions of the system of Equations (\ref{eq:condition17}), (\ref{eq:condition18}) and (\ref{eq:condition13}). Thus, the number of common neighbours of the vertices $[a]$ and $[a+h]$ in $\overline{X(2e,K)}$ is
		$$
		\dfrac{(q^{2e-2}-1)q^{2e-2}}{q-1} - 2,
		$$
		which completes the proof.
	\end{proof}

	\begin{corollary}\label{cor:XTwoVerticesFromTheSameClass}
	Let $C(H,u)$ be a class of the partition $\Pi(2e,K)$, where $H$ is a hyperplane of $T$ and $u\in V'$ such that $ru\notin H$. Then any two distinct vertices from $C(H,u)$ have $q^{4e-2}+q^{4e-3}-q^{4e-4}-q^{2e-2}$ common neighbours in $X(2e,K)$. 
	\end{corollary}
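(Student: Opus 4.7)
The plan is to leverage Proposition \ref{propmain1}: since the common-neighbour counts in $\overline{X(2e,K)}$ are already established, passing to $X(2e,K)$ is a bookkeeping step via the standard formula relating common neighbours in a regular graph and in its complement.

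First, I would record the elementary identity that in any $k$-regular simple graph $G$ on $v$ vertices, for distinct vertices $p,q$ the numbers of their common neighbours $\mu_G(p,q)$ in $G$ and $\mu_{\overline{G}}(p,q)$ in the complement satisfy
$$
\mu_G(p,q) \;=\; \mu_{\overline{G}}(p,q) + 2k - v + 2 - 2\epsilon,
$$
where $\epsilon=1$ if $p$ and $q$ are adjacent in $G$ and $\epsilon=0$ otherwise. The proof is routine: partition $V\setminus\{p,q\}$ into the four cells determined by whether each vertex lies in $N_G(p)$ or $N_{\overline{G}}(p)$ and in $N_G(q)$ or $N_{\overline{G}}(q)$, then use $k$-regularity to equate the two off-diagonal cells and sum.

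I would then apply this identity with $G = X(2e,K)$, plugging in $v = q^{2e-1}(q^{2e}-1)/(q-1)$ from Proposition \ref{prop1} and $k = q^{4e-2}+q^{4e-3}-q^{2e-2}$ from Proposition \ref{prop2}. For a pair $[a],[a+h] \in C(H,u)$, Proposition \ref{propmain1} splits into two cases. If the pair is adjacent in $X(2e,K)$ (non-adjacent in $\overline{X(2e,K)}$), then $\epsilon=1$ and $\mu_{\overline{X}}=(q^{2e-2}-1)q^{2e-2}/(q-1)$; if the pair is non-adjacent in $X(2e,K)$ (adjacent in $\overline{X(2e,K)}$), then $\epsilon=0$ and $\mu_{\overline{X}}=(q^{2e-2}-1)q^{2e-2}/(q-1)-2$. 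The $-2\epsilon$ term in the identity and the extra $-2$ inside $\mu_{\overline{X}}$ cancel exactly across the two cases, so both yield the same value of $\mu_X$, as is needed for a class of a divisible design graph.

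What remains is pure algebra: clear the denominator $q-1$ and expand $2(q-1)(q^{4e-2}+q^{4e-3}-q^{2e-2})$ together with $q^{2e-1}(q^{2e}-1)$. The resulting numerator simplifies to $(q-1)(q^{4e-2}+q^{4e-3}-q^{4e-4}-q^{2e-2})$, giving the claimed count. There is no genuine obstacle; all the combinatorial content sits in Proposition \ref{propmain1}, and this corollary is essentially the complementation step.
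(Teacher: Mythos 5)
Your proposal is correct and is essentially the paper's intended derivation: the corollary is obtained from Proposition \ref{propmain1} by the standard complementation identity for a regular graph, using $v$ and $k$ from Propositions \ref{prop1} and \ref{prop2}, with the $-2$ in the adjacent case cancelling the $-2\epsilon$ term exactly as you note, and the final algebra checks out.
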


	A class $C(H,u)$ of the partition $\Pi(2e,K)$, being the set of vertices $\{[u+h] : h \in H\}$ can be naturally identified with the set of $2e$-tuples $\{\sigma(u+h) : \sigma \in K^\times, h \in H\}$ of size $(q^2-q)q^{2e-1} = (q-1)q^{2e}$.
	
	\begin{lemma}\label{lem:cosetsOfT}
		Let $C(H,u)$ be a class of the partition $\Pi(2e,K)$, where $H$ is a hyperplane of $T$ and $u\in V'$ such that $ru\notin H$. Then the corresponding set of $2e$-tuples $\{\sigma(u+h) : \sigma \in K^\times, h \in H\}$ includes the coset $u + T$ and can moreover be partitioned into $q-1$ cosets of $T$, where a set of representatives of these cosets is $\{z_ju : j \in \{2,\ldots,q\}\}$. 
	\end{lemma}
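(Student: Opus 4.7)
The plan is to show directly that $S := \{\sigma(u+h) : \sigma \in K^\times, h \in H\}$ equals the disjoint union $\bigcup_{j=2}^{q}(z_j u + T)$, and then observe that $u+T$ coincides with one of these cosets. The key structural input is Lemma \ref{lemij=0}: for any $h \in H \subseteq T$ we have $rh = 0$, which makes expansion of $\sigma(u+h)$ extremely clean whenever $\sigma$ is split along the decomposition $K = \{z_{j_1}+z_{j_2}r\}$ furnished by Lemma \ref{lemjr}.

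First I would establish $S \subseteq \bigcup_{j=2}^{q}(z_j u + T)$. By Lemma \ref{lemjr}, any $\sigma \in K^\times$ admits a unique representation $\sigma = z_j + z_k r$ with $j,k \in \{1,\ldots,q\}$, and the condition $\sigma \notin J$ forces $j \geq 2$. Using $rh = 0$, the expansion
$$\sigma(u+h) = z_j u + z_j h + z_k r u$$
holds, and since $z_j h$ and $z_k ru$ have all entries in $J$ we conclude $\sigma(u+h) \in z_j u + T$.

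For the reverse inclusion, fix $j \in \{2,\ldots,q\}$ and $t \in T$; I need to solve $\sigma(u+h) = z_j u + t$ in $\sigma \in K^\times$, $h \in H$. The hypothesis $ru \notin H$ says that $ru + H$ is a nonzero element of the one-dimensional $(K/J)$-space $T/H$, and hence $ru + H$ is a basis of $T/H$ over $K/J$. Consequently there is a unique $k \in \{1,\ldots,q\}$ with $t - z_k r u \in H$. Setting $\sigma := z_j + z_k r$ (a unit since $z_j \notin J$) and $h := z_j^{-1}(t - z_k ru)$, which lies in $H$ because $z_j$ is a unit and $H$ is stable under scalar multiplication by $K^\times$, the same expansion using $rh=0$ gives $\sigma(u+h) = z_j u + t$ as desired.

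It remains to see that the cosets $z_j u + T$, $j \in \{2,\ldots,q\}$, are pairwise distinct and that one of them equals $u+T$. Distinctness: if $z_j u + T = z_{j'} u + T$, then $(z_j - z_{j'})u \in T$, and since $u$ has a unit entry this forces $z_j - z_{j'} \in J$, which by the remark after Lemma \ref{lemjr} means $j = j'$. The inclusion $u+T \subseteq S$ follows because $1 \notin J$, so there is a unique $j^* \in \{2,\ldots,q\}$ with $1 - z_{j^*} \in J$, and then $(1 - z_{j^*})u \in T$ gives $u + T = z_{j^*}u + T$, one of the cosets just produced. I do not anticipate a real obstacle beyond keeping the bookkeeping of $\sigma = z_j + z_k r$ straight; the substantive content is the identification of $T/H$ with $K/J$ via the class of $ru$, which is where the hypothesis $ru \notin H$ is consumed.
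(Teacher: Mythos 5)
Your argument is correct and follows essentially the same route as the paper's proof: decompose $\sigma=z_j+z_kr$ via Lemma \ref{lemjr}, use Lemma \ref{lemij=0} to kill the cross-terms in $\sigma(u+h)$, and exploit $ru\notin H$ through the fact that $ru$ spans $T$ modulo $H$ (the paper phrases this as ``$tru+z_jh$ runs over $T$''). You merely spell out the reverse inclusion, the distinctness of the cosets, and the identification of $u+T$ a bit more explicitly than the paper does.
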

	\begin{proof}
		We recall that $J=\langle r\rangle$. Let $\sigma\in K^\times$ and $h\in H$. We consider the difference $\sigma(u+h) - z_ju$ for some $j\in\{2,\ldots,q\}$. Using Lemma \ref{lemjr}, we assume that $\sigma = s + tr$ where $s,t \in \{z_1,z_2,\ldots,z_q\}$ and $s\neq z_1$ by Lemma \ref{lempart1}. Then, using Lemma \ref{lemij=0} we have 
		\begin{equation}\label{eq:condition19}
			\sigma(u+h) - z_ju = (s+tr)(u+h) - z_ju = su + tru + sh - z_ju = (s-z_j)u + \underbrace{tru}_{\substack{\notin H\\\text{ if }t\neq 0}}+\underbrace{sh}_{\in H}.    
		\end{equation}
				Put $s = z_j$ in Equation (\ref{eq:condition19}). Then we get
		\begin{equation}\label{eq:condition20}
			\sigma(u+h) - z_ju= \underbrace{tru}_{\substack{\notin H\\\text{ if }t\neq 0}}+\underbrace{z_jh}_{\in H}.    
		\end{equation}
		In Equation (\ref{eq:condition20}), if $t$ runs over $
		\{z_1,z_2,\ldots,z_q\}$ and $h$ runs over $H$, then
		$(tru + z_jh)$ runs over $T$. Thus, the set of $2e$-tuples $\{\sigma(u+h) : \sigma \in K^\times, h \in H\}$ includes the coset $z_ju + T$, and if $1\in z_i+J$ for some $i\in\{2,\ldots,q\}$, then $z_iu+T=u+T$. Moreover, this set of $2e$-tuples can be partitioned into $q-1$ cosets of $T$, where a set of representatives of these cosets is $\{z_ju : j \in \{2,\ldots,q\}\}$. This completes the proof.    
	\end{proof}
	
	\begin{proposition}\label{propmain2}
	Let $H_1,H_2$ be hyperplanes of $T$, and let $a\in V'$, $b\in V'$ with $ra\notin H_1,rb\notin H_2$, such that $C(H_1,a)$ and $C(H_2,b)$ are distinct classes of the partition $\Pi(2e,K)$. Then if the vertices $[a]$ and $[b]$ are non-adjacent in $\overline{X(2e,K)}$, then they have $\dfrac{(q^{2e-2}-1)q^{2e-3}}{q-1}$ common neighbours in $\overline{X(2e,K)}$, and if the vertices $[a]$ and $[b]$ are adjacent in $\overline{X(2e,K)}$, then they have $\dfrac{(q^{2e-2}-1)q^{2e-3}}{q-1} - 2$ common neighbours in $\overline{X(2e,K)}$.   
	\end{proposition}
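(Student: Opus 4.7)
The plan is to parallel the proof of Proposition \ref{propmain1}, with the pair of conditions $aMx^t=0$ and $bMx^t=0$ (expressing that $[x]$ is a common neighbour of $[a]$ and $[b]$ in $\overline{X(2e,K)}$) playing the role that the pair $aMx^t=0$, $hMx^t=0$ played there. Writing $a_j=c_j+d_jr$, $b_j=c'_j+d'_jr$, $x_j=s_j+t_jr$ with all coefficients in $\{z_1,\ldots,z_q\}$, and using Lemma \ref{lemij=0} (so that $r^2=0$), each of the two conditions splits, exactly as in Proposition \ref{propmain1}, into an $s$-congruence modulo $J$, namely
\begin{equation*}
\phi_a(s):=\sum_{j=1}^{e}(c_js_{e+j}-c_{e+j}s_j)\equiv 0\pmod J \quad \text{and} \quad \phi_b(s):=\sum_{j=1}^{e}(c'_js_{e+j}-c'_{e+j}s_j)\equiv 0\pmod J,
\end{equation*}
together with one equation that is linear in $(t_1,\ldots,t_{2e})$ with an $s$-dependent right-hand side, and whose coefficient vector on the $t_j$'s coincides with that of $\phi_a$ (resp. $\phi_b$).

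The crucial new step is to show that $\phi_a$ and $\phi_b$ are $\F_q$-linearly independent as linear forms on $(s_1,\ldots,s_{2e})$. By Lemma \ref{lem:cosetsOfT}, the class $C(H_1,a)$ is the disjoint union of the cosets $z_ja+T$ for $j\in\{2,\ldots,q\}$ (and similarly for $C(H_2,b)$), so the hypothesis $C(H_1,a)\neq C(H_2,b)$ translates, upon reduction modulo $J$, into the statement that the vectors $(c_1,\ldots,c_{2e})$ and $(c'_1,\ldots,c'_{2e})$ in $\F_q^{2e}$ are not proportional (each is nonzero since $a,b\in V'$ together with Lemma \ref{lempart1} forces some $c_i$ and some $c'_{i'}$ to be nonzero). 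Hence $\phi_a$ and $\phi_b$ are independent, the $s$-system has rank $2$ over $\F_q$, and by the same coefficient-matrix argument the $t$-system (with $s$ fixed) also has rank $2$ and is therefore consistent for every right-hand side.

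Counting is then routine: the $s$-system has $q^{2e-2}$ solutions, and for each of them the $t$-system has $q^{2e-2}$ solutions, giving $q^{4e-4}$ tuples $(s,t)$ in total. We must exclude tuples with $x\notin V'$; by Lemma \ref{lempart1} this means $s=0$, in which case both $s$-equations are trivially satisfied and the two $t$-equations reduce to $\phi_a(t)\equiv\phi_b(t)\equiv 0\pmod J$, contributing $q^{2e-2}$ invalid tuples. Dividing the valid count $q^{4e-4}-q^{2e-2}$ by the equivalence-class size $q^2-q$ gives
\begin{equation*}
\frac{q^{4e-4}-q^{2e-2}}{q^2-q}=\frac{(q^{2e-2}-1)q^{2e-3}}{q-1}.
\end{equation*}
Since $aMa^t=bMb^t=0$ automatically, the class $[a]$ (resp. $[b]$) is itself counted precisely when $aMb^t=0$, that is, precisely when $[a]$ and $[b]$ are adjacent in $\overline{X(2e,K)}$; in that case we subtract $2$ to obtain the claimed formula.

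The main obstacle is the linear-independence step in the second paragraph, where the different-class hypothesis enters via Lemma \ref{lem:cosetsOfT}; this is the structural difference from Proposition \ref{propmain1}, which gave rank $2$ on the $s$-system from ``$a$ plus $h$'' rather than from ``$a$ and $b$''. Once that independence is established, the remaining bookkeeping of $s$- and $t$-solutions is a direct adaptation of the count already carried out in Proposition \ref{propmain1}.
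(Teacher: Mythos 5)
Your argument is correct and yields exactly the paper's counts ($q^{4e-4}-q^{2e-2}$ admissible tuples, division by $q^2-q$, and the $-2$ correction precisely when $aMb^t=0$), but it gets there by a different decomposition than the paper's own proof of Proposition~\ref{propmain2}. The paper stays over $K$: it row-reduces the $2\times 2e$ coefficient matrix of the system $aMx^t=0$, $bMx^t=0$ using the invertible pivot $a_i$, and its central step is to show that some entry $b_\gamma-a_\gamma a_i^{-1}b_i$ of the reduced second row is invertible, via an entry-by-entry contradiction argument which (through Lemma~\ref{lem:cosetsOfT}) forces $b-a\in T$ and hence equal classes; with two invertible pivots it solves for $x_\ell,x_k$ over $K$ and counts $(q^2)^{2e-2}-q^{2e-2}$ admissible solutions at once. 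You instead transplant the mod-$J$ splitting that the paper uses only in Proposition~\ref{propmain1}: each condition becomes an $s$-congruence and a $t$-congruence over $\F_q$ with the same coefficient vector, and the non-degeneracy you need --- $\F_q$-independence of $\phi_a,\phi_b$ --- is equivalent to the paper's pivot condition (if the reductions of $a$ and $b$ modulo $J$ were proportional, the scalar would have to be $\bar a_i^{-1}\bar b_i$). Your derivation of that independence is in fact more direct than the paper's: proportional reductions put $b$ in a coset $z_ja+T$, which by Lemma~\ref{lem:cosetsOfT} lies in the tuple set of $C(H_1,a)$, contradicting the disjointness of distinct classes guaranteed by Proposition~\ref{prop:CHu}. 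What the paper's route buys is lighter bookkeeping once the pivot is found (no carry terms and no separate $t$-system); what yours buys is uniformity of method with Proposition~\ref{propmain1} and a cleaner view of where the distinct-class hypothesis enters. Your exclusion step ($s=0$ forces all $x_j\in J$ by Lemma~\ref{lempart1}, giving $q^{2e-2}$ discarded solutions) and the adjacency adjustment agree with the paper.
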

	\begin{proof}
		Since $[b]$ does not belong to $C(H_1,a)$, we conclude that there exists no  $\sigma \in K^\times$ and $h \in H_1$ such that $b = \sigma(a+h)$ holds. 
		
		Let $[x]$ be a common neighbour of $[a]$ and $[b]$. Then we have a system of the equations
		\begin{equation}\label{eq:condition21}
			(a_1x_{e+1}-a_{e+1}x_1) +  (a_2x_{e+2}-a_{e+2}x_2) + \dots + (a_ex_{2e}-a_{2e}x_e) = 0  
		\end{equation}
		and
		\begin{equation}\label{eq:condition22}
			(b_1x_{e+1}-b_{e+1}x_1) +  (b_2x_{e+2}-b_{e+2}x_2) + \dots + (b_ex_{2e}-b_{2e}x_e) = 0.
		\end{equation}
		We consider the ordering of variables $x_{e+1},x_{e+2},\ldots,x_{2e},x_1,\ldots,x_e$ and write down the matrix of coefficients:
		$$
		\left(
		\begin{matrix}
			a_1 & a_2 & \dots & a_{2e}\\
			b_1 & b_2 & \dots & b_{2e}
		\end{matrix}
		\right)
		$$
		Note that there exist $i \in \{1,2,\ldots,2e\}$ such that $a_i$ is invertible. Then the matrix of coefficients can be written as
		\begin{align*}
		\left(
		\begin{matrix}
			a_1 & \dots & a_i &  \dots & a_{2e}\\
			b_1 & \dots & b_i &  \dots & b_{2e}
		\end{matrix}
		\right),
		\end{align*}
		which is equivalent to
		\begin{align*}
		\left(
		\begin{matrix}
			a_1a_i^{-1} & \dots & 1 &  \dots & a_{2e}a_i^{-1}\\
			b_1 & \dots & b_i & \dots  & b_{2e}
		\end{matrix}
		\right)
		\end{align*}
		and, consequently,
		\begin{align}\label{mat}
		\left(
		\begin{matrix}
			a_1a_i^{-1} & \dots & 1 & \dots &  a_{2e}a_i^{-1}\\
			b_1-a_1a_i^{-1}b_i & \dots & 0 & \dots & b_{2e}-a_{2e}a_i^{-1}b_i
		\end{matrix}
		\right).
		\end{align}
	Now, let us show that there exists $\gamma\in \{1,2,\ldots,2e\}\setminus\{i\}$ such that $b_{\gamma}-a_{\gamma}a_i^{-1}b_i$ is invertible. To begin with, we find that there exists $j \in \{1,2,\ldots,2e\} \setminus \{i\}$ such that $b_j$ is invertible (otherwise, $a_i$ and $b_i$ are the only invertible entries of $a$ and $b$ and there exist $\sigma \in K^\times$ and $h \in T$ such that $b = \sigma(a+h)$, a contradiction by Lemma \ref{lem:cosetsOfT}). Suppose to the contrary that $b_{\gamma}-a_{\gamma}a_i^{-1}b_i$ is a zero divisor for all $\gamma \in  \{1,2,\ldots,2e\}\setminus\{i\}$,
		that is, 
		$$b_{\gamma}-a_{\gamma}a_i^{-1}b_i = t_{\gamma}r$$ for some $t_\gamma\in\{z_1,z_2,\ldots,z_q\}$.  In particular, we have 
		\begin{equation}\label{eq:condition23}
			b_j-a_ja_i^{-1}b_i = t_jr,
		\end{equation}
		which implies that $a_j$ and $b_i$ are also invertible (otherwise, the element on left hand side of the equality in Equation (\ref{eq:condition23}) is invertible and equal to a zero divisor, a contradiction). Thus, without loss of generality, we may assume $a_i = b_i = 1$ (we can multiply the original two equations by $a_i^{-1}$ and $b_i^{-1}$). 	We also deduce that for any $\gamma \in \{1,2,\ldots,2e\}$ the elements $a_{\gamma}$ and $b_{\gamma}$ are both invertible or both zero divisors. Equation (\ref{eq:condition23}) then becomes
		\begin{equation}\label{eq:condition24}
			b_j-a_j = t_jr.
		\end{equation}
		We observe that as $j$ varies over $\{1,2,\ldots,2e\}\setminus\{i\}$ such that $b_j$ is invertible, Equation \eqref{eq:condition24} holds. This means that $b-a \in T$, that is, $a$ and $b$ belong to the same coset of $T$. Then by Lemma \ref{lem:cosetsOfT}, $[a]$ and $[b]$ belong to the same class of the partition $\Pi(2e,K)$, which contradicts the choice of $[a]$ and $[b]$.
		
		Thus there exist $\ell, k \in \{1,2,\ldots,2e\}$ such that $1 \le \ell < k \le 2e$ and the system of Equations (\ref{eq:condition21}) and (\ref{eq:condition22}) is equivalent to the system of equations
		\begin{equation}\label{eq:condition25}
			x_{\ell} = M_{\ell}(x_1,x_2,\ldots,x_{\ell-1},x_{\ell+1},\ldots,x_{k-1},x_{k+1},\ldots,x_{2e})    
		\end{equation}
		and
		\begin{equation}\label{eq:condition26}
			x_{k} = M_k(x_1,x_2,\ldots,x_{\ell-1},x_{\ell+1},\ldots,x_{k-1},x_{k+1},\ldots,x_{2e}),    
		\end{equation}
		where $M_{\ell}(x_1,x_2,\ldots,x_{\ell-1},x_{\ell+1},\ldots,x_{k-1},x_{k+1},\ldots,x_{2e})$ and $M_k(x_1,x_2,\ldots,x_{\ell-1},x_{\ell+1},\ldots,x_{k-1},x_{k+1},\ldots,$\\$x_{2e})$
		are linear combinations of the $2e-2$ variables $x_1,x_2,\ldots,x_{\ell-1},x_{\ell+1},\ldots,x_{k-1},x_{k+1},\ldots,x_{2e}$.
		
		Now we are ready to count the number of solutions of the system of Equations (\ref{eq:condition25})  and (\ref{eq:condition26}) in the variables $x_1,x_2,\ldots,x_{2e},y$ and $z$.
		It has $(q^2)^{2e-2}$ solutions, but the solutions $x$ such that all the entries are zero divisors must be excluded as the $2e$-tuple $x$ has an invertible entry. Thus, $(q^2)^{2e-2}-q^{2e-2}$ solutions of Equations (\ref{eq:condition25}) and (\ref{eq:condition26}) satisfy our requirements.  
		Finally, we conclude that if the vertices $[a]$ and $[b]$ are non-adjacent, then they have 
		$$
		\dfrac{(q^{2e-2}-1)q^{2e-2}}{q^2-q} = \dfrac{(q^{2e-2}-1)q^{2e-3}}{q-1}
		$$
		common neighbours in $\overline{X(2e,K)}$.
		Also, if the vertices $[a]$ and $[b]$ are adjacent, then the $2e$-tuples from $[a]$ and $[b]$ were counted among the $$
		(q^{2e-2}-1)q^{2e-2}
		$$ solutions of the system of Equations (\ref{eq:condition25}) and (\ref{eq:condition26}). Thus, the number of common neighbours of the vertices $[a]$ and $[b]$ in $\overline{X(2e,K)}$ is
		$$
		\dfrac{(q^{2e-2}-1)q^{2e-3}}{q-1} - 2,
		$$
		and the proof is complete.
	\end{proof}
	
	\begin{corollary}\label{cor:XTwoVertices FromDifferentClasses}
	Let $H_1,H_2$ be hyperplanes of $T$, and let $a,b\in V'$ with $ra\notin H_1,rb\notin H_2$, such that $C(H_1,a)$ and $C(H_2,b)$ are distinct classes of the partition $\Pi(2e,K)$. Then the vertices $[a]$ and $[b]$ have $q^{4e-2}+q^{4e-3}-q^{4e-4}-q^{4e-5}-q^{2e-2}+q^{2e-3}$ common neighbours in $X(2e,K)$.      
	\end{corollary}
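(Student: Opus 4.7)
The plan is to derive the count of common neighbours in $X(2e,K)$ from the count of common neighbours in its complement $\overline{X(2e,K)}$, which is given by Proposition \ref{propmain2}. The key tool is a standard double counting identity that relates $\lambda_X$ and $\lambda_{\overline{X}}$ in a regular graph and its complement.

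Fix two distinct vertices $[a]$ and $[b]$. Among the remaining $v-2$ vertices of $X(2e,K)$, each one falls into exactly one of four categories: adjacent to both $[a]$ and $[b]$ in $X(2e,K)$ (there are $\lambda_X$ such vertices), adjacent to both in $\overline{X(2e,K)}$ (there are $\lambda_{\overline{X}}$ such vertices), adjacent to $[a]$ but not $[b]$ in $X(2e,K)$, or adjacent to $[b]$ but not $[a]$ in $X(2e,K)$. Since $X(2e,K)$ is regular of degree $k = q^{4e-2}+q^{4e-3}-q^{2e-2}$ by Proposition \ref{prop2}, the number of vertices in each of the latter two classes equals $k-\lambda_X - [[a]\sim[b]]$, where $[[a]\sim[b]]$ is $1$ if $[a]$ and $[b]$ are adjacent in $X(2e,K)$ and $0$ otherwise. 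Adding up gives the identity
\begin{equation*}
\lambda_X + \lambda_{\overline{X}} + 2\bigl(k - \lambda_X - [[a]\sim[b]]\bigr) = v-2,
\end{equation*}
equivalently $\lambda_X = \lambda_{\overline{X}} + 2k - v + 2 - 2[[a]\sim[b]]$.

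Now I would plug in the two cases from Proposition \ref{propmain2}. If $[a]$ and $[b]$ are adjacent in $X(2e,K)$, they are non-adjacent in $\overline{X(2e,K)}$, so $\lambda_{\overline{X}} = \tfrac{(q^{2e-2}-1)q^{2e-3}}{q-1}$ and $[[a]\sim[b]]=1$, giving $\lambda_X = \tfrac{(q^{2e-2}-1)q^{2e-3}}{q-1} + 2k - v$. If they are non-adjacent in $X(2e,K)$, then $\lambda_{\overline{X}} = \tfrac{(q^{2e-2}-1)q^{2e-3}}{q-1} - 2$ and $[[a]\sim[b]]=0$, giving $\lambda_X = \tfrac{(q^{2e-2}-1)q^{2e-3}}{q-1} - 2 + 2k - v + 2 = \tfrac{(q^{2e-2}-1)q^{2e-3}}{q-1} + 2k - v$. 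The two cases yield the same value, as required for a DDG-type uniform count across the pair.

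Finally, using $v = \tfrac{q^{2e-1}(q^{2e}-1)}{q-1}$ from Proposition \ref{prop1} and $k = q^{4e-2}+q^{4e-3}-q^{2e-2}$ from Proposition \ref{prop2}, I would clear denominators, collect powers of $q$, and verify that
\begin{equation*}
\tfrac{(q^{2e-2}-1)q^{2e-3}}{q-1} + 2k - v = q^{4e-2}+q^{4e-3}-q^{4e-4}-q^{4e-5}-q^{2e-2}+q^{2e-3}.
\end{equation*}
The only obstacle is this final algebraic simplification, which is purely mechanical; the conceptual work is already done by Proposition \ref{propmain2} and the above complement identity.
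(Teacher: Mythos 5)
Your proposal is correct and follows exactly the route the paper intends: the corollary is stated as an immediate consequence of Proposition \ref{propmain2}, obtained by complementation using $v$ from Proposition \ref{prop1} and the degree $k$ from Proposition \ref{prop2}, which is precisely your identity $\lambda_X = \lambda_{\overline{X}} + 2k - v + 2 - 2[[a]\sim[b]]$. Your observation that the two adjacency cases collapse to the same value, and the final simplification to $q^{4e-2}+q^{4e-3}-q^{4e-4}-q^{4e-5}-q^{2e-2}+q^{2e-3}$, are both correct.
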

	
\subsection{The numbers of common neighbours of two vertices in the graph $Y(2e,K)$}
In this section, we determine the numbers of common neighbours of two vertices in the graph $Y(2e,K)$.

\begin{proposition}\label{prop:YTwoVerticesFromTheSameClass}
		Let $C(H,u)$ be a class of the partition $\Pi(2e,K)$, where $H$ is a hyperplane of $T$ and $u\in V'$ such that $ru\notin H$. Then any two distinct vertices from $C(H,u)$ have $q^{4e-3}-q^{4e-4}-q^{2e-2}$ common neighbours in $Y(2e,K)$.
	\end{proposition}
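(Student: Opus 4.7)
The plan is to adapt the argument of Proposition \ref{propmain1} to the adjacency rule of $Y(2e,K)$. I would fix two distinct vertices of $C(H,u)$, which by a suitable choice of representatives can be written as $[a]$ and $[a+h]$ with $h \in H \setminus \{0\}$. Then $[x]$ is a common neighbour in $Y(2e,K)$ if and only if both $aMx^t$ and $(a+h)Mx^t$ lie in $J \setminus \{0\}$; I would count the tuples $x \in V'$ meeting this condition and divide by $|K^\times| = q^2 - q$.

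Using the decomposition $a_j = c_j + d_j r$, $x_j = s_j + t_j r$, $h_j = w_j r$ already used in the proof of Proposition \ref{propmain1}, the same computation yields $aMx^t = z_{j_1}(s) + z_{j_3}(s,t)\,r$, where $z_{j_1}(s) = 0$ is precisely the mod-$J$ equation \eqref{eq:condition11} and $z_{j_3}(s,t) = 0$ is \eqref{eq:condition12}. Since all entries of $h$ lie in $J$, Lemma \ref{lemij=0} reduces $hMx^t$ to $W(s)\,r$ with $W(s) = \sum_{j=1}^{e}(w_j s_{e+j} - w_{e+j} s_j)$; let $z_{k_1}(s) \in \{z_1,\ldots,z_q\}$ be the coset representative of $W(s) \pmod J$. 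Therefore
\begin{equation*}
(a+h)Mx^t \;=\; z_{j_1}(s) + \bigl(z_{j_3}(s,t) + z_{k_1}(s)\bigr)r,
\end{equation*}
and the two $Y$-adjacency conditions reduce to the three requirements (C1) $z_{j_1}(s)=0$, (C2) $z_{j_3}(s,t)\neq 0$, and (C3) $z_{j_3}(s,t)+z_{k_1}(s)\not\equiv 0 \pmod J$.

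For the count, (C1) cuts out a hyperplane in $\bar s \in \mathbb{F}_q^{2e}$ with $q^{2e-1}$ solutions. By the independence argument of Proposition \ref{propmain1} (which relies on $ra \notin H$), the linear form defining $z_{k_1}(s) \pmod J$ is independent of the form in (C1); so $q^{2e-2}$ of the (C1)-solutions satisfy $z_{k_1}(s)=0$ (Case B) and $(q-1)q^{2e-2}$ satisfy $z_{k_1}(s)\neq 0$ (Case A). The trivial solution $\bar s=0$ lies in Case B and must be discarded, since $x\in V'$ forces some $s_j\neq 0$. Because $a$ has an invertible entry, $z_{j_3}(s,\cdot)$ is a nontrivial affine-linear form on $\bar t\in\mathbb{F}_q^{2e}$, so each value in $\mathbb{F}_q$ has exactly $q^{2e-1}$ preimages. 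In Case A the two distinct forbidden values $\{0,\,-z_{k_1}\}$ leave $(q-2)q^{2e-1}$ admissible $t$'s, while in Case B only $\{0\}$ is forbidden, leaving $(q-1)q^{2e-1}$. Summing and dividing by $q^2-q$ yields
\begin{equation*}
\frac{(q-1)q^{2e-2}\cdot(q-2)q^{2e-1}\;+\;(q^{2e-2}-1)(q-1)q^{2e-1}}{q(q-1)} \;=\; q^{4e-3}-q^{4e-4}-q^{2e-2}.
\end{equation*}

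No correction for the cases $[x]=[a]$ or $[x]=[a+h]$ is needed, because $xMx^t=0$ automatically forces $[a]$ (resp.\ $[a+h]$) to fail the first (resp.\ second) adjacency condition. The main subtlety, absent from the proof of Proposition \ref{propmain1}, is that the $Y$-adjacency condition is ``in $J$ but nonzero'', which introduces the two avoidance requirements (C2) and (C3) on $z_{j_3}$; the case split according to whether $z_{k_1}(s)$ vanishes, so that the forbidden values of $z_{j_3}$ either merge into one or remain distinct, is the bookkeeping step I expect to have to handle most carefully.
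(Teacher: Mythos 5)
Your proposal is correct and follows essentially the same route as the paper's proof: the same decomposition $a_j=c_j+d_jr$, $x_j=s_j+t_jr$, $h_j=w_jr$, the same reduction to linear conditions modulo $J$, and the same linear-independence argument based on $ra\notin H$, with your Cases A and B corresponding exactly to the paper's cases $y\neq z$ and $y=z$. The only difference is bookkeeping: the paper enumerates the pairs of values $(y,z)\in(J\setminus\{0\})^2$ and solves the resulting rank-two system in the $s$-variables, whereas you fix $s$ on the hyperplane given by (C1) and count the admissible values of the affine form in $t$ avoiding one or two forbidden residues, which yields the identical totals.
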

	\begin{proof}
		We proceed similarly as in the proof of Proposition \ref{propmain1}. Let $[a],[b]$ be two distinct vertices from $C(H,u)$.	Without loss of generality, we may assume $a = u$ and $b = u+h$ for some $h \in H$ where $h$ is not identically $0$. Let $[x]$ be a vertex of $\overline{X(2e,K)}$ that is a common neigbour of $[a]$ and $[b]$. Analogous to Equations \eqref{eq:condition1} and \eqref{eq:condition7}, we have
		\begin{equation}\label{yconda}
			(a_1x_{e+1}-a_{e+1}x_1) +  (a_2x_{e+2}-a_{e+2}x_2) + \dots + (a_ex_{2e}-a_{2e}x_e) = yr\text{ for some }y\in\{z_2,\ldots,z_q\}  
		\end{equation}
		and
		\begin{align}\label{eqcond1}
			&(a_1x_{e+1}-a_{e+1}x_1) +  (a_2x_{e+2}-a_{e+2}x_2) + \dots + (a_ex_{2e}-a_{2e}x_e) + \notag\\
			& (h_1x_{e+1}-h_{e+1}x_1) +  (h_2x_{e+2}-h_{e+2}x_2) + \dots + (h_ex_{2e}-h_{2e}x_
			e) = zr\text{ for some }y\in\{z_2,\ldots,z_q\} ,
		\end{align}
		and Equation \eqref{eqcond1} implies that
		\begin{align}\label{ycondb}
			(h_1x_{e+1}-h_{e+1}x_1) +  (h_2x_{e+2}-h_{e+2}x_2) + \dots + (h_ex_{2e}-h_{2e}x_e) = (z-y)r
		\end{align}
		Our task is to solve Equations \eqref{yconda} and \eqref{ycondb}. Using Lemma \ref{lemjr}, we assume that for every $j \in \{1,2,\ldots,2e\}$, $a_j = c_j+d_jr$ and $x_j = s_j+t_jr$, where $c_j,d_j,s_j,t_j$ are from $\{z_1,z_2,\ldots,z_q\}$, $c_j,d_j$ are constants and $s_j,t_j$ are variables.
		Then Equation \eqref{yconda} yields
		\begin{align}\label{sud}
			& (c_1+d_1r)(s_{e+1}+t_{e+1}r) - (c_{e+1}+d_{e+1}r)(s_1+d_1r) +\notag\\
			& (c_2+d_2r)(s_{e+2}+t_{e+2}r) - (c_{e+2}+d_{e+2}r)(s_2+d_2r) + \dots +\notag\\
			& (c_e+d_er)(s_{2e}+t_{2e}r) - (c_{2e}+d_{2e}r)(s_e+d_er) = yr,
		\end{align}
		and using Lemma \ref{lemij=0}, Equation \eqref{sud} yields
	\begin{align}\label{ycondaa}
			& (c_1s_{e+1} + (c_1t_{e+1}+d_1s_{e+1})r) - (c_{e+1}s_1 + (c_{e+1}t_1+d_{e+1}s_1)r) +\notag\\
			& (c_2s_{e+2} + (c_2t_{e+2}+d_2s_{e+2})r) - (c_{e+2}s_2 + (c_{e+2}t_2+d_{e+2}s_2)r) + \dots +\notag\\
			& (c_es_{2e} + (c_et_{2e}+d_es_{2e})r) - (c_{2e}s_e + (c_{2e}t_e+d_{2e}s_e)r) = yr.
		\end{align}
		Now, let
		\begin{align}\label{eqcond5}
		(c_1s_{e+1} - c_{e+1}s_1)  + (c_2s_{e+2}-c_{e+2}s_2) + \cdots + (c_es_{2e}-c_{2e}s_e) = z_{j_1}+z_{j_2}r
	\end{align} for some $j_1,j_2\in\{1,2,\ldots,q\}$.
	We proceed as in the proof of Proposition \ref{propmain1}, where we concluded that Equation \eqref{eq:group} is equivalent to Equations \eqref{eq:condition11} and \eqref{eq:condition12}. Here, we deduce that Equation \eqref{ycondaa} is equivalent to the system of the following two linear equations modulo $J$:
		\begin{align}\label{ycondabreak1}
			(c_1s_{e+1} - c_{e+1}s_1)  + (c_2s_{e+2}-c_{e+2}s_2) + \cdots + (c_es_{2e}-c_{2e}s_e) \equiv  0 \pmod J
		\end{align}
		and
		\begin{align}\label{ycondabreak2}
		z_{j_2}+	\big((c_1t_{e+1}+d_1s_{e+1}) - (c_{e+1}t_1+d_{e+1}s_1)\big)  + & \big((c_2t_{e+2}+d_2s_{e+2}-c_{e+2}t_2+d_{e+2}s_2)\big) + \dots + \notag\\ & \big((c_et_{2e}+d_es_{2e})-(c_{2e}t_e+d_{2e}s_e)\big) \equiv  y \pmod J.
		\end{align}
		Now, we find the number of solutions of Equations \eqref{ycondb}, \eqref{ycondabreak1} and \eqref{ycondabreak2}.
		Since the $2e$-tuple $a$ has an invertible entry, there exists $i \in \{1,2,\ldots,2e\}$ such that $c_i \notin J$. Assume that $i$ belongs to $\{1,2,\ldots,e\}$ (otherwise, the proof is similar). Then Equation (\ref{ycondabreak2}) is equivalent to
		\begin{align}\label{ycondabreak2equiv}
			t_{e+i} \equiv 
			&~c_{i}^{-1}\bigg(y-z_{j_2}+\big((c_{e+1}t_1+d_{e+1}s_1) - (c_1t_{e+1}+d_1s_{e+1})\big) +\\
			&~\big((c_{e+2}t_2+d_{e+2}s_2) - (c_2t_{e+2}+d_2s_{e+2})\big)+ \dots + \notag\\
			&~\big((c_{e+i-1}t_{i-1}+d_{e+i-1}s_{i-1}) - (c_{i-1}t_{e+i-1}+d_{i-1}s_{e+i-1})\big) + \notag\\
			&~ \big((c_{e+i}t_i+d_{e+i}s_i) - d_is_{e+i}\big) + \notag\\
			&~\big((c_{e+i+1}t_{i+1}+d_{e+i+1}s_{i+1}) - (c_{i+1}t_{e+i+1}+d_{i+1}s_{e+i+1})\big) + \dots + \notag\\
			&~\big((c_{2e}t_e+d_{2e}s_e) - (c_et_{2e}+d_es_{2e})\big)\bigg) \pmod J    
		\end{align}
	On the other hand, for every $j \in \{1,2,\ldots,2e\}$, let $h_j = w_jr$, where $w_j$ belongs to $\{z_1,z_2,\ldots,z_q\}$ (this is due to Lemma \ref{lempart1}). Then Equation \eqref{ycondb} yields
		\begin{align}
			\big(w_1r(s_{e+1}+t_{e+1}r)-w_{e+1}r(s_1+t_1r)\big) + 
			& \big(w_2r(s_{e+2}+t_{e+2}r)-w_{e+2}r(s_2+t_2r)\big) + \dots + \notag\\ & \big(w_er(s_{2e}+t_{2e}r)-w_{2e}r(s_e+t_er)\big) = (z-y)r,    
		\end{align}
		which is equivalent to
		\begin{align}\label{ycondbbreak}
			(w_1s_{e+1}-w_{e+1}s_1) + 
			(w_2s_{e+2}-w_{e+2}s_2) + \dots + (w_es_{2e}-w_{2e}s_e) \equiv z-y \pmod J.    
		\end{align}
		Thus, the system of equations we need to solve is equivalent to the system of Equations \eqref{ycondabreak1}, \eqref{ycondabreak2equiv} and \eqref{ycondbbreak}. 
		
		Similar to the proof of Proposition \ref{propmain1}, we find that the system of Equations (\ref{ycondabreak1}) and (\ref{ycondbbreak}) has rank $2e-2$, as follows. Since the $2e$-tuple $a$ has an invertible entry and the $2e$-tuple $h$ is not identically $0$, each of Equations (\ref{ycondabreak1}) and (\ref{ycondbbreak}) is non-trivial. Let us show that Equations (\ref{ycondabreak1}) and (\ref{ycondbbreak}) are linearly independent. Suppose to the contrary they are dependent, that is, there exists $\delta \in \{z_2,\ldots,z_q\}$ such that, for every $j \in \{1,2,\ldots,2e\}$, we have 
		\begin{equation}\label{eqcond2}
			c_j \equiv \delta w_j \pmod J.    
		\end{equation}
		This implies that
		$ra$ belongs to $H$, a contradiction. Thus, there exist $\ell, k$ such that $1 \le \ell < k \le 2e$ holds and the system of Equations (\ref{ycondabreak1}) and (\ref{ycondbbreak}) is equivalent to the system of equations
		\begin{equation}\label{eqcond3}
			s_{\ell} \equiv M_{\ell}(s_1,s_2,\ldots,s_{\ell-1},s_{\ell+1},\ldots,s_{k-1},s_{k+1},\ldots,s_{2e},y,z) \pmod J    
		\end{equation}
		and
		\begin{equation}\label{eqcond4}
			s_{k} \equiv M_k(s_1,s_2\ldots,s_{\ell-1},s_{\ell+1},\ldots,s_{k-1},s_{k+1},\ldots,s_{2e},y,z) \pmod J,    
		\end{equation}
		where $M_{\ell}(s_1,s_2,\ldots,s_{\ell-1},s_{\ell+1},\ldots,s_{k-1},s_{k+1},\ldots,s_{2e},y,z)$ and $M_k(s_1,s_2,\ldots,s_{\ell-1},s_{\ell+1},\ldots,s_{k-1},s_{k+1},\ldots,$\\$s_{2e},y,z)$
		are linear combinations of the $2e$ variables $s_1,s_2,\ldots,s_{\ell-1},s_{\ell+1},\ldots,s_{k-1},s_{k+1},\ldots,s_{2e},y,z$. Thus, the system of Equations (\ref{ycondabreak1}), (\ref{ycondabreak2equiv}) and (\ref{ycondbbreak}) is equivalent to the system of Equations (\ref{ycondabreak2equiv}), (\ref{eqcond3}) and (\ref{eqcond4}).
		Note that once the values of $s_1,s_2,\ldots,s_{2e}$ (and therefore the value of $z_{j_2}$ by Equation \eqref{eqcond5}) and $t_1,t_2,\ldots,t_{e+i-1},t_{e+i+1},\ldots,t_{2e}$ are determined, then the value of $t_{e+i}$ is also determined according to Equation (\ref{ycondabreak2equiv}). 
		
		Now we are ready to count the number of solutions of the system of Equations (\ref{ycondabreak2equiv}), (\ref{eqcond3}) and (\ref{eqcond4}).
		Depending on the values of $y$ and $z$, we have the following two cases:
		\begin{itemize}
			\item Case 1: $y=z$, and
			\item Case 2: $y\neq z$.
		\end{itemize}
		Corresponding to Case 1, the system of Equations \eqref{eqcond3} and \eqref{eqcond4} in the variables $s_1,s_2,\ldots,s_{2e}$ has $q^{2e-2}$ solutions, but the zero solution must be excluded as the $2e$-tuple $x$ has an invertible entry. Thus, $q^{2e-2}-1$ solutions of Equations \eqref{eqcond3} and \eqref{eqcond4} satisfy our requirements. For every such a solution there are $q^{2e-1}$ solutions of Equation \eqref{ycondabreak2equiv} as the variables  $t_1,t_2,\ldots,t_{e+i-1},t_{e+i+1},\ldots,t_{2e}$ are independent. Moreover, since $y,z\in\{z_2,\ldots,z_q\}$, there are $(q-1)$ possibilities of $y=z$. Thus, the total number of solutions of the system of Equations \eqref{ycondabreak2equiv}, \eqref{eqcond3} and \eqref{eqcond4} for Case 1 is
		$$
		(q^{2e-2}-1)q^{2e-1}(q-1).
		$$
		Corresponding to Case 2, the system of Equations \eqref{eqcond3} and \eqref{eqcond4} in the variables $s_1,s_2,\ldots,s_{2e}$ has $q^{2e-2}$ solutions. For every such a solution there are $q^{2e-1}$ solutions of Equation \eqref{ycondabreak2equiv}. Moreover, there are $(q-1)^2-(q-1)=(q-1)(q-2)$ possibilities of $y\neq z$. Thus, the total number of solutions of the system of Equations \eqref{ycondabreak2equiv}, \eqref{eqcond3} and \eqref{eqcond4} for Case 2 is
		$$
		q^{2e-2}q^{2e-1}(q-1)(q-2).
		$$
		So, we have that corresponding to Case 1, the vertices $[a]$ and $[a+h]$ have 
		$$
		\dfrac{(q^{2e-2}-1)q^{2e-1}(q-1)}{q^2-q} = (q^{2e-2}-1)q^{2e-2}
		$$
		common neighbours in ${Y(2e,K)}$, and corresponding to Case 2, the vertices $[a]$ and $[a+h]$ have 
		$$
		\dfrac{q^{2e-2}q^{2e-1}(q-1)(q-2)}{q^2-q} = q^{2(2e-2)}(q-2)
		$$
		common neighbours in ${Y(2e,K)}$. Finally, we combine Cases 1 and 2 and conclude that the vertices $[a]$ and $[a+h]$ have $$(q^{2e-2}-1)q^{2e-2}+q^{2(2e-2)}(q-2)=q^{4e-3}-q^{4e-4}-q^{2e-2}$$ common neighbours in $Y(2e,K)$, and the proof is complete.
	\end{proof}
	\begin{proposition}\label{prop:YTwoVerticesFromDifferentClasses}
		Let $H_1,H_2$ be hyperplanes of $T$, and let $a\in V'$, $b\in V'$ with $ra\notin H_1,rb\notin H_2$, such that $C(H_1,a)$ and $C(H_2,b)$ are distinct classes of the partition $\Pi(2e,K)$. Then $[a]$ and $[b]$ have $q^{2e-3}(q-1)(q^{2e-2}-1)$ common neighbours in $Y(2e,K)$.
	\end{proposition}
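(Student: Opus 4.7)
The plan is to parallel the proof of Proposition \ref{propmain2}, modified by allowing the right-hand sides of both bilinear forms to be arbitrary elements of $J\setminus\{0\}$ rather than $0$. A vertex $[x]$ is a common neighbour of $[a]$ and $[b]$ in $Y(2e,K)$ if and only if there exist $y,z\in\{z_2,\ldots,z_q\}$ such that
\begin{align*}
& (a_1x_{e+1}-a_{e+1}x_1) + \cdots + (a_ex_{2e}-a_{2e}x_e) = yr,\\
& (b_1x_{e+1}-b_{e+1}x_1) + \cdots + (b_ex_{2e}-b_{2e}x_e) = zr.
\end{align*}
I will count the $2e$-tuples $x\in V'$ satisfying this system and divide the total by $q^2-q$. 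Note that substituting $x=a$ or $x=b$ forces one of the left-hand sides to be $0\notin J\setminus\{0\}$, so neither $[a]$ nor $[b]$ is counted and no extra correction is needed when $[a]$ and $[b]$ happen to be adjacent.

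For each fixed pair $(y,z)\in\{z_2,\ldots,z_q\}^2$ I will count the solutions in $K^{2e}$. The coefficient matrix of the system is identical to the one analysed in the proof of Proposition \ref{propmain2}; the crucial observation there, that $[a]$ and $[b]$ lying in distinct classes of $\Pi(2e,K)$ together with Lemma \ref{lem:cosetsOfT} yields indices $i,\gamma\in\{1,\ldots,2e\}$ with $a_i\in K^\times$ and $b_\gamma-a_\gamma a_i^{-1}b_i\in K^\times$, is independent of the right-hand side. Consequently the same row reduction applies verbatim and expresses two specified variables as affine functions of the remaining $2e-2$, producing exactly $q^{4e-4}$ solutions in $K^{2e}$ per pair $(y,z)$, and $(q-1)^2 q^{4e-4}$ solutions in total.

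Next I will subtract the tuples satisfying $x\in T$ (that is, every entry of $x$ lies in $J$), since $V'=K^{2e}\setminus T$. Writing $x_j=t_j r$ with $t_j\in\{z_1,\ldots,z_q\}$ and using $r^2=0$ from Lemma \ref{lemij=0}, each of the two equations collapses to a single $\F_q$-linear equation in $t_1,\ldots,t_{2e}$ whose coefficient vector is the reduction modulo $J$ of $(a_1,\ldots,a_{2e})$ and $(b_1,\ldots,b_{2e})$ respectively, with right-hand side $y$ or $z$. The linear-independence argument used in Proposition \ref{propmain2}, now interpreted modulo $J$, shows that $\bar a$ and $\bar b$ are $\F_q$-linearly independent, so the reduced $\F_q$-system has rank $2$ and admits exactly $q^{2e-2}$ solutions for each $(y,z)$, giving $(q-1)^2 q^{2e-2}$ tuples in $T$. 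Subtracting yields
\[
(q-1)^2 q^{4e-4}-(q-1)^2 q^{2e-2}=(q-1)^2 q^{2e-2}(q^{2e-2}-1)
\]
valid tuples in $V'$, and dividing by $q^2-q=q(q-1)$ gives $q^{2e-3}(q-1)(q^{2e-2}-1)$ common neighbours.

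The main technical point is the verification that the coefficient matrix of the system has rank $2$ both over $K$ and modulo $J$. Both statements reduce to the assertion, already established in the proof of Proposition \ref{propmain2} via Lemma \ref{lem:cosetsOfT}, that $[a]$ and $[b]$ lying in distinct classes of $\Pi(2e,K)$ is equivalent to $\bar a$ and $\bar b$ being $\F_q$-linearly independent in $(K/J)^{2e}$. Once this is in hand, the remaining arithmetic is a routine count, and notably simpler than that in the same-class case of Proposition \ref{prop:YTwoVerticesFromTheSameClass} because no case split on $y=z$ versus $y\ne z$ arises.
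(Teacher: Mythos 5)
Your proof is correct and follows essentially the same route as the paper's: the same system with right-hand sides $yr$ and $zr$, the same row reduction over $K$ justified by the rank argument of Proposition \ref{propmain2} via Lemma \ref{lem:cosetsOfT}, the same exclusion of solutions lying in $T$, and division by $|K^\times|=q^2-q$. Your explicit mod-$J$ rank-two count of the excluded solutions, and the remark that no adjacency correction is needed, merely spell out steps the paper leaves implicit.
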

	\begin{proof}
	The proof goes along similar lines as that of Proposition \ref{propmain2}. Since $[b]$ does not belong to $C(H_1,a)$, we conclude that there exists no  $\sigma \in K^\times$ and $h \in H_1$ such that $b = \sigma(a+h)$ holds. 
		
		Let $[x]$ be a common neighbour of $[a]$ and $[b]$. Then we have a system of the equations
		\begin{equation}\label{eq:condition21y}
			(a_1x_{e+1}-a_{e+1}x_1) +  (a_2x_{e+2}-a_{e+2}x_2) + \dots + (a_ex_{2e}-a_{2e}x_e) = yr 
		\end{equation}
		and
		\begin{equation}\label{eq:condition22y}
			(b_1x_{e+1}-b_{e+1}x_1) +  (b_2x_{e+2}-b_{e+2}x_2) + \dots + (b_ex_{2e}-b_{2e}x_e) = zr
		\end{equation}
		for some $y,z\in\{z_2,\ldots,z_q\}$.
		We consider the ordering of variables $x_{e+1},x_{e+2},\ldots,x_{2e},x_1,\ldots,x_e$ and write down the augmented matrix:
		$$
		\left(
		\begin{array}{cccc|c}
			a_1 & a_2 & \dots & a_{2e}& yr\\
			b_1 & b_2 & \dots & b_{2e}& zr
		\end{array}
		\right)
		$$
		Note that there exist $i \in \{1,2,\ldots,2e\}$ such that $a_i$ is invertible. Then the matrix of coefficients can be written as
		\begin{align*}
		\left(
		\begin{array}{ccccc|c}
			a_1 & \dots & a_i &  \dots & a_{2e}& yr\\
			b_1 & \dots & b_i &  \dots & b_{2e}& zr
		\end{array}
		\right),
		\end{align*}
		which is equivalent to
		\begin{align*}
		\left(
		\begin{array}{ccccc|c}
			a_1a_i^{-1} & \dots & 1 &  \dots & a_{2e}a_i^{-1} & y ra_i^{-1}\\
			b_1 & \dots & b_i & \dots  & b_{2e} & zr
		\end{array}
		\right),
		\end{align*}
		and, consequently,
		\begin{align}\label{matt}
		\left(
		\begin{array}{ccccc|c}
			a_1a_i^{-1} & \dots & 1 & \dots &  a_{2e}a_i^{-1}& yr a_i^{-1}\\
			b_1-a_1a_i^{-1}b_i & \dots & 0 & \dots & b_{2e}-a_{2e}a_i^{-1}b_i &zr-yra_i^{-1}b_i
		\end{array}
		\right).
		\end{align}
		Since the matrix of coefficients in \eqref{matt} equals the matrix in \eqref{mat} with the same conditions on the entries in $a$ and $b$, we conclude (as in the proof of Proposition \ref{propmain1}) that there exists $\gamma\in \{1,2,\ldots,2e\}\setminus\{i\}$ such that $b_{\gamma}-a_{\gamma}a_i^{-1}b_i$ is invertible.		
		Thus there exist $\ell, k \in \{1,2,\ldots,2e\}$ such that $1 \le \ell < k \le 2e$ and the system of Equations (\ref{eq:condition21y}) and (\ref{eq:condition22y}) is equivalent to the system of equations
		\begin{equation}\label{eq:condition25y}
			x_{\ell} = M_{\ell}(x_1,x_2,\ldots,x_{\ell-1},x_{\ell+1},\ldots,x_{k-1},x_{k+1},\ldots,x_{2e},y,z)    
		\end{equation}
		and
		\begin{equation}\label{eq:condition26y}
			x_{k} = M_k(x_1,x_2,\ldots,x_{\ell-1},x_{\ell+1},\ldots,x_{k-1},x_{k+1},\ldots,x_{2e},y,z),    
		\end{equation}
		where $M_{\ell}(x_1,x_2,\ldots,x_{\ell-1},x_{\ell+1},\ldots,x_{k-1},x_{k+1},\ldots,x_{2e},y,z)$ and $M_k(x_1,x_2,\ldots,x_{\ell-1},x_{\ell+1},\ldots,x_{k-1},x_{k+1},$\\$\ldots,x_{2e},y,z)$
		are linear combinations of the $2e$ variables $x_1,x_2,\ldots,x_{\ell-1},x_{\ell+1},\ldots,x_{k-1},x_{k+1},\ldots,x_{2e},y$, and $z$.
		Now we are ready to count the number of solutions of the system of Equations (\ref{eq:condition25y})  and (\ref{eq:condition26y}) in the variables $x_1,x_2,\ldots,x_{2e},y$ and $z$.
		It has $(q^2)^{2e-2}(q-1)^2$ solutions, but the solutions $x$ such that all the entries are zero divisors must be excluded as the $2e$-tuple $x$ has an invertible entry. Thus, $((q^2)^{2e-2}-q^{2e-2})(q-1)^2$ solutions of Equations (\ref{eq:condition25y}) and (\ref{eq:condition26y}) satisfy our requirements, and we conclude that the vertices $[a]$ and $[b]$ have 
		$$
		\dfrac{(q^{2e-2}-1)q^{2e-2} (q-1)^2}{q^2-q}= q^{2e-3}(q-1)(q^{2e-2}-1)
		$$
		common neighbours in ${Y(2e,K)}$, which completes the proof.
	\end{proof}

    Now Theorem \ref{thm:Main1} follows from Proposition \ref{prop1}, Proposition \ref{prop2}, Corollary \ref{cor:Partition}, Corollary \ref{cor:XTwoVerticesFromTheSameClass} and Corollary \ref{cor:XTwoVertices FromDifferentClasses}, and Theorem \ref{thm:Main2} follows from Proposition \ref{prop1}, Proposition \ref{prop3}, Corollary \ref{cor:Partition}, Proposition \ref{prop:YTwoVerticesFromTheSameClass} and Proposition \ref{prop:YTwoVerticesFromDifferentClasses}.

\section{Concluding remarks}\label{sec:Remarks}
In Corollary \ref{cor:Partition}, we showed the existence of a partition into classes $C(H,u)$, which is sufficient to prove Theorem \ref{thm:Main1} and Theorem \ref{thm:Main2}. However, the disadvantage of our results is the the canonical partition is given implicitly. The following problem naturally arises.

\begin{problem}
Let $m$ be the number of classes of the canonical partition of the graphs $X(2e,K)$ and $Y(2e,K)$ from Theorem \ref{thm:Main1} and Theorem \ref{thm:Main2}, which is equal to the number of $2e-1$-dimensional subspaces in a $2e$-dimensional vector space over a finite field $\mathbb{F}_q$.
How can we explicitly choose hyperplanes $H_1,H_2,\ldots,H_m$ and tuples $u_1,u_2,\ldots,u_m$ such that $C(H_1,u_1), C(H_2,u_2), \ldots, C(H_m,u_m)$ is the canonical partition of the graphs $X(2e,K)$ and $Y(2e,K)$?     
\end{problem}
    
\section*{Acknowledgements} \label{Ack}
This work is supported by Natural Science Fundation of Hebei Province (A2023205045).


\begin{thebibliography}{00}
\bibitem{B77}
R.C. Bose, \emph{Symmetric group divisible designs with the dual property}, J. Statist. Plann. Inference 1 (1977) 87--101.
\url{https://doi.org/10.1016/0378-3758(77)90008-8}

\bibitem{BV22}
A. E. Brouwer, H. Van Maldeghem, \emph{Strongly regular graphs}, Cambridge University Press, Cambridge (2022).

\bibitem{CH14} D. Crnkovi\'c, W. H. Haemers, \emph{Walk-regular divisible design graphs}, Designs, Codes and Cryptography, 72 (2014) 165--175.
\url{https://doi.org/10.1007/s10623-013-9861-0}

\bibitem{CS22} D. Crnkovi\'c, A. \v{S}vob, \emph{New constructions of divisible design Cayley graphs}, Graphs and Combinatorics, 38 (2022) Article number 17.
\url{https://doi.org/10.1007/s00373-021-02440-4}

\bibitem{DGHS24}
B. De Bruyn, S. Goryainov, W. Haemers, and L. Shalaginov, \emph{Divisible design graphs from the symplectic graph},  arXiv:2404.09902, 
\url{https://arxiv.org/abs/2404.09902}

\bibitem{DF04}
D. S. Dummit, R. M. Foote, {\it Abstract Algebra}, Third Edition, John Wiley \& Sons, Inc. (2004).

\bibitem{EFHHH99}
M. Erickson, S. Fernando, W.H. Haemers, D. Hardy, J. Hemmeter, \emph{Deza graphs: A generalization of strongly regular graphs}, Journal of Combinatorial Designs, 7 (1999) 395--405.
\url{http://doi.org/10.1002/(SICI)1520-6610(1999)7:6%3C395::AID-JCD1%3E3.0.CO;2-U}

\bibitem{LW08}
F. Li, Y. Wang, \emph{Subconstituents of symplectic graphs}, European Journal of Combinatorics, 29 (2008) 1092--1103.
\url{http://doi.org/10.1016/j.ejc.2007.08.001}

\bibitem{GK24}
A.L. Gavrilyuk, V.V. Kabanov,  \emph{Strongly regular graphs decomposable into a divisible design graph and a Hoffman coclique}, Designs, Codes and Cryptography, 92, 1379--1391 (2024). \url{https://doi.org/10.1007/s10623-023-01348-9}

\bibitem{GHKS19} S. Goryainov, W. H. Haemers, V. V. Kabanov, L. Shalaginov, \emph{Deza graphs with parameters $(n, k, k-1, a)$ and $\beta = 1$}, Journal of Combinatorial Designs, 27 (2019) 188--202.
\url{https://doi.org/10.1002/jcd.21644}

\bibitem{G13} Z. Gu, \emph{Subconstituents of symplectic graphs modulo $p^n$}, Linear Algebra and its Applications, 439 (2013) 1321--1329.
\url{https://doi.org/10.1016/j.laa.2013.04.015}

\bibitem{GWZ13} Z. Gu, Z. Wan, K. Zhou, \emph{Subconstituents of orthogonal graphs of odd characteristic~--~continued}, Linear Algebra and its Applications 439 (2013) 2861--2877.
\url{https://doi.org/10.1016/j.laa.2013.08.010}

\bibitem{GLW10}
J. Guo, F. Li, K. Wang, \emph{Deza graphs based on symplectic spaces}, European Journal of Combinatorics, 31 (2010) 1969--1980.
\url{http://doi.org/10.1016/j.ejc.2010.05.006}

\bibitem{GLW14}
J. Guo, F. Li, K. Wang, \emph{Orthogonal graphs over Galois rings of odd characteristic}, European Journal of Combinatorics, 39 (2014) 113--121.
\url{https://doi.org/10.1016/j.ejc.2014.01.002}

\bibitem{HKM11} W. H. Haemers, H. Kharaghani, M. A. Meulenberg, \emph{Divisible design graphs}, Journal of Combinatorial Theory, Series A, 118 (2011) 978--992.
\url{https://doi.org/10.1016/j.jcta.2010.10.003}

\bibitem{H68}
T. Hungerford, {\it On the structure of principal ideal rings}, Pacific Journal of Mathematics 25 (1968), no. 3, 543--547.

\bibitem{K22} V. V. Kabanov, \emph{New versions of the Wallis-Fon-Der-Flaass construction to create divisible design graphs}, Discrete Mathematics, 345(11) (2022) 113054.
\url{https://doi.org/10.1016/j.disc.2022.113054}

\bibitem{K23} V. V. Kabanov, \emph{A new construction of strongly regular graphs with parameters of the complement symplectic graph}, The Electronic Journal of Combinatorics, 30(1) (2023) \#P1.25.
\url{https://doi.org/10.37236/11343}

\bibitem{KS21} V. V. Kabanov, L. Shalaginov, \emph{On divisible design Cayley graphs}, The Art of Discrete and Applied Mathematics, 4(2) (2021) \#P2.02.
\url{https://doi.org/10.26493/2590-9770.1340.364}

\bibitem{LGW14} F. Li, J. Guo, K. Wang, \emph{Orthogonal graphs over Galois rings of odd characteristic}, European Journal of Combinatorics 39 (2014) 113--121.
\url{https://doi.org/10.1016/j.ejc.2014.01.002}

\bibitem{LWG13} F. Li, K. Wang, J. Guo, \emph{More on symplectic graphs modulo $p^n$}, Linear Algebra and its Applications, 438 (2013) 2651--2660. 
\url{https://doi.org/10.1016/j.laa.2012.06.026}

\bibitem{MP11}
Y. Meemark, T. Prinyasart, {\it On symplectic graphs modulo $p^n$}, Discrete Mathematics 311 (2011), no. 17, 1874--1878.
\url{https://doi.org/10.1016/j.disc.2011.05.005}

\bibitem{MP13} Y. Meemark, T. Puirod, \emph{Symplectic graphs over finite local rings}, European Journal of Combinatorics, 34 (2013) 1114--1124.
\url{https://doi.org/10.1016/j.ejc.2013.03.003}
        

\bibitem{M08} M.A. Meulenberg, \emph{Divisible design graphs}, Master's thesis, Tilburg University (2008).
\url{http://alg.imm.uran.ru/dezagraphs/Divisible_design_graphs_M.A._Meulenberg.pdf}

\bibitem{N18}
A. Nowicki, \emph{Tables of finite commutative local rings of small orders}, 2018, \url{https://www.researchgate.net/profile/Andrzej-Nowicki-2/publication/328319576}

\bibitem{P} D. Panasenko, \emph{Strictly Deza graphs}, Website, \url{http://alg.imm.uran.ru/dezagraphs/main.html}

\bibitem{P22} D. Panasenko, \emph{The vertex connectivity of some classes of divisible design graphs}, Siberian Electronic Mathematical Reports, 19(2) (2022) 426--438.
\url{http://semr.math.nsc.ru/v19/n2/p426-438.pdf}

\bibitem{PS22} D. Panasenko, L. Shalaginov, \emph{Classification of divisible design graphs with at most $39$ vertices}, Journal of Combinatorial Designs, 30(4) (2022) 205--219.
\url{https://doi.org/10.1002/jcd.21818}

\bibitem{S21} L. Shalaginov, \emph{Divisible design graphs with parameters $(4n, n+2, n-2, 2, 4, n)$ and $(4n, 3n-2, 3n-6, 2n-2, 4, n)$}, Siberian Electronic Mathematical Reports, 18(2) (2021) 
1742--1756.
\url{https://doi.org/10.33048/semi.2021.18.134}

\bibitem{T22} L. Tsiovkina, \emph{Covers of complete graphs and related association schemes}, Journal of Combinatorial Theory, Series A, 191 (2022) Article ID 105646.
\url{https://doi.org/10.1016/j.jcta.2022.105646}
\end{thebibliography}
\end{document}